\documentclass[a4paper, 11pt]{article}  
\usepackage{a4wide}
\usepackage{authblk}

\usepackage{amssymb, amsmath, amsfonts, amssymb, mathtools, amsthm}
\usepackage{hyperref}
\usepackage{xcolor}
\usepackage{caption}
\usepackage{subcaption}
\usepackage{enumitem}

\renewcommand{\leq}{\leqslant}
\renewcommand{\geq}{\geqslant}

\newcommand{\R}{\mathbb{R}}
\newcommand{\N}{\mathbb{N}}

\newcommand{\FF}{\mathcal{X}}
\newcommand{\XX}{\mathcal{X}}
\DeclareMathOperator{\Id}{Id}

\newcommand{\dd}{\mathrm{d}}

\newcommand{\YY}{\mathcal{Y}}

\newcommand{\D}{\mathcal{D}}
\newcommand{\LL}{\mathcal{L}}
\newcommand{\OO}{\mathcal{O}}
\newcommand{\II}{\mathcal{I}}

\newcommand{\what}{\hat{w}}

\newcommand{\weps}{\tilde{w}}

\newcommand{\zhat}{\hat{z}}
\newcommand{\zeps}{\tilde{z}}

\newcommand{\dbar}{\mathrm{\textnormal{d}}}

\newcommand{\zref}[1]{z_{#1, \mathrm{ref}}}
\newcommand{\zreff}{z_{\mathrm{ref}}}

\DeclareMathOperator{\Tr}{Tr}

\renewcommand{\epsilon}{\varepsilon}
\newcommand{\eps}{\epsilon}
\newcommand{\gfrak}{\mathfrak{g}}

\newcommand{\KL}{\mathcal{K}\mathcal{L}}
\newcommand{\Kinf}{\mathcal{K}_\infty}
\allowdisplaybreaks

\newtheorem{theorem}{Theorem}[section]
\newtheorem{proposition}[theorem]{Proposition}

\newtheorem{lemma}[theorem]{Lemma}

\theoremstyle{definition}
\newtheorem{definition}{Definition}[section]
\newtheorem{problem}[definition]{Problem}
\newtheorem{remark}[definition]{Remark}

\newtheorem{assumption}[definition]{Assumption}



\DeclareMathOperator{\diag}{diag}
\DeclareMathOperator{\ran}{Ran}
\DeclareMathOperator{\kernel}{Ker}
\renewcommand{\ker}{\kernel}

\usepackage{stackengine}
\newcommand\ubar[1]{\stackunder[1.2pt]{$#1$}{\rule{1.1ex}{.075ex}}}

\title{\LARGE \bf
Adaptive observer and control of spatiotemporal delayed
neural fields
}


\author[1]{Lucas Brivadis}
\author[1]{Antoine Chaillet}
\author[1]{Jean Auriol}
\affil[1]{Université Paris-Saclay, CNRS, CentraleSupélec, Laboratoire des Signaux et Systèmes, 91190, Gif-sur-Yvette, France. Emails:
        {\tt\small lucas.brivadis@centralesupelec.fr, antoine.chaillet@centralesupelec.fr, jean.auriol@centralesupelec.fr}}%

\begin{document}

\maketitle

\begin{abstract}

An adaptive observer is proposed to estimate the synaptic distribution between neurons asymptotically
from the measurement of a part of the neuronal activity and a delayed neural field evolution model.
The convergence of the observer is proved under a persistency of excitation condition.
Then, the observer is used to derive a feedback law ensuring asymptotic stabilization of the neural fields.
Finally, the feedback law is modified to ensure simultaneously practical stabilization of the neural fields and asymptotic convergence of the observer under additional restrictions on the system.
Numerical simulations confirm the relevance of the approach.

\end{abstract}

\textbf{Keywords:} observers, adaptive control, persistence of excitation, neural fields, delayed systems.


\section{Introduction}

Neural fields are nonlinear integro-differential equations used to model the activity of neuronal populations \cite{Bressloff2011,coombes2014neural}. They constitute a continuum approximation of brain structures motivated by the high density of neurons and synapses. Their infinite-dimensional nature allows for accounting for the spatial heterogeneity of the neurons' activity and the complex synaptic interconnection between them. Their delayed version also allows to take into account the non-instantaneous communication between neurons. Yet, unlike numerical models of interconnected neurons, in which every single neuron is represented by a set of differential equations, neural fields remain amenable to mathematical analysis. A vast range of mathematical tools are now available to predict and influence their behavior, including existence of stationary patterns \cite{brivadis:hal-03589737,Faugeras:2009gz} stability analysis \cite{FAYE2010561}, bifurcation analysis \cite{Atay:2004bg,Veltz:2013wm}, and feedback stabilization \cite{DECH16}.

This interesting compromise between biological significance and abstraction explains the wide range of neural fields applications, which cover primary visual cortex \cite{Bertalmio:2021uo, PINOTSIS2014143}, auditory system \cite{Boscain:2021we}, working memory \cite{Laing:2002we}, sensory cortex \cite{Detorakis:2014km}, and deep brain structures involved in Parkinson's disease \cite{DECH16}. 

The refinement of modern technologies (such as multi-electrode arrays or calcium imagining) allows to measure neuronal activity with higher and higher spatial resolution. Using these measurements to estimate the synaptic distribution between neurons would greatly help decipher the internal organization of particular brain structures. Currently, this is mostly addressed by offline algorithms based on kernel reconstruction techniques \cite{alswaihli2018kernel}, although some recent works propose online observers (see \cite{https://doi.org/10.48550/arxiv.2111.02176} for conductance-based models or \cite{brivadis:hal-03660185} for delayed neural fields). 

In turn, estimating this synaptic distribution could be of interest to improving feedback control of neuronal populations. A particularly relevant example is that of deep brain stimulation (DBS), which consists in electrically stimulating deep brain structures of the brain involved in neurological disorders such as Parkinson's disease \cite{Limousin:1998cc}. Several attempts have been made to adapt the delivered stimulation based on real-time recordings of the brain activity \cite{Carron:2013hi}. Among them, it has been shown that a stimulation proportional to the activity of a brain structure called the subthalamic nucleus is enough to disrupt Parkinsonian brain oscillations \cite{DECH16}. Yet, the value of the proportional gain depends crucially on the synaptic strength between the neurons involved: estimating it would thus allow for more respectful stimulation strategies.

In this paper, we thus develop an online strategy to estimate the synaptic distribution of delayed neural fields. This estimation relies on the assumed knowledge of the activation function of the population, the time constants involved, and the propagation delays between neurons,
as well as online measurement of a part of the neuronal activity.
It exploits the theory of adaptive observers for nonlinear systems developed in \cite{BESANCON2000271, BESANCON201715416, https://doi.org/10.48550/arxiv.2112.05497} and allows to reconstruct the unmeasured quantities based on real-time measurements. We then exploit this feature to propose a stabilizing feedback strategy that may be of particular interest to disrupt pathological brain oscillations. This control law estimates the synaptic kernel in real-time and adapts the stimulation accordingly, thus resulting in a dynamic output feedback controller.

The delayed neural fields model is presented in Section \ref{sec:preliminaries} together with an introduction to the necessary mathematical formalism. The synaptic kernel estimation is presented in Section~\ref{sec:obs}, whereas its use for feedback stabilization is presented in Section~\ref{sec:stab}. Numerical simulations to assess the performance of the proposed estimation and stabilization techniques are presented in Section \ref{sec:num}.

\section{Problem statement and mathematical preliminaries}\label{sec:preliminaries}

\subsection{Delayed neural fields}

Given a compact set $\Omega\subset \mathbb R^q$ (where, typically, $q\in\{1,2,3\}$) representing the physical support of a neuronal population, the evolution of the neuronal activity $z(t, r)\in\R^n$ at time $t\in\R_+$ and position $r\in\Omega$ is modeled as the following delayed neural fields \cite{Bressloff2011,coombes2014neural}:
\begin{align}\label{eq:wc}
    \tau(r)\frac{\partial z}{\partial t}(t, r) = &-z(t, r) + u(t, r)\nonumber \\
    &+ \int_{\Omega}w(r, r')S(z(t - d(r, r'), r'))\dd r'.
\end{align}
$n\in\N$ represents the number of considered neuronal
population types; for instance, imagery techniques often allow for discrimination between an excitatory and an inhibitory population, in which case $n=2$. $\tau(r)$ is a positive definite diagonal matrix of size $n\times n$, continuous in $r$, representing the time decay constant of neuronal activity at position $r$. $S:\R^n\to\R^n$ is a nonlinear activation function; it is often taken as a monotone function, possibly bounded (for instance, a sigmoid). $w(r, r')\in\R^{n\times n}$ defines a kernel describing the synaptic strength between locations $r$ and $r'$; its sign indicates whether the considered presynaptic neurons are excitatory or inhibitory, whereas its absolute value represents the strength of the synaptic coupling between them.
$d(r, r')\in [0, \dbar]$, for some $\dbar>0$, represents the synaptic delay between the neurons at positions $r$ and $r'$ that typically mainly results from the finite propagation speed along the axons. Finally, $u(t, r)\in\R^n$ is an input representing either the influence of non-modeled brain structures or an artificial stimulation signal. Neural fields are widely used to model neuronal populations, as reviewed in \cite{coombes2014neural,Bressloff2011}.  We stress that the synaptic kernel $w$ acts outside from the activation function $S$; in the terminology of \cite{Faugeras:2008wx}, \eqref{eq:wc} thus corresponds to a voltage-based model.


We assume that the neuronal population can be decomposed into $z(t, r)=(z_1(t, r), z_2(t, r))\in\R^{n_1}\times \R^{n_2}$ where $z_1$ corresponds to the measured part of the state and $z_2$ to the unmeasured part.
In the case where all the state is measured, we simply write $z = z_1$ and $n_2 = 0$.
Such a decomposition is natural when the two considered populations are physically separated, as it happens in the brain structures involved in Parkinson's disease \cite{DECH16}. It can also be relevant for imagery techniques that discriminate among neuron types within a given population. Accordingly, we define $\tau_i$, $S_{ij}$ $w_{ij}$ and $u_i$ of suitable dimensions for each population $i, j\in\{1,2\}$ so that
\begin{align}\label{eq:wcij}
    &\tau_i(r)\frac{\partial z_i}{\partial t}(t, r) = -z_i(t, r) + u_i(t, r)\nonumber \\
    &+ \sum_{j=1}^2\int_{\Omega}w_{ij}(r, r')S_{ij}(z_j(t-d_{ij}(r, r'), r'))\dd r'
    .
\end{align}


\subsection{Problem statement}

In the present paper, we are interested in the following control and observation problems:

\begin{problem}[Estimation]\label{pb:obs}
From the knowledge of $S_{ij}$, $w_{2j}$, $\tau_i$ and $d_{ij}$ and the online measurement of $u_i(t)$ and $z_1(t)$ for all $i, j\in\{1,2\}$,
estimate online $z_2(t)$, $w_{11}$ and $w_{12}$.
\end{problem}

\begin{problem}[Stabilization]\label{pb:stab}
From the knowledge of $S_{ij}$, $w_{2j}$, $\tau_i$ and $d_{ij}$ for all $i, j\in\{1,2\}$ and the online measurement of $z_1(t)$, find $u_1$ in the form of a dynamic output feedback law that stabilizes $z_1$ and $z_2$ at 
some reference when $u_2=0$.
\end{problem}
As already said, Problem~\ref{pb:obs} is motivated by the advances in imagery and recording technologies and the importance of determining synaptic distribution in the understanding of brain functioning. The assumption that the transmission delays are known is practically meaningful, as these delays are typically proportional to the distance $|r-r'|$ between the considered neurons via the axonal transmission speed, which is typically known a priori. Similarly, the time constants $\tau_i(r)$ are usually directly dependent on the conductance properties of the neurons. The precise knowledge of the activation function $S_{ij}$ is probably more debatable, although recent techniques allow to estimate them based on the underlying neuron type \cite{Carlu2020}.

Problem~\ref{pb:stab} is motivated by the development of deep brain stimulation (DBS) technologies that allow electrically stimulating some areas of the brain whose pathological oscillations are correlated to Parkinson's disease symptoms.
In our context, the neuronal activity measured and actuated by DBS through $u_1$ is denoted by $z_1$, which corresponds to a deep brain region known as the subthalamic nucleus (STN).
We refer to \cite{Detorakis2015} for more details on feedback techniques for DBS.
One hypothesis, defended by \cite{Holgado2010}, is that these pathological oscillations may result from the interaction between STN and a narrow part of the brain, the external globus pallidus (GPe). The neuronal activity in this area is inaccessible to measurements or stimulation in clinical practice, but it is internally stable and corresponds to $z_2$ in our model.




A strategy relying on a high-gain approach answered Problem \ref{pb:stab} in \cite{DECH16}.
The system under consideration was similar, except that the nonlinear activation function was not applied to the delayed neuronal activity but to the resulting synaptic coupling. In \cite{Faugeras:2008wx}, system~\eqref{eq:wc} is referred to as a voltage-based model, while \cite{DECH16} focused on activity-based models.
It is proven in \cite[Proposition 3]{DECH16} that under
a strong dissipativity assumption
(equivalent\footnote{Actually, there is a typo in the condition stated in \cite[Proposition 3]{DECH16}. The mistake is corrected in the proof, making it equivalent to our Assumption \ref{ass:diss}. See also \cite[Theorem 3]{DECHcdc17} for a corrected version of the hypothesis.
}
to our
Assumption \ref{ass:diss} below), for any positive continuous map $\gamma:\Omega\to\R$ and any square-integral reference signal $z_{\mathrm{ref}}:\Omega\to\R^{n_1}$ there exists a positive constant $\alpha^*$ depending on parameters of the system such that, for all $\alpha>\alpha^*$, system~\eqref{eq:wcij} coupled with the output feedback law $u_1(t, r) = -\alpha\gamma(r)(z_1(t, r)-z_{\mathrm{ref}}(r))$ and $u_2(t,r) = 0$ is globally asymptotically stable (and even input-to-state stable) at some equilibrium whose existence is proved in \cite{brivadis:hal-03589737}.
However, one of the drawbacks of this result is that $\alpha^*$ is proportional to the $L^2$-norm of $w_{11}$, which is usually unknown or uncertain.
In practice, this implies a high-gain choice in the controller,
which may lead to large values of $u_1(t, r)$ that are incompatible with the safety constraints imposed by DBS techniques.
On the contrary, our goal in this paper is to propose an adaptive strategy that does not rely on any prior knowledge of the synaptic distributions $w_{11}$ and $w_{12}$, but at the price of more knowledge on other parameters of the system.

In a preliminary work \cite{brivadis:hal-03660185}, we have shown that an observer may be designed in the delay-free case to estimate $z_2(t)$, $w_{11}$ and $w_{12}$, hence to answer Problem~\ref{pb:obs}.
However, this work was done in a framework that does not encompass time-delay systems, and Problem~\ref{pb:stab} was not addressed at all.
With such an observer, a natural dynamic output feedback stabilization strategy would be to choose
$u_1(t, r) = - \alpha (z_1(t, r) - \zref1(r)) + z_1(t, r) - \int_{r'\in\Omega}\what_{11}(t, r, r')S_{11}(z_1(t-d_{11}(r, r'), r'))\dd r' - \int_{r'\in\Omega}\what_{12}(t, r, r')S_{12}(\hat z_2(t-d_{12}(r, r'), r'))\dd r'$
where $\alpha>0$ is a tunable controller gain, 
$\zref1$ is a reference signal,
$\what_{1j}(t)$ denotes the estimation of $w_{1j}$ made by the observer at time $t$ and $\zhat_2(t)$ is the estimation of $z_2(t)$.
Doing so, if the observer has converged to the state, i.e., $\what_{1j} = w_{1j}$ and $\zhat_2 = z_2$, then the remaining dynamics of $z_1$ would be $\tau_1(r)\frac{\partial z_1}{\partial t}(t, r) = -\alpha (z_1(t, r)-\zref1(r))$ so that $z_1$ would tend towards $\zref1$. Assuming adequate contraction properties of the $z_2$ dynamics,
$z_2$ would tend towards some reference $\zref2$ that depends on $\zref1$. In particular, pathological oscillations would vanish in state-state, without any large-gain assumption on the control policy.
This motivates us to investigate Problem~\ref{pb:obs} and use the observer in dynamic output feedback to address Problem~\ref{pb:stab}.

\subsection{Definitions and notations}
Let $q$ be a positive integer, $\Omega$ be an open subset of $\R^q$ and $\FF$ be a Hilbert space endowed with the norm $\|\cdot\|_{\FF}$ and scalar product $\langle\cdot,\cdot\rangle_{\FF}$.
Denote by $L^2(\Omega, (\FF, \|\cdot\|_\FF)):= \{f:\Omega\to\FF \text{ Lebesgue-measurable}\mid \int_\Omega \|f\|_F^2<+\infty\}$
the Hilbert space of $\FF$-valued square integrable functions.
Denote by $W^{1,2}(\Omega, (\FF, \|\cdot\|_\FF)):=\{f\in L^2(\Omega, (\FF, \|\cdot\|_\FF))\mid f'\in L^2(\Omega, (\FF, \|\cdot\|_\FF)\}$
and by $W^{m,2}(\Omega, (\FF, \|\cdot\|_\FF)):=\{f\in W^{m-1,2}(\Omega, (\FF, \|\cdot\|_\FF))\mid f'\in  W^{m-1,2}(\Omega, (\FF, \|\cdot\|_\FF)\}$ for $m>1$ the usual Sobolev spaces.
If $\Omega$ is a compact set, the above definitions hold by replacing $\Omega$ by its interior, and we denote by $\mu(\Omega):=\int_\Omega \dd r$ the Lebesgue measure of $\Omega$.
If $\II$ is an interval of $\R$,
the space of $k$-times continuously differentiable functions from $\II$ to $\FF$ is denoted by $C^k(\II, \FF)$. We endow $C^0(\II, \FF)$ with the norm defined by $\|x\|_{C^0(\II, \FF)}:=\sup_{t\in\II}\|x(t)\|_\FF$ for all $x\in C^0(\II, \FF)$.
If $x\in\FF$, denote by $x^*\in\FF$ its adjoint.
If $\FF$ is a Hilbert space and $\YY$ is a Banach (resp. Hilbert) space, then the Banach (resp. Hilbert) space of linear bounded operators from $\FF$ to $\YY$ is denoted by $\LL(\FF, \YY)$. For any $W\in\LL(\FF, \YY)$, denote by $\ker W$ its kernel and $\ran W$ its range.
The map $\FF\ni x\mapsto \|Wx\|_\YY$ defines a semi-norm on $W$, that is said to be induced by $W$. It is a norm if and only if $W$ is injective.
Set $\LL(\FF):= \LL(\FF, \FF)$.
Denote by $\Id_\FF$ the identity operator over $\FF$.

For any positive integers $n$ and $m$ and any matrix $w\in\R^{n\times m}$, denote by $w^\top$ its transpose, $\Tr(w)$ its trace, $\|w\|$ its norm induced by the Euclidean norm, and $\|w\|_{F}=\sqrt{\Tr(w^\top w)}$ its Frobenius norm. Recall that these norms are equivalent and $\|w\|\leq \|w\|_{F}$. Hence, for any positive integers $n$ and $m$, $L^2(\Omega, (\R^{n\times m}, \|\cdot\|))$ and $L^2(\Omega, (\R^{n\times m}, \|\cdot\|_F))$ are equivalent Hilbert spaces and $\|\cdot\|_{L^2(\Omega, (\R^{n\times m}, \|\cdot\|))} \leq \|\cdot\|_{L^2(\Omega, (\R^{n\times m}, \|\cdot\|_F))}$.

For all $i, j\in\{1,2\}$,
set $\FF_{z_i} = L^2(\Omega, \R^{n_i})$ and $\FF_{w_{ij}} = L^2(\Omega^2, \R^{n_i\times n_j})$, so that $\FF_{z_i}$ (resp. $\FF_{w_{ij}}$) will be used as the state space of $z_i$ (resp. $w_{ij}$).
Set also $\FF_z = \FF_{z_1}\times \FF_{z_2}$ and $\FF_w = \FF_{w_{11}}\times \FF_{w_{12}}$.
By abuse of notations, we write $\|\cdot\|_{(\FF_{w_{ij}}, \|\cdot\|)} := \|\cdot\|_{L^2(\Omega^2, (\R^{{n_i}\times {n_j}}, \|\cdot\|))}$ and $\|\cdot\|_{(\FF_{w_{ij}}, \|\cdot\|_F)} := \|\cdot\|_{L^2(\Omega^2, (\R^{{n_i}\times {n_j}}, \|\cdot\|_F))}$.
For any positive constant $\dbar$ and any Hilbert space $\FF$, if $x\in C^0([-\dbar, +\infty), \FF)$, we denote by $x_t$ the history of $x$ over the latest time interval of length $\dbar$, i.e., $x_t(s) = x(t+s)$ for all $t\geq0$ and all $s\in[-\dbar, 0]$. 
For any $n\in\N$, denote by $\D^n_{++}\subset\R^{n\times n}$ the set of positive diagonal matrices.
For the sake of reading, if $d_{i2}$ does not depend on $r$, i.e. if $d_{i2}(\cdot, r')$ is constant for all $r'\in\Omega$, we simply write $d_{ij}(r') := d_{ij}(r, r')$. For any globally Lipschitz map $S_{ij}:\R^{n_j}\to\R^{n_i}$,
denote by $\ell_{ij}$ its Lipschitz constant and set $\bar S_{ij}:=\sup_{\R^{n_i}}|S_{ij}|$ .

\begin{remark}
The set $\Omega$ is defined as a compact set of $\mathbb R^q$ where $q$ is typically an integer with values 1, 2, or 3, depending on the considered dimension of the neuronal population. For instance, for neuronal phenomena involving a single direction, we can consider $q=1$ \cite{ermentrout1993existence}. For in vitro cultures, brain slices, or to study planar patterns, we can often consider $q=2$ \cite{tamekue2023mathematical}, whereas for in vivo populations, in order to model the volume of a brain structure, we will typically consider $q=3$ \cite{DECHPASE15}. No specific mathematical form is imposed for $\Omega$, as long as it is compact. It can be made of a connected set or be the union of non-overlapping sets to model physically distinct brain structures \cite{DECHPASE15}.
\end{remark}

\begin{remark}\label{rem:mes}
To ease the reading, we have chosen to consider that the integral over $\Omega$ is a Lebesgue integral, i.e., that $\Omega$ is endowed with the Lebesgue measure. However, note that our work remains identical when considering any other measure for which $\Omega$ is measurable. In particular, an interesting case is when $\Omega = \cup_{k=1}^N\{r_k\}$ for some finite family $(r_k)_{1\leq k\leq N}$ in $\R^q$ and the measure is the counting measure. In that case, \eqref{eq:wc} can be rewritten as the usual finite-dimensional Wilson-Cowan equation \cite{wilson1973mathematical}: for all $k \in \{1,\dots,N\}$,
\begin{align}\label{eq:wc_discrete}
    \tau(r_k)&\frac{\partial z}{\partial t}(t, r_k) = -z(t, r_k) + u(t, r_k) \nonumber \\&+\sum_{\ell=1}^Nw(r_k, r_\ell)S(z(t - d(r_k, r_\ell), r_\ell).
\end{align}
This case will be further investigated in Section~\ref{sec:sim}.
\end{remark}

\subsection{Preliminaries on Hilbert-Schmidt operators}

Let $q$, $n$ and $m$ be positive integers and $\Omega$ be an open subset of $\R^q$.
To any map $w\in L^2(\Omega^2, \R^{n\times m})$, one can associates a Hilbert-Schmidt (HS) integral operator $W:L^2(\Omega, \R^{m})\to L^2(\Omega, \R^{n})$ defined by $(Wz)(r) = \int_\Omega w(r, r')z(r')\dd r'$ for all $r\in\Omega$. The map $w$ is said to be the kernel of $W$. Let us recall some basic notions on such operators (see e.g. \cite{Gohberg1990} for more details). The space $\LL_2(L^2(\Omega, \R^{m}), L^2(\Omega, \R^{n}))$ of HS integral operators is a subspace of $\LL(L^2(\Omega, \R^{m}),$ $ L^2(\Omega, \R^{n}))$, and is a Hilbert space when endowed with the scalar product defined by
$$
\langle W_a, W_b\rangle_{\LL_2(L^2(\Omega, \R^{m}), L^2(\Omega, \R^{n}))} := \langle w_a, w_b\rangle_{(L^2(\Omega^2, \R^{n\times m}, \|\cdot\|_F))}
$$
for all $W_a$ and $W_b$ in $\LL_2(L^2(\Omega, \R^{n\times m}))$ with kernels $w_a$ and $w_b$, respectively.
For any Hilbert basis $(e_k)_{k\in\N}$ of $L^2(\Omega, \R^{m})$, we have that
$\|W\|^2_{\LL_2(L^2(\Omega, \R^{m}), L^2(\Omega, \R^{n}))} = \sum_{k\in\N}\|We_k\|^2_{L^2(\Omega, \R^{n})}$
for all $W\in \LL_2(L^2(\Omega, \R^{m}), L^2(\Omega, \R^{n}))$, i.e.,
\begin{equation*}
\|w\|_{L^2(\Omega^2, (\R^{n\times m}, \|\cdot\|_F))}^2 = \sum_{k\in\N} \int_\Omega \Big|\int_\Omega w(r, r')e_k(r')\dd r'\Big|^2\dd r
\end{equation*}
for all $w\in L^2(\Omega^2, \R^{n\times m})$.

Let $p$ be another positive integer.
If $W$ and $P$ are two HS integral operators with kernels $w$ and $\rho$, in $\LL_2(L^2(\Omega, \R^{m}), L^2(\Omega, \R^{n}))$ and $\LL_2(L^2(\Omega, \R^{n}), L^2(\Omega, \R^{p}))$ respectively, the composition $WP$ is also a HS integral operators, in $\LL_2(L^2(\Omega, \R^{m}),$ $L^2(\Omega, \R^{p}))$. Moreover, its kernel is denoted by $w\circ\rho$ and satisfies
$$(w\circ\rho)(r, r') = \int_\Omega w(r, r'')\rho(r'', r')\dd r''$$
for all 
$r, r'\in\Omega$.

If $W\in \LL_2(L^2(\Omega, \R^{m}), L^2(\Omega, \R^{n}))$ has kernel $w$, then its adjoint $W^*$ is also a HS integral operator and its kernel $w^*$ satisfies $w^*(r, r') = w(r', r)^\top$ for all $r, r'\in\Omega$. In particular, $W$ is self-adjoint if and only if $n=m$ and $w(r, r') = w(r', r)^\top$ for all $r, r'\in\Omega$, and $w$ is a positive-definite kernel if and only if so is $W$. In that case, $w$ induces a norm on $L^2(\Omega, \R^n)$, defined by $z\mapsto\|W z\|_{L^2(\Omega, \R^n)}$, that is weaker than or equivalent to the usual norm $\|\cdot \|_{L^2(\Omega, \R^n)}$.

To answer Problem~\ref{pb:obs}, we propose to estimate $w_{1j}$'s in the norm $L^2(\Omega, (\R^{n\times m},$ $\|\cdot\|_F))$, which, by the previous remarks, is equivalent to estimate their associated HS operators.
This operator-based approach has been followed in \cite{brivadis:hal-03660185} to answer Problem~\ref{pb:obs} in the delay-free case. In the present paper, we focus on estimating the kernels rather than their associated operators. From a practical viewpoint, the use of the Frobenius norm corresponds to a coefficientwise estimation of the matrices $w_{1j}(r, r')$.

\subsection{Properties of the system}

Let us recall the well-posedness of the system \eqref{eq:wcij} under consideration (that was proved in \cite[Theorem 3.2.1]{FAYE2010561}), as well as a bounded-input bounded-state (BIBS) property (that we prove below).

\begin{assumption}\label{ass:wp}
The set $\Omega\subset\R^q$ is compact and,
for all $i, j\in\{1, 2\}$,
$\tau_i\in C^0(\Omega, \D^{n_i}_{++})$, 
$u_i\in C^0(\R_+, \FF_{z_i})$, 
$d_{ij}\in C^0(\Omega^2, [0, \dbar])$ for some $\dbar>0$,
$w_{ij}\in \FF_{w_{ij}}$,
and $S_{ij}\in C^0(\R^{n_j}, \R^{n_i})$ is bounded and globally Lipschitz.
\end{assumption}
These assumptions are standard in neural field analysis (see, e.g., \cite{DECH16}). In particular, the boundedness of $S$ reflects the biological limitations of the maximal activity that the population can reach.

\begin{proposition}[Open-loop well-posedness and BIBS]\label{th:bibs}
Suppose that Assumption~\ref{ass:wp} is satisfied.
Then, for any initial condition $(z_{1,0}, z_{2,0})\in C^0([-\dbar, 0], \FF_{z_1})\times C^0([-\dbar, 0], \FF_{z_2})$,
the open-loop system \eqref{eq:wcij} admits a unique corresponding solution $(z_1, z_2)\in C^1([0, +\infty), \FF_{z_1}\times\FF_{z_2})\cap C^0([-\dbar, +\infty), \FF_{z_1}\times\FF_{z_2})$.
Moreover, if $u_i$ is bounded for all $i\in\{1,2\}$,
then all solutions $(z_1, z_2)$ of \eqref{eq:wcij}
are such that $z_i$ and $\frac{\dd z_i}{\dd t}$ are also bounded.
\end{proposition}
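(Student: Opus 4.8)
The plan is to take the existence, uniqueness and regularity of solutions from the cited well-posedness result and to concentrate the argument on the BIBS estimate, which is the part that really must be established here. For the first claim I would cast \eqref{eq:wcij} as an abstract delay differential equation on the Hilbert space $\FF_z=\FF_{z_1}\times\FF_{z_2}$, whose right-hand side is, for each $t$, an affine function of the history $(z_1)_t,(z_2)_t$ composed with the globally Lipschitz maps $S_{ij}$ and the Hilbert--Schmidt kernels $w_{ij}$. Under Assumption~\ref{ass:wp} this right-hand side is globally Lipschitz in the history and continuous in $t$, so \cite[Theorem 3.2.1]{FAYE2010561} applies and yields a unique local solution; since $u_i$ is continuous and $S_{ij}$ is bounded, no finite-time blow-up can occur, which gives global existence on $[0,+\infty)$ together with the stated $C^1\cap C^0$ regularity. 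The very same mechanism (uniform boundedness of $S_{ij}$) is what drives the BIBS estimate below.

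The key observation for BIBS is that the nonlocal, delayed coupling term
\[
g_i(t,r):=\sum_{j=1}^2\int_\Omega w_{ij}(r,r')\,S_{ij}\bigl(z_j(t-d_{ij}(r,r'),r')\bigr)\,\dd r'
\]
is bounded in $\FF_{z_i}$ uniformly in $t$ and independently of the state. Indeed, $|S_{ij}|\le \bar S_{ij}$ everywhere, so the integrand is dominated by $\bar S_{ij}\,\|w_{ij}(r,r')\|$ regardless of the (delayed) value of $z_j$; applying the Cauchy--Schwarz inequality in $r'$ and then integrating in $r$ gives
\[
\|g_i(t)\|_{\FF_{z_i}}\le \sqrt{\mu(\Omega)}\,\sum_{j=1}^2 \bar S_{ij}\,\|w_{ij}\|_{(\FF_{w_{ij}},\,\|\cdot\|)}=:C_i ,
\]
a finite constant independent of both $t$ and the solution. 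This is precisely where the delays become harmless: because $S_{ij}$ is globally bounded, the history enters only through a term that admits an a priori bound.

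It then remains to treat \eqref{eq:wcij} as a stable linear system driven by the uniformly bounded forcing $u_i+g_i$. Since $\tau_i$ is continuous and positive definite and $\Omega$ is compact, there exist $0<\tauinf\le\bar\tau$ with $\tauinf\,\Id\le\tau_i(r)\le\bar\tau\,\Id$, and I would use the weighted Lyapunov functional $V_i(t):=\tfrac12\int_\Omega z_i(t,r)^\top\tau_i(r)z_i(t,r)\,\dd r$, which is equivalent to $\tfrac12\|z_i(t)\|_{\FF_{z_i}}^2$. Differentiating along solutions (legitimate thanks to the $C^1$ regularity) and using $\tau_i\frac{\partial z_i}{\partial t}=-z_i+u_i+g_i$ gives
\[
\dot V_i = -\|z_i\|_{\FF_{z_i}}^2 + \langle z_i,\,u_i+g_i\rangle_{\FF_{z_i}} .
\]
Young's inequality together with $\|g_i(t)\|_{\FF_{z_i}}\le C_i$ and the boundedness of $u_i$ then yields $\dot V_i\le -\tfrac{1}{\bar\tau}V_i+b_i$ for a constant $b_i$, so a standard comparison (Grönwall) argument shows that $V_i$, hence $\|z_i(t)\|_{\FF_{z_i}}$, stays bounded. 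Rewriting the dynamics as $\frac{\partial z_i}{\partial t}=\tau_i^{-1}(-z_i+u_i+g_i)$ and using $\|\tau_i(r)^{-1}\|\le 1/\tauinf$ finally shows that $\frac{\dd z_i}{\dd t}$ is bounded as well.

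The argument is essentially routine once the uniform bound on $g_i$ is in place, so I expect the only delicate points to be bookkeeping rather than conceptual: establishing the uniform integral bound in the presence of the $(r,r')$-dependent delay (handled by the pointwise bound $|S_{ij}|\le\bar S_{ij}$), and cleanly accommodating the spatially varying time constant $\tau_i(r)$ (handled by the $\tau_i$-weighted functional, which removes the otherwise awkward factor $\tau_i^{-1}$ from the energy identity).
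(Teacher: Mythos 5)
Your proposal is correct and takes essentially the same route as the paper: well-posedness is delegated to \cite[Theorem 3.2.1]{FAYE2010561} (noting only that the spatially varying, continuous, positive $\tau_i$ is harmless), and BIBS follows from an energy estimate that exploits the uniform bound $\bar S_{ij}$ on the activation functions together with Cauchy--Schwarz, Young, and Grönwall, with $\frac{\dd z_i}{\dd t}$ bounded directly from the equation. The only differences are presentational: you isolate the state-independent bound on the delayed coupling term $g_i$ before the Lyapunov step and weight the functional by $\tau_i(r)$ itself rather than by the constant $\ubar{\tau}_i$ as the paper does, which if anything handles the spatially varying time constant more cleanly.
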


\begin{proof}\emph{Well-posedness.}
The only difference with \cite[Theorem 3.2.1]{FAYE2010561} is that $\tau$ depends on $r$. However, since $\tau$ is assumed to be continuous and positive, the proof remains identical to the one given in \cite[Theorem 3.2.1]{FAYE2010561}.

\emph{BIBS.}
    For all $i\in\{1,2\}$, let $\ubar{\tau}_i$ be the smallest diagonal entry of $\tau_i(r)$ when $r$ spans $\Omega$
    (which exists since $\tau_i$ is continuous and $\Omega$ is compact).
    For all $t\geq0$, we have by Young's and Cauchy-Schwartz inequalities that
    \begin{align*}
       & \frac{\ubar{\tau}_i}{2}\frac{\dd}{\dd t} \|z_i(t)\|_{\FF_{z_i}}^2
        \leq - \int_\Omega |z_i(t, r)|^2 \dd r
        + \int_\Omega z_i(t, r)^\top u_i(t, r) \dd r
        \\
        &\quad+ \int_{\Omega} z_i(t, r)^\top \sum_{j=1}^2 \int_{\Omega} w_{ij}(r, r') S_{ij}(z_j(t-d_{ij}(r, r'), r')) \dd r'\dd r
        \\
        &\leq - \int_\Omega |z_i(t, r)|^2 \dd r +\frac{1}{4} \int_\Omega |z_i(t, r)|^2 \dd r + \int_\Omega |u_i(t, r)|^2 \dd r
        \\
        &\quad + \frac{1}{4} \int_\Omega |z_i(t, r)|^2 \dd r 
        \\
        &\quad + \sum_{j=1}^2 \int_{\Omega^2} \|w_{ij}(r, r')\|^2 |S_{ij}(z_j(t-d_{ij}(r, r'), r'))|^2 \dd r'\dd r
        \\
        &\leq - \frac{1}{2} \|z_i(t)\|_{\FF_{z_i}}^2 + \|u_i(t)\|_{\FF_{u_i}}^2 + \sum_{j=1}^2\bar S_{ij}^2\|w_{ij}\|^2_{(\FF_{w_{ij}}, \|\cdot\|)}.
    \end{align*}
Hence, if $u_i$ remains bounded, then $z_i$ also remains bounded by Grönwall's inequality.
Moreover,
\begin{align*}
    &\|\tau_i\frac{\partial z_i}{\partial t}(t)\|_{\FF_{z_i}} 
\leq  \|z_i(t)\|_{\FF_{z_i}} + \|u_i(t)\|_{\FF_{z_i}} 
\\
&\quad + \sqrt{\int_\Omega \Bigg|\sum_{j=1}^2\int_{\Omega}w_{ij}(r, r')S_{ij}(z_j(t-d_{ij}(r, r'), r'))\dd r'\Bigg|^2\dd r}
\\
&\leq  \|z_i(t)\|_{\FF_{z_i}} + \|u_i(t)\|_{\FF_{z_i}} + \sum_{j=1}^2\bar S_{ij}\|w_{ij}\|_{(\FF_{w_{ij}}, \|\cdot\|)}.
\end{align*}
Hence $\frac{\dd z_i}{\dd t}$ is also bounded if $u_i$ is bounded.
\end{proof}

In the rest of the paper, we always make the Assumption~\ref{ass:wp}, so that the well-posedness of the system is always guaranteed.

\section{Adaptive observer}\label{sec:obs}

\subsection{Observer design}

In order to design an observer, we first make a dissipativity assumption on the unmeasured part $z_2$ of the state.


\begin{assumption}[Strong dissipativity]\label{ass:diss}
It holds that
$\ell_{22} \|w_{22}\|_{(\FF_{w_{22}}, \|\cdot\|)}<1$.
\end{assumption}
Assumption~\ref{ass:diss} yields that for any pair $(z_2^a, z_2^b)$ of solutions of \eqref{eq:wc} (replacing $\tau$, $w$, $S$ and $d$ by $\tau_{22}$, $w_{22}$, $S_{22}$ and $d_{22}$), the distance $\|z_2^a(t) - z_2^b(t)\|_{\FF_{z_2}}$ is converging towards $0$ as $t$ goes to $+\infty$ (this fact will be proved and explained in Remark~\ref{rem:diss}).
Assumption~\ref{ass:diss} can thus be interpreted as a detectability hypothesis: the unknown part of the state has contracting dynamics with respect to some norm.


We also stress that Assumption~\ref{ass:diss} is commonly used in the stability analysis of neural fields \cite{Faugeras:2008wx} and ensures dissipativity even in the presence of axonal propagation delays \cite{DECHcdc17}.

Inspired by the delay-free case investigated in \cite{brivadis:hal-03660185}, let us consider the following observer:

\begin{equation}\label{eq:obs}
\begin{aligned}
&\begin{aligned}
    \tau_1(r)\frac{\partial \zhat_1}{\partial t}(t, r) =& -\alpha(\zhat_1(t, r)-z_1(t, r)) - z_1(t, r) + u_1(t, r)
    \\
    &
    +\int_{\Omega}\what_{11}(t, r, r')S_{11}(z_1(t-d_{11}(r, r'), r'))\dd r'
    \\
    &
    +\int_{\Omega}\what_{12}(t, r, r')S_{12}(\zhat_2(t-d_{12}(r, r'), r'))\dd r'
\end{aligned}
\\
&\begin{aligned}
    \tau_2(r)\frac{\partial \zhat_2}{\partial t}(t, r) =& -\zhat_2(t, r) + u_2(t, r)
    \\
    &+\int_{\Omega}w_{21}(r, r')S_{21}(z_1(t-d_{21}(r, r'), r'))\dd r'
    \\
    &
    +\int_{\Omega}w_{22}(r, r')S_{22}(\zhat_2(t-d_{22}(r, r'), r'))\dd r'
\end{aligned}
\\
&\begin{aligned}
    \tau_1(r)\frac{\partial \what_{11}}{\partial t}(t, r, r') =& -(\zhat_1(t, r)-z_1(t, r))\\
    & S_{11}(z_1(t-d_{11}(r, r'), r'))^\top
\\
\tau_1(r)\frac{\partial \what_{12}}{\partial t}(t, r, r') =& -(\zhat_1(t, r)-z_1(t, r))\\
&\quad \quad S_{12}(\zhat_2(t-d_{12}(r, r'), r'))^\top
\end{aligned}
\end{aligned}
\end{equation}
where $\alpha>0$ is a tunable observer gain, to be selected later.

Note that $\zhat_2$ has the same dynamics as $z_2$. Hence the dissipativity Assumption~\ref{ass:diss} shall be employed to prove observer convergence. The correction terms are inspired by \cite{BESANCON2000271} that dealt with the finite-dimensional delay-free context.

The well-posedness of the observer system is a direct adaptation of \cite[Theorem 3.2.1]{FAYE2010561}. The main differences are that $\tau_i$'s are space-dependent and $\what_{1i}$ are solutions of a dynamical system.

\begin{proposition}[Observer well-posedness]\label{th:wp}
Suppose that Assumption~\ref{ass:wp} is satisfied.
Then, for any initial condition $(z_{1,0}, \zhat_{1,0}, z_{2,0}, \zhat_{2,0}, \what_{11,0}, \what_{12,0})\in$ $C^0($ $[-\dbar, 0],$ $ \FF_{z_1})^2\times C^0([-\dbar, 0], \FF_{z_2})^2\times\FF_{w_{11}}\times\FF_{w_{12}}$,
the open-loop system \eqref{eq:wcij}-\eqref{eq:obs} admits a unique corresponding solution $(z_1, \hat z_1, z_2, \zhat_2, \what_{11}, \what_{12})\in C^1([0, +\infty), \FF_{z_1}^2\times\FF_{z_2}^2\times\FF_{w_{11}}\times\FF_{w_{12}})\cap C^0([-\dbar, +\infty), \FF_{z_1}^2\times\FF_{z_2}^2\times\FF_{w_{11}}\times\FF_{w_{12}})$.
\end{proposition}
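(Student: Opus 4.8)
The plan is to recast the coupled system \eqref{eq:wcij}--\eqref{eq:obs} as a single abstract retarded functional differential equation (RFDE) on the product Hilbert space $\XX := \FF_{z_1}^2\times\FF_{z_2}^2\times\FF_{w_{11}}\times\FF_{w_{12}}$ and then invoke the standard existence--uniqueness--continuation theory for such equations, exactly in the spirit of \cite[Theorem 3.2.1]{FAYE2010561}. Writing the combined state as $X=(z_1,\zhat_1,z_2,\zhat_2,\what_{11},\what_{12})$ and using that each $\tau_i(\cdot)$ is continuous and positive definite on the compact set $\Omega$, so that the pointwise inverse $r\mapsto\tau_i(r)^{-1}$ defines a bounded multiplication operator on $\FF_{z_i}$, I would multiply every line of \eqref{eq:wcij}--\eqref{eq:obs} by the appropriate $\tau_i^{-1}$ and collect the right-hand sides into a map $\mathcal{G}:\R_+\times C^0([-\dbar,0],\XX)\to\XX$, so that the system reads $\dot X(t)=\mathcal{G}(t,X_t)$, where $X_t$ denotes the history of $X$ over $[-\dbar,0]$. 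The only time-dependence of $\mathcal{G}$ enters through the inputs $u_i(t)$; since $u_i\in C^0(\R_+,\FF_{z_i})$, the map $t\mapsto\mathcal{G}(t,\cdot)$ is continuous.

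The next step is to verify that $\mathcal{G}$ is Lipschitz continuous in its history argument, uniformly on bounded sets and uniformly for $t$ in compact intervals. All the relevant operators are bounded: the integral terms are Hilbert--Schmidt operators associated with fixed or state-dependent kernels, and each $S_{ij}$ is bounded with Lipschitz constant $\ell_{ij}$ by Assumption~\ref{ass:wp}. The genuinely new feature, compared with the fixed-kernel setting of \cite[Theorem 3.2.1]{FAYE2010561}, is that the terms $\int_\Omega\what_{1j}(t,r,r')S_{1j}(\cdot)\dd r'$ are \emph{bilinear} in the pair $(\what_{1j},z)$. Boundedness of $S_{1j}$ makes these terms globally Lipschitz in $\what_{1j}$ (with constant $\bar S_{1j}$), while the Lipschitz property of $S_{1j}$ makes them Lipschitz in the delayed state argument with a constant proportional to $\|\what_{1j}\|$; the latter is controlled only on bounded sets. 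Hence $\mathcal{G}$ is locally --- but not globally --- Lipschitz, and the usual Picard/contraction argument on $C^0$ yields a unique \emph{maximal} solution $X$, of class $C^1$ on $[0,T_{\max})$ and $C^0$ on $[-\dbar,T_{\max})$ for some $T_{\max}\in(0,+\infty]$.

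It remains to show that $T_{\max}=+\infty$ by ruling out finite-time blow-up through an a priori bound. Fix any $T<T_{\max}$; by continuity, $u_1$ and $u_2$ are bounded on $[0,T]$. The $(z_1,z_2)$ subsystem is decoupled from the observer, so the dissipative estimate used in the proof of Proposition~\ref{th:bibs} bounds $z_1,z_2$ on $[0,T]$. The dynamics of $\zhat_2$ mirror those of $z_2$ with a forcing that is bounded on $[0,T]$ (now that $z_1$ is bounded and $S_{21},S_{22}$ are bounded), so the same estimate bounds $\zhat_2$. Finally, I would close the remaining loop between $\zhat_1$ and the kernel estimates with the functional $V:=\tfrac12\big(\ubar{\tau}_1\|\zhat_1\|_{\FF_{z_1}}^2+\|\what_{11}\|_{(\FF_{w_{11}},\|\cdot\|)}^2+\|\what_{12}\|_{(\FF_{w_{12}},\|\cdot\|)}^2\big)$: using boundedness of $S_{ij}$, the bounds already obtained on $z_1,\zhat_2,u_1$, and Young's inequality on the cross terms $\langle\zhat_1-z_1,\int_\Omega\what_{1j}S_{1j}\rangle$, one obtains a differential inequality $\dot V\leq c_1(t)V+c_2(t)$ with $c_1,c_2$ bounded on $[0,T]$, whence $V$ --- and therefore $\|X(t)\|_\XX$ --- is bounded on $[0,T]$ by Grönwall's inequality. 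Since $T<T_{\max}$ was arbitrary, the maximal solution cannot blow up in finite time, so $T_{\max}=+\infty$ and the solution is global. The asserted regularity is then immediate: $t\mapsto X_t$ is continuous into $C^0([-\dbar,0],\XX)$ and $\mathcal{G}$ is continuous, so $\dot X=\mathcal{G}(t,X_t)$ is continuous, i.e. $X\in C^1([0,+\infty),\XX)$.

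I expect the main obstacle to be precisely the loss of global Lipschitzness caused by the bilinear coupling $\what_{1j}\cdot S_{1j}$: unlike the fixed-kernel model of \cite{FAYE2010561}, this forces a two-stage argument (local existence, then a global a priori bound), and the delicate point is the mutual feedback between $\zhat_1$ and the kernel estimates $\what_{1j}$, which the Lyapunov functional $V$ together with Grönwall's inequality is designed to tame.
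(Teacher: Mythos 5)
Your proof is correct, but it follows a genuinely different route from the paper's. The paper leans entirely on the cascade structure: well-posedness of the plant \eqref{eq:wcij} is already settled by Proposition~\ref{th:bibs}, so the observer \eqref{eq:obs} is viewed as a retarded equation in $(\zhat_1,\zhat_{2t},\what_{11},\what_{12})$ alone, with $z_1$, $z_2$, $u_1$, $u_2$ absorbed into the time dependence of the right-hand side $F$; invoking the boundedness of the $S_{ij}$, the paper asserts that $F$ is globally Lipschitz in the state variables and concludes directly with \cite[Lemma 2.1 and Theorem 2.3]{hale2013introduction}, with no maximal-solution or continuation step at all. You instead treat the coupled system as a single locally Lipschitz RFDE, take the maximal solution, and rule out finite-time blow-up with a priori estimates (cascade bounds for $z_1$, $z_2$, $\zhat_2$, then a quadratic functional and Gr\"onwall's inequality for $(\zhat_1,\what_{11},\what_{12})$). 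Your extra work is not wasted: the bilinear couplings you flag, $\int_\Omega\what_{12}(t,r,r')S_{12}(\zhat_2(\cdot))\dd r'$ and $(\zhat_1-z_1)S_{12}(\zhat_2(\cdot))^\top$, each involve \emph{two observer states} and therefore do not disappear when $(z_1,z_2)$ is frozen, so the paper's global-Lipschitz claim is, strictly speaking, only justified after a second cascade reduction that the paper leaves implicit: the $\zhat_2$ equation is decoupled from $(\zhat_1,\what_{11},\what_{12})$ and globally Lipschitz in $\zhat_2$ alone, so it can be solved first, after which the remaining $(\zhat_1,\what_{11},\what_{12})$ system is affine with coefficients bounded on compact time intervals. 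The paper's (so completed) argument buys brevity, since global Lipschitzness makes blow-up a non-issue; yours buys robustness, needing no structural observation beyond local Lipschitzness plus energy bounds, at the price of the Gr\"onwall stage and of invoking the standard continuation theorem for RFDEs --- which is the one mildly informal point in your write-up: when $T_{\max}<+\infty$ your bounds are uniform over $T<T_{\max}$ (since $u_1,u_2$ are bounded on $[0,T_{\max}]$), so the solution remains in a bounded set on which the right-hand side is bounded, and this is what licenses the continuation and the contradiction.
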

\begin{proof}
The proof is based on \cite[Lemma 2.1 and Theorem 2.3]{hale2013introduction}, and follows the lines of \cite[Lemma 3.1.1]{FAYE2010561}.
First, note that \eqref{eq:wcij}-\eqref{eq:obs} is a cascade system where the observer \eqref{eq:obs} is driven by the system's dynamics \eqref{eq:wcij}.
The well-posedness of \eqref{eq:wcij} is guaranteed by Proposition~\ref{th:bibs}. Now, let $(z_1, z_2)$ be a solution of \eqref{eq:wcij} and let us prove the existence and uniqueness of $(\zhat_1, \zhat_2,\what_{11},\what_{12})$ solution of \eqref{eq:obs} starting from the given initial condition.
Let us consider the map $F:\R_+ \times \FF_{z_1}\times C^0([-\dbar, 0], \FF_{z_2})\times\FF_{w_{11}}\times\FF_{w_{12}}
\to
\FF_{z_1}\times  \FF_{z_2}\times\FF_{w_{11}}\times\FF_{w_{12}}
$
such that \eqref{eq:obs} can be rewritten as
$\frac{\dd}{\dd t}(\zhat_{1}, \zhat_2,\what_{11},\what_{12})(t) = F(t, \zhat_{1}, \zhat_{2t},\what_{11},\what_{12})$.
Since $\tau_i$ are continuous and positive, $S_{ij}$ are bounded and $w_{ij}$ are square-integrable over $\Omega^2$, $d_i$ are continuous and $u_i\in C^0(\R_+, \FF_{z_i})$, the map $F$ is well-defined by the same arguments than \cite[Lemma 3.1.1]{FAYE2010561}.
Let us show that $F$ is continuous, and globally Lipschitz with respect to $(\zhat_{1}, \zhat_{1t},\what_{11},\what_{12})$, so that we can conclude with \cite[Lemma 2.1 and Theorem 2.3]{hale2013introduction}.
Define $F_1$ taking values in $\FF_{z_1}$, $F_2$ taking values in $\FF_{z_2}$, $F_3$ taking values in $\FF_{w_{11}}$ and $F_4$ taking values in $\FF_{w_{12}}$ so that $F = (F_i)_{i\in\{1,2,3,4\}}$
From the proof of \cite[Lemma 3.1.1]{FAYE2010561}, $F_1$ and $F_2$ are continuous and globally Lipschitz with respect to the last variables.
From the boundedness of $S$, $F_3$ and $F_4$ are also continuous and globally Lipschitz with respect to the last variables.
This concludes the proof of Proposition~\ref{th:wp}.
\end{proof}
Let us define the estimation error $(\zeps_1, \zeps_2, \weps_{11}, \weps_{12}) = (\zhat_1-z_1, \zhat_2-z_2, \what_{11}-w_{11}, \what_{12}-w_{12})$.
It is ruled by the following dynamical system:
\begin{equation}\label{eq:eps}
\begin{aligned}
&\begin{aligned}
    &\tau_1(r)\frac{\partial \zeps_1}{\partial t}(t, r)
    = -\alpha\zeps_1(t, r)
    +\int_{\Omega}\weps_{11}(t, r, r')S_{11}(z_1(t- d_{11}
    \\
    &
    (r, r'), r'))\dd r'+\int_{\Omega}\what_{12}(t, r, r')S_{12}(\zhat_2(t-d_{12}(r, r'), r'))\dd r'
    \\
    &
    \quad-
    \int_{\Omega}w_{12}(r, r')S_{12}(z_2(t-d_{12}(r, r'), r'))\dd r'
    \\
    &
    = -\alpha\zeps_1(t, r)
    +\int_{\Omega}\weps_{11}(t, r, r')S_{11}(z_1(t-d_{11}(r, r'), r'))\dd r'
    \\
    &
    \quad+\int_{\Omega}\weps_{12}(t, r, r')S_{12}(\zhat_2(t-d_{12}(r, r'), r'))\dd r'
    \\
    &
    \quad+\int_{\Omega}w_{12}(r, r')(S_{12}(\zhat_2(t-d_{12}(r, r'), r'))\nonumber\\
    &\quad -S_{12}(z_2(t-d_{12}(r, r'), r')))\dd r'
    \end{aligned}
\\
&\begin{aligned}
    \tau_2(r)\frac{\partial \zeps_2}{\partial t}(t, r) &= -\zeps_2(t, r)+\int_{\Omega}w_{22}(t, r, r')(S_{22}(\zhat_2(t-
    \\&
    d_{22}(r, r'), r'))-S_{22}(z_2(t-d_{22}(r, r'), r')))\dd r'
\end{aligned}
\\
&
    \tau_1(r)\frac{\partial \weps_{11}}{\partial t}(t, r, r') = -\zeps_1(t, r)S_{11}(z_1(t-d_{11}(r, r'), r'))^\top
\\
&
    \tau_2(r)\frac{\partial \weps_{12}}{\partial t}(t, r, r') = -\zeps_1(t, r)S_{12}(\zhat_2(t-d_{12}(r, r'), r'))^\top
\end{aligned}
\end{equation}

\subsection{Observer convergence}

In what follows, we wish to exhibit sufficient conditions for the convergence of the observer towards the state, meaning the convergence of the estimation error $(\zeps_1, \zeps_2, \weps_{11}, \weps_{12})$ towards $0$. To do so, we introduce a notion of persistence of excitation over infinite-dimensional spaces.

\begin{definition}[Persistence of excitation]\label{ass:pe}
Let $\FF$  be a Hilbert space and $\YY$ be a Banach space.
A continuous signal $g:\R_+\to \FF$ is persistently exciting (PE) with respect to a 
bounded linear operator $P\in\LL(\FF,\YY)$
if there exist positive constants $T$ and $\kappa$ such that
\begin{equation}\label{eq:pe}
    \int_t^{t+T}|\langle g(\tau), x\rangle_\FF|^2\dd\tau \geq \kappa \|Px\|_\YY^2,\quad \forall x\in\FF, \forall t\geq0.
\end{equation}
\end{definition}

\begin{remark}\label{rem:finite}
If $\FF=\YY$ is finite-dimensional and $P$ is a self-adjoint positive-definite operator, then Definition~\eqref{ass:pe} coincides with the usual notion of persistence of excitation since all norms on $\FF$ are equivalent.
However, if $\FF = \YY$ is infinite-dimensional, then there does not exist any PE signal with respect to the identity operator on $\FF$. (Actually, it is a characterization of the infinite dimensionality of $\FF$). Indeed, if $P=\Id_\FF$, then \eqref{eq:pe} at $t=0$ together with the spectral theorem for compact operators implies that $\int_0^{T}g(\tau)g(\tau)^*\dd \tau$ is not a compact operator, which is in contradiction with the fact that the sequence of finite range operators $\sum_{j=0}^Ng(\frac{jT}{N})g(\frac{jT}{N})^*$ converges to it as $N$ goes to infinity.
This is the reason for which we introduce this new PE condition which is feasible even if $\FF$ is infinite-dimensional.
Indeed, $P$ induces a semi-norm on $\FF$ that is weaker than or equivalent to $\|\cdot\|_\FF$.
\end{remark}

\begin{remark}
When $\FF$ is infinite-dimensional, note that there exist signals that are PE with respect to an operator $P$ inducing a norm on $\FF$ (weaker than $\|\cdot\|_\FF$), and not only a semi-norm. For example, consider $\FF = \YY =l^2(\N, \R)$ the Hilbert space of square summable real sequences. The signal $g:\R_+\to \FF$ defined by $g(\tau) = (\frac{\sin(k\tau)}{k^2})_{k\in\N}$ is
PE
with respect to $P:\FF\to\FF$ defined by $P(x_k)_{k\in\N} = \Big(\frac{x_k}{k^2}\Big)_{k\in\N}$ with constants $T=2\pi$ and $\kappa = \pi$ since $\int_0^{2\pi}\sin^2(k\tau)\dd\tau = \pi$ for all $k\in\N$.
\end{remark}

\begin{remark}\label{rem:pe}
If $\FF = L^2(\Omega, \R^n) $ for some positive integer $n$ and if $P$ is a HS integral operator with kernel $\rho$, then $\forall x\in\FF, \forall t\geq0$, equation \eqref{eq:pe} is equivalent to
\begin{equation}\label{eq:peHS}
\int_t^{t+T}|\langle g(\tau), x\rangle_\FF|^2\dd\tau \geq \kappa \int_{\Omega} \Bigg| \int_{\Omega} \rho(r, r')v(r') \dd r'\Bigg|^2\dd r.
\end{equation}
As explained in Remark~\ref{rem:finite}, the role of $\rho$ is to weaken the norm with respect to which $g$ has to be PE. If one changes the Lebesgue measure for the counting measure over a finite set $\Omega$ as suggested in Remark~\ref{rem:mes}, a possible choice of $\rho$ is the Dirac mass: $\rho(r, r') = 1$ if $r=r'$, $0$ otherwise. In that case, $\FF$ is finite-dimensional, and we recover the usual PE notion.
\end{remark}

Now, let us state the main theorem of this section that solves Problem~\ref{pb:obs}.

\begin{theorem}[Observer convergence]\label{th:obs}
Suppose that Assumptions~\ref{ass:wp} and \ref{ass:diss} are satisfied.
Define
$\alpha^* := \frac{\ell_{12}^2 \|w_{12}\|^2_{(\FF_{w_{12}}, \|\cdot\|)}}{2(1-\ell_{22}^2\|w_{22}\|^2_{(\FF_{w_{22}}, \|\cdot\|)})}$.
Then, for all $\alpha>\alpha^*$,
for all $u_1, u_2 \in C^0(\R_+\times \Omega, \R^{n_i})$,
any solution of \eqref{eq:wcij}-\eqref{eq:eps} is such that
$$\lim_{t\to+\infty}\|\zeps_1(t)\|_{\FF_{z_1}}= \lim_{t\to+\infty} \|\zeps_2(t)\|_{\FF_{z_2}}=0$$
and
$\|\weps_{11}(t)\|_{(\FF_{w_{11}}, \|\cdot\|_F)}$
and
$\|\weps_{12}(t)\|_{(\FF_{w_{12}}, \|\cdot\|_F)}$
remain bounded for all $t\geq0$.
\smallskip

Moreover, for any solution of \eqref{eq:wcij}, the corresponding error system \eqref{eq:eps} is uniformly Lyapunov stable at the origin, that is, for all $\eps>0$, there exists $\delta>0$ such that, if $$\|\zeps_{1}(t_0), \zeps_{2t_0}, \weps_{11}(t_0), \weps_{12}(t_0)\|_{\FF_{z_1}\times C^0([-\dbar, 0], \FF_{z_2})\times \FF_{w_{11}}\times \FF_{w_{12}}}\leq \delta$$ for some $t_0\geq0$,
then
$$\|\zeps_{1}(t), \zeps_{2}(t), \weps_{11}(t), \weps_{12}(t)\|_{\FF_{z_1}\times\FF_{z_2}\times \FF_{w_{11}}\times \FF_{w_{12}}}\leq \eps$$ for all $t\geq t_0$.
\smallskip

Furthermore,
if $S_{11}$ and $S_{12}$ are differentiable,
$u_1$ and $u_2$ are bounded\footnote{This assumption is missing in \cite{brivadis:hal-03660185} while it is implicitly used in the proof.} and
if $\rho_1 \in L^2(\Omega^2, \R^{n_1\times n_1})$ and $\rho_2\in L^2(\Omega^2, \R^{n_2\times n_2})$ are self-adjoint positive-definite 
kernels such that the signal $g:t\mapsto((r, r')\mapsto (S_{11}(z_1(t-d_{11}(r, r'), r')), S_{12}(z_2(t-d_{12}(r, r'), r'))))$ is PE  with respect to 
$P\in\LL(L^2(\Omega^2, \R^{n_1+n_2}), L^2(\Omega, \R^{n_1+n_2}))$ defined by
\begin{align*}
    &(P(x_1, x_2))(r) := \Bigg(\int_{\Omega^2} \rho_1(r, r') x_1(r'', r')\dd r''\dd r', \\
    &\int_{\Omega^2} \rho_2(r, r') x_2(r'', r')\dd r''\dd r'\Bigg)
\end{align*}
for all $(x_1, x_2)\in L^2(\Omega^2, \R^{n_1+n_2})$ and all $r\in\Omega$,
then
\begin{align}
   & \lim_{t\to+\infty} \|\weps_{11}(t)\circ \rho_1\|_{(\FF_{w_{11}}, \|\cdot\|_F)}\nonumber \\
    &= \lim_{t\to+\infty} \|\weps_{12}(t)\circ \rho_2\|_{(\FF_{w_{12}}, \|\cdot\|_F)}=0.\label{eq:conv_w}
\end{align}
\end{theorem}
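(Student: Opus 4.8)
The plan is to treat the error system \eqref{eq:eps} with a Lyapunov--Krasovskii functional, extract state convergence and uniform stability, and then convert the persistence of excitation of $g$ into \eqref{eq:conv_w} by a time-freezing argument. A first observation is that the $\zeps_2$-equation in \eqref{eq:eps} depends neither on $\zeps_1$ nor on the parameter errors, and that $\zhat_2-z_2=\zeps_2$; since $|S_{22}(\zhat_2)-S_{22}(z_2)|\leq\ell_{22}|\zeps_2|$, Assumption~\ref{ass:diss} makes this subsystem strictly contracting (this is the contraction announced for Remark~\ref{rem:diss}), so that $\|\zeps_2(t)\|_{\FF_{z_2}}\to 0$. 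For the coupled part I would use
\[
V=\tfrac12\langle\tau_1\zeps_1,\zeps_1\rangle_{\FF_{z_1}}+\tfrac12\langle\tau_2\zeps_2,\zeps_2\rangle_{\FF_{z_2}}+\tfrac12\int_{\Omega^2}\Tr(\weps_{11}^\top\tau_1\weps_{11})\dd r\,\dd r'+\tfrac12\int_{\Omega^2}\Tr(\weps_{12}^\top\tau_1\weps_{12})\dd r\,\dd r',
\]
augmented by delay-dependent Krasovskii integrals absorbing the terms $\|\zeps_2(t-d_{ij})\|^2$. The adaptation laws for $\what_{1j}$ were designed precisely so that the cross terms $\langle\zeps_1,\int_\Omega\weps_{1j}S_{1j}\dd r'\rangle$ cancel against the derivative of the $\weps_{1j}$-blocks (this uses the common weight $\tau_1$ in the $\zeps_1$- and $\weps_{1j}$-equations), leaving $-\alpha\|\zeps_1\|^2-\|\zeps_2\|^2$ plus the coupling $\langle\zeps_1,\int_\Omega w_{12}(S_{12}(\zhat_2)-S_{12}(z_2))\dd r'\rangle$ and the self-coupling of $\zeps_2$. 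Bounding these by Young's and Cauchy--Schwarz's inequalities through $\ell_{12},\ell_{22},\|w_{12}\|,\|w_{22}\|$, the coefficient of $\|\zeps_2\|^2$ stays negative exactly when $\alpha>\alpha^*$, the factor $1-\ell_{22}^2\|w_{22}\|^2>0$ of Assumption~\ref{ass:diss} being what keeps the threshold finite. This gives $\dot V\leq -c(\|\zeps_1\|^2+\|\zeps_2\|^2)$, hence boundedness of $\weps_{11},\weps_{12}$ and $\zeps_1\in L^2$. Since $u_i$ cancels in \eqref{eq:eps}, $\dot\zeps_i$ is bounded as soon as $\zeps_i$ and $\weps_{1j}$ are, so $\|\zeps_i\|^2$ is uniformly continuous and Barbalat's lemma yields $\|\zeps_1\|,\|\zeps_2\|\to 0$. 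Uniform Lyapunov stability follows directly, as $V$ is non-increasing and dominates the norm of $(\zeps_1,\zeps_2,\weps_{11},\weps_{12})$.

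To reach \eqref{eq:conv_w} I would first upgrade $\zeps_1\to 0$ to $\dot\zeps_1\to 0$. Differentiating the $\zeps_1$-equation, $\ddot\zeps_1$ is made of $\dot\zeps_1$, of $\dot\weps_{1j}=-\tau_1^{-1}\zeps_1 S_{1j}^\top$, and of $\frac{\dd}{\dd t}S_{1j}(z_j(t-d_{1j},\cdot))=S_{1j}'(\cdot)\dot z_j(t-d_{1j},\cdot)$; these are bounded because $S_{11},S_{12}$ are now differentiable with bounded (Lipschitz) derivative and because, $u_1,u_2$ being bounded, Proposition~\ref{th:bibs} gives bounded $\dot z_j$. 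Hence $\dot\zeps_1$ is uniformly continuous and Barbalat's lemma gives $\dot\zeps_1\to 0$. Reading the $\zeps_1$-equation as an identity for its right-hand side and using $\zeps_2\to 0$ to kill $\int_\Omega w_{12}(S_{12}(\zhat_2)-S_{12}(z_2))\dd r'$, I obtain that the regressor pairing
\[
\psi(t,r):=\int_\Omega\weps_{11}(t,r,r')S_{11}(z_1(t-d_{11}(r,r'),r'))\dd r'+\int_\Omega\weps_{12}(t,r,r')S_{12}(z_2(t-d_{12}(r,r'),r'))\dd r'
\]
tends to $0$ in $\FF_{z_1}$.

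With $\psi\to 0$ available, I would run the classical ``freeze the parameter over one excitation window'' argument, now infinite-dimensional. Because $\dot\weps_{1j}=-\tau_1^{-1}\zeps_1 S_{1j}^\top\to 0$ uniformly (as $\zeps_1\to 0$, $S$ bounded), the concatenated error $\weps(t)=(\weps_{11}(t),\weps_{12}(t))$ is almost constant over windows of fixed length $T$: $\sup_{\tau\in[t,t+T]}\|\weps(\tau)-\weps(t)\|\to 0$. Freezing $x=\weps(t)$ (row by row, so as to pair the matrix error with the vector-valued $g$) in $\langle g(\tau),x\rangle$ and recognizing $\langle g(\tau),\weps(\tau)\rangle$ as $\psi(\tau)$ up to the vanishing term from $\zhat_2-z_2$, I get $\int_t^{t+T}|\langle g(\tau),\weps(t)\rangle|^2\dd\tau\to 0$. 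The persistence of excitation \eqref{eq:pe} of $g$ with respect to $P$ then forces $\|P\weps(t)\|\to 0$, and unfolding the definition of $P$ together with the HS-composition identity $(w\circ\rho)(r,r')=\int_\Omega w(r,r'')\rho(r'',r')\dd r''$ identifies this quantity with $\|\weps_{11}(t)\circ\rho_1\|_{(\FF_{w_{11}},\|\cdot\|_F)}^2+\|\weps_{12}(t)\circ\rho_2\|_{(\FF_{w_{12}},\|\cdot\|_F)}^2$, which is \eqref{eq:conv_w}.

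The main obstacle is this last step. Making the time-freezing rigorous in infinite dimensions, and---above all---matching the index structure so that testing \eqref{eq:pe} against the frozen error actually reproduces the composed-kernel norms $\|\weps_{1j}\circ\rho_j\|_F$, is delicate: one must track how the spatially varying delays $d_{1j}(r,r')$ enter $g$, how the postsynaptic index $r$ carried by $\weps_{1j}(t,r,\cdot)$ is handled, and how $P$ marginalizes and re-weights these indices, so that $\|P\weps(t)\|$ coincides with the desired norm rather than a weaker average. By contrast, the Young/Cauchy--Schwarz bookkeeping in the Lyapunov step and the two Barbalat arguments are routine.
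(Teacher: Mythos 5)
Your proposal is correct and, for the first two claims (state convergence, boundedness of $\weps_{1j}$, uniform Lyapunov stability), it coincides with the paper's own proof: same weighted blocks $V^z_i$ and $V^w_j$, same Krasovskii integrals absorbing the delayed terms, same cancellation of the cross terms by the adaptation law, same Young/Cauchy--Schwarz trade-off, then Barbalat. (One minor wording slip: after the Young parameters are fixed, $\alpha>\alpha^*$ is what makes the coefficient of $\|\zeps_1\|^2$ negative, namely $c_1=\alpha-\alpha^*$; the coefficient of $\|\zeps_2\|^2$ is handled by Assumption~\ref{ass:diss} alone.) Where you genuinely diverge from the paper is the route to the key intermediate limit \eqref{eq:conv2}, i.e. the vanishing of $\int_0^T\int_\Omega\big|\sum_j\int_\Omega\weps_{1j}(t,r,r')g_j(t+\tau,r,r')\dd r'\big|^2\dd r\dd\tau$. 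You reach it by a second Barbalat argument (bounding $\ddot\zeps_1$ to get $\dot\zeps_1\to0$), reading the instantaneous regressor pairing off the error equation, and then freezing the slowly varying $\weps_{1j}$ over one excitation window (legitimate since $\dot\weps_{1j}=-\tau_1^{-1}\zeps_1S_{1j}^\top\to0$). The paper never differentiates the error equation: it applies Duhamel's formula twice to express $\zeps_1(t+\tau)$ through the frozen $\weps_{1j}(t)$ paired with the \emph{integrated} regressor $\int_0^\tau g_j(t+s)\dd s$ (yielding \eqref{eq:conv1}), and then removes the integration via the Sobolev interpolation inequality $\|h(t,\cdot)\|^2_{W^{1,2}((0,T),\FF_{z_1})}\leq c_3\|h(t,\cdot)\|_{L^2((0,T),\FF_{z_1})}$, which requires a uniform $W^{2,2}$ bound on $h$. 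The hypotheses consumed are identical in both routes: differentiability of $S_{1j}$ and boundedness of $u_j$ (hence of $\dot z_j$, via Proposition~\ref{th:bibs}) enter your bound on $\ddot\zeps_1$ exactly where the paper needs them for $\partial^2_\tau h$. Your argument is the more elementary, classical adaptive-control one; the paper's integral-identity/interpolation argument buys the same conclusion while only manipulating first-order quantities. Finally, the index-matching step you flag as the main obstacle is not a gap: the paper carries it out exactly as you anticipate, testing \eqref{eq:perem} with $x_j(r,r')=\weps_{1j}(t,r,r')^\top e_k(r)$ for a Hilbert basis $(e_k)$ of $\FF_{z_1}$, summing over $k$, and using the HS identities (in particular $\|\weps_{1j}(t)\circ\rho_j\|_{(\FF_{w_{1j}},\|\cdot\|_F)}=\|\rho_j\circ\weps_{1j}(t)^*\|_{L^2(\Omega^2,(\R^{n_j\times n_1},\|\cdot\|_F))}$ and Fubini) to recognize $\sum_j\|\weps_{1j}(t)\circ\rho_j\|^2_{(\FF_{w_{1j}},\|\cdot\|_F)}$ as the lower bound; your window-freezing estimate plugs into that computation verbatim.
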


The proof of Theorem~\ref{th:obs} is postponed to Section~\ref{sec:proof_obs}.

\begin{remark}\label{rem:hg}
    In the case where all the state is measured, i.e., $n_2 = 0$, note that $\alpha^* = 0$. Hence, under the PE assumption on $g$, the convergence of $\what_{11}\circ\rho_1$ towards $w_{11}$ is guaranteed for any positive observer gain $\alpha$. This means that the observer does not rely on any high-gain approach. This fact will be of importance in Section~\ref{sec:stab}, to show that the controller answering Problem~\ref{pb:stab} is not high-gain when the full state is measured, contrary to the approach developed in \cite{DECH16}.
\end{remark}

\begin{remark}\label{rem:blur}
    The obtained estimations of the kernels $w_{11}$ and $w_{12}$ in $(\FF_{w_{11}}, \|\cdot\|_F)$ is blurred by the kernels $\rho_1$ and $\rho_2$. The stronger is the semi-norm induced by $\rho_j$ (which is a norm if and only if $\rho_j$ is positive-definite), the stronger is the PE assumption, and the finer is the estimation of $w_{1j}$. In particular, if the counting measure replaces the Lebesgue measure over a finite set $\Omega$ and $\rho_j$ is a Dirac mass as suggested in Remark~\ref{rem:pe}, then $\weps_{1j}\circ \rho_j = \weps_{1j}$, hence the convergence of $\what_{1j}$ to $w_{1j}$ obtained in Theorem~\ref{th:obs} is in the topology of $L^2(\Omega^2, (\R^{n_i\times n_j}, \|\cdot\|_F))$, i.e., coefficientwise.
\end{remark}

\begin{remark}
The main requirement of Theorem~\ref{th:obs} lies in the persistence of excitation requirement, which is a common hypothesis to ensure convergence of adaptive observers (see, for instance, \cite{BESANCON2000271, FARZA20092292, sastry1990adaptive} in the finite-dimensional context and \cite{DEMETRIOU19965346, 761927} in the infinite-dimensional case).
Roughly speaking, it states that the parameters to be estimated are sufficiently ``excited'' by the system dynamics. However, this assumption is difficult to check in practice since it depends on the trajectories of the system itself.
In Section~\ref{sec:num}, we choose in numerical simulations a persistently exciting input $(u_1, u_2)$ in order to generate persistence of excitation in the signal $(S_{11}(z_1-d_{11}), S_{12}(z_2-d_{12}))$. This strategy seems to be numerically efficient, but the theoretical analysis of the link between the persistence of excitation of $(u_1, u_2)$ and that of $(S_{11}(z_1-d_{11}), S_{12}(z_2-d_{12}))$ remains an open question, not only in the present work but also for general classes of adaptive observers. This issue is further investigated in Section~\ref{sec:sim}, where we look for a feedback law allowing simultaneous kernel estimation and practical stabilization.
Another approach could be to design an observer not relying on PE, inspired by \cite{8786148,https://doi.org/10.48550/arxiv.2112.05497} for example. These methods, however, do not readily extend to the infinite-dimensional delayed context that is considered in the present paper. They could be investigated in future works. Finally, we emphasize that the persistence of excitation assumption is not required for the convergence of $\hat z_2$ (it is only used to make $\hat w_{1j}$ converge, meaning to estimate the synaptic distribution of neurons projecting to Population 1).
\end{remark}

\begin{remark}
According to Definition~\ref{ass:pe}, the PE assumption on $g$ in Theorem~\ref{th:obs} can be rewritten as follows: there exist positive constants $T$ and $\kappa$ such that, for all $(x_1, x_2)\in L^2(\Omega^2, \R^{n_1+n_2})$,
\begin{align}\label{eq:perem}
    &\int_0^T \Bigg|\sum_{j=1}^2 \int_{\Omega^2} g_j(t+\tau, r, r')^Tx_j(r, r') \dd r \dd r'\Bigg|^2 \dd \tau \nonumber 
 \\
&\geq\kappa \sum_{j=1}^2 \int_{\Omega} \Bigg| \int_{\Omega^2} \rho_j(r, r') x_j(r'', r')\dd r'' \dd r'\Bigg|^2\dd r
\end{align}
This characterization will be used in the proof of Theorem~\ref{th:obs}.
\end{remark}

\begin{remark}
The choice of the operator $P$ is a crucial part of Theorem~\ref{th:obs}.
First, its null space is given by $\ker P = \{(x_1, x_2)\in L^2(\Omega^2, \R^{n_1+n_2}) \mid \int_\Omega x_j(r, \cdot)\dd r \in \ker P_j, \forall j\in\{1,2\}\} = \{(x_1, x_2)\in L^2(\Omega^2, \R^{n_1+n_2}) \mid \|\int_\Omega x_j(r, \cdot)\dd r\|_{L^2(\Omega, \R^{n_j})} =0, \forall j\in\{1,2\}\}$ (where $P_j$ denotes the HS integral operator of kernel $\rho_j$), since $\rho_1$ and $\rho_2$ are positive-definite.
Secondly, remark that $P$ can be written as a block-diagonal operator, with two blocks in $\LL(L^2(\Omega, \R^{n_1}))$ and $\LL(L^2(\Omega^2, \R^{n_2}))$, respectively.
Roughly speaking, this means that the PE signal $g$ must excite ``separately'' on its two components so that we are able to distinguish them and to reconstruct separately $w_{11}$ and $w_{12}$.
Finally, note that if $d_{1j}$ does not depend on $r$ (i.e., $d_{1j}(\cdot, r')$ is constant for all $r'\in\Omega$), then neither does $g_j$ (we write $g_j(t, r') := g_j(t, r, r')$ by abuse of notations), and the kernel of $P$ simply means that we do not require to excite the system along $r$. In other words, in that case, \eqref{eq:perem} can be rewritten as
\begin{align*}
    &\int_0^T \Bigg|\sum_{j=1}^2 \int_{\Omega} g_j(t+\tau, r')^T X_j(r') \dd r'\Bigg|^2 \dd \tau \nonumber \\
\geq&
\kappa \sum_{j=1}^2 \int_{\Omega} \Bigg| \int_{\Omega} \rho_j(r, r') X_j(r') \dd r'\Bigg|^2\dd r
\end{align*}
where $X_j$ defined by $X_j(r') = \int_\Omega x_j(r, r')\dd r$ spans $L^2(\Omega, \R^{n_j})$ as $x_j$ spans $L^2(\Omega^2, \R^{n_j})$, which means that $g$ is PE with respect to the HS integral operator having kernel $\diag(\rho_1, \rho_2)$, which is a self-adjoint positive-definite endomorphism of $L^2(\Omega, \R^{n_1+n_2})$.
\end{remark}

\begin{remark}
    One of the drawbacks of the observer \eqref{eq:obs} is that Theorem~\ref{th:obs} does not guarantee input-to-state stability (ISS; see, e.g., \cite{Sontag2008, mironchenko2023input}) of the error system with respect to perturbations of the measured output $z_1$ or model errors.
    This is an important issue in the context of neurosciences since model parameters are often uncertain.
    In particular, the assumption that $S_{ij}$ and $d_{ij}$ are known is based on models that may vary with time and with individuals.
    From a mathematical point of view, it is due to the fact that the Lyapunov function $V$ (see \eqref{eq:lyap}) used to investigate the system's stability cannot easily be shaped into a control Lyapunov function. Numerical experiments are performed in Section~\ref{sec:num} to investigate robustness to measurement noise.
    From a theoretical viewpoint, in order to obtain additional robustness properties, new observers should be investigated in order to obtain global exponential contraction of the error system, for example, inspired by \cite{https://doi.org/10.48550/arxiv.2111.02176}.
    In any case, the Lyapunov analysis performed in Section~\ref{step1} is still a bottleneck for proving the convergence of observers of this kind.
\end{remark}

\begin{remark}
    Note that, in Theorem 3.2 as in all the results of this paper, the maximal delay $\dbar$ can be arbitrarily large.
    The only assumption made on delays is that $d_{ij}$ are known.
\end{remark}

\subsection{Proof of Theorem~\ref{th:obs} (observer convergence)}
\label{sec:proof_obs}


\subsubsection{Step 1: \textnormal{Proof that $\lim_{t\to+\infty}\|\zeps_1(t)\|_{\FF_{z_1}}= \lim_{t\to+\infty} \|\zeps_2(t)\|_{\FF_{z_2}}=0$}}\label{step1}


In order to obtain the first part of the result, we seek a Lyapunov functional $V$ for the estimation error dynamics.
Inspired by the analysis performed in \cite{DECH16}, let us consider the following candidate Lyapunov function:
\begin{align}
    V(\zeps_{1}, \zeps_{2t}, \weps_{11}, \weps_{12}) :=& V^z_1(\zeps_{1}) + V^z_2(\zeps_{2}) + V^w_1(\weps_{11})  \nonumber \\
    &+V^w_2(\weps_{12})+ W_{1}(\zeps_{2t})+W_{2}(\zeps_{2t})\label{eq:lyap}
\end{align}
where, for all 
$(\zeps_{1}, \zeps_{2t}, \weps_{11}, \weps_{12})\in \FF_{z_1}\times C^0([-\dbar, 0], \FF_{z_2})\times\FF_{w_{11}}\times\FF_{w_{12}}$,
\begin{align}
    V_i^z(\zeps_i) &:= \frac{1}{2}\int_\Omega  \zeps_i(r)^\top\tau_i(r)\zeps_i(r)\dd r,\label{eq:Viz}\\
    V^w_j(\weps_{1j})
    &:= \frac{1}{2} \int_{\Omega^2} \Tr(\weps_{1j}(r, r')^\top\tau_j(r)\weps_{1j}(r, r'))\dd r'\dd r, \label{eq:Vjw}\\
    W_{i}(\zeps_{2t}) &:=
        \int_{\Omega^2}\gamma_i(r) \int_{-d_{i2}(r, r')}^0 |\zeps_{2t}(s, r')|^2\dd s\dd r'\dd r, \label{eq:Wi}
\end{align}
and $\gamma_i\in \LL^2(\Omega, \R)$, for $i\in\{1, 2\}$, are to be chosen later.

Computing the time derivative of these functions along solutions of \eqref{eq:eps}, we get:
\begin{align*}
    &\frac{\dd}{\dd t}V^z_1(\zeps_{1}(t))
    = -\alpha\int_\Omega |\zeps_1(t, r)|^2\dd r
    \\
    &\quad
    +\int_{\Omega^2} \zeps_1(t, r)^\top\Big(
    \weps_{11}(t, r, r')(S_{11}(z_1(t-d_{11}(r, r'), r')))
    \\
    &\quad
    +\weps_{12}(t, r, r')(S_{12}(\zhat_2(t-d_{12}(r, r'), r')))
    \\
    &\quad
    +w_{12}(t, r, r')(S_{12}(\zhat_2(t-d_{12}(r, r'), r'))\nonumber \\
    &-S_{12}(z_2(t-d_{12}(r, r'), r')))
    \Big)\dd r'\dd r,
\end{align*}
\begin{align*}
    &\frac{\dd}{\dd t}V^z_2(\zeps_{2}(t))
    = -\int_\Omega |\zeps_2(t, r)|^2\dd r
    \\
    &\quad
    +\int_{\Omega^2} \zeps_2(t, r)^\top\Big(
    w_{22}(t, r, r')(S_{22}(\zhat_2(t-d_{22}(r, r'), r'))
    \\
    &\quad-S_{22}(z_2(t-d_{22}(r, r'), r')))
    \Big)\dd r'\dd r,
\end{align*}
\begin{align*}
   & \frac{\dd}{\dd t}(V^w_1(\weps_{11}(t)) +V^w_2(\weps_{12}(t)))
    \\
    &= - \int_{\Omega^2}\Tr(\weps_{11}(t, r, r')^\top \zeps_1(t, r)S_{11}(z_1(t-d_{11}(r, r'), r'))^\top)\dd r'\dd r
    \\
    &\quad-\int_{\Omega^2}\Tr(\weps_{12}(t, r, r')^\top \zeps_1(t, r)S_{12}(\zhat_2(t-d_{12}(r, r'), r'))^\top)\dd r'\dd r
    \\
    &=-\int_{\Omega^2}\zeps_1(t, r)^\top\Big(\weps_{11}(r, r')S_{11}(z_1(t-d_{11}(r, r'), r'))
    \\
    &\qquad+\weps_{12}(r, r')S_{12}(\zhat_2(t-d_{12}(r, r'), r'))\Big)\dd r'\dd r,
\end{align*}
and
\begin{equation*}
\begin{aligned}
    \frac{\dd}{\dd t}W_{i}(\zeps_{2t}) = 
        \int_{\Omega^2} \gamma_i(r)(|\zeps_2(t, r')|^2 - |\zeps_2(t- d_{i2}(r, r'), r')|^2)\dd r'\dd r.
\end{aligned}
\end{equation*}
Combining the previous computations, we obtain that
\begin{align*}
    &\frac{\dd}{\dd t}V(\zeps_{1}(t), \zeps_{2t}, \weps_{11}(t), \weps_{12}(t))
    = -\alpha\|\zeps_1(t)\|^2_{\FF_{z_1}}
    \\
    &-\Big(1-\sum_{i=1}^2\int_\Omega \gamma_i(r)\dd r\Big)\|\zeps_2(t)\|^2_{\FF_{z_2}}+\sum_{i=1}^2N_i(\zeps_i(t), \zhat_{2t}, z_{2t})
    \\
    &\quad - \sum_{i=1}^2\int_{\Omega^2}\gamma_i(r) |\zeps_2(t- d_{i2}(r, r'), r')|^2\dd r'\dd r
\end{align*}
where
\begin{align*}
N_i(\zeps_i(t), \zhat_{2t}, z_{2t}) := &\int_{\Omega^2} \zeps_i(t, r)^\top\Big(
    w_{i2}(r, r')(S_{i2}(\zhat_2(t-d_{i2}(r, r'),\\
    & r'))-S_{i2}(z_2(t-d_{i2}(r, r'), r')))
    \Big)\dd r'\dd r.
\end{align*}

Let us provide a bound of $N_i$ by applying Cauchy-Schwartz and Young's inequalities.
\begin{align*}
    &|N_i(\zeps_i(t), \zhat_{2t}, z_{2t})|\\
    &=\Bigg|\int_{\Omega^2} \zeps_i(t, r)^\top
    w_{i2}(r, r')\Big(S_{i2}(\zhat_2(t-d_{i2}(r, r'), r'))\\
    &\hspace{2cm}-S_{i2}(z_2(t-d_{i2}(r, r'), r'))\Big)
    \dd r'\dd r\Bigg|
    \\
    &
    \leq \int_\Omega \Big|\zeps_i(t, r)\Big|
    \Bigg|\int_\Omega
    w_{i2}(r, r')
    \Big(S_{i2}(\zhat_2(t-d_{i2}(r,r'), r')
    \\&\hspace{2cm}-S_{i2}(z_2(t-d_{i2}(r,r'), r'))\Big)
    \dd r'\Bigg|
    \dd r
    \\
    &
    \leq \sqrt{\int_\Omega \Big|\zeps_i(t, r)\Big|^2\dd r}
    \Big(\int_\Omega \Bigg|\int_\Omega
    w_{i2}(r, r')
    \Big(S_{i2}(\zhat_2(t-d_{i2}(r,r'), r')\\
    &\hspace{2cm}
    -S_{i2}(z_2(t-d_{i2}(r,r'), r'))\Big)
    \dd r'\Bigg|^2 \dd r\Big)^{\frac{1}{2}}
    \\
    &
    \leq \frac{1}{2\eps_i}\|\zeps_i(t)\|^2_{\FF_{z_i}}
    + \frac{\eps_i}{2} \int_\Omega \Bigg|\int_\Omega
    \|w_{i2}(r, r')\|
    \Big|S_{i2}(\zhat_2(t-\\
    &d_{i2}(r,r'), r')
    -S_{i2}(z_2(t-d_{i2}(r,r'), r'))\Big|
    \dd r'\Bigg|^2 \dd r
    \\
    &
    \leq \frac{1}{2\eps_i}\|\zeps_i(t)\|^2_{\FF_{z_i}}
    + \frac{\eps_i}{2} \int_\Omega \int_\Omega
    \|w_{i2}(r, r')\|^2\dd r'
    \int_\Omega
    \Big|S_{i2}(\zhat_2\\
    &(t-d_{i2}(r,r'), r')
    -S_{i2}(z_2(t-d_{i2}(r,r'), r'))\Big|^2\dd r'
    \dd r
    \\
    &
    \leq \frac{1}{2\eps_i}\|\zeps_i(t)\|^2_{\FF_{z_i}}
    + \frac{\eps_i\ell_{i2}^2}{2} \int_{\Omega} \int_{\Omega} \|w_{i2}(r, r')\|^2\dd r' \int_{\Omega}|\zeps_2(t-\\
    &d_{i2}(r, r'), r')|^2\dd r'\dd r
\end{align*}
for all $\eps_{i}>0$.
Now, set $\gamma_i(r) := \frac{\eps_i\ell_{i2}^2}{2} \int_{\Omega} \|w_{i2}(r, r')\|^2\dd r'$ for all $i\in\{1,2\}$.
We finally get that
\begin{equation}
\begin{aligned}
    &\frac{\dd}{\dd t}V(\zeps_{1}(t), \zeps_{2t}, \weps_{11}(t), \weps_{12}(t))
    \leq -\left(\alpha- \frac{1}{2\eps_1}\right)\|\zeps_1(t)\|^2_{\FF_{z_1}}
    \\
    &\quad-\left(1- \sum_{i=1}^2\int_\Omega \gamma_i(r)\dd r - \frac{1}{2\eps_2}\right)\|\zeps_2(t)\|^2_{\FF_{z_2}}.
\end{aligned}
\end{equation}
In order to make $V$ a Lyapunov function,
it remains to choose the constants $\eps_i$.
By definition of $\gamma_i$, we have $\sum_{i=1}^2\int_\Omega \gamma_i(r)\dd r = \frac{1}{2}\sum_{i=1}^2 \eps_i\ell_{i2}^2\|w_{i2}\|_{(\FF_{w_{i2}}, \|\cdot\|)}^2$.
Recall that, by Assumption~\ref{ass:diss} and by definition of $\alpha^*$,
$\ell_{22}^2\|w_{22}\|_{(\FF_{w_{22}}, \|\cdot\|)}^2<1$ and 
$\alpha>\frac{\ell_{12}^2\|w_{12}\|_{(\FF_{w_{12}}, \|\cdot\|)}^2}{2(1 - \ell_{22}^2\|w_{22}\|_{(\FF_{w_{22}}, \|\cdot\|)}^2)}$. Pick
\begin{align*}
    &\eps_1 := \frac{1 - \ell_{22}^2\|w_{22}\|_{(\FF_{w_{22}}, \|\cdot\|)}^2}{\ell_{12}^2\|w_{12}\|_{(\FF_{w_{12}}, \|\cdot\|)}^2}\\
    \quad \text{and}\quad
    &\eps_2 := \frac{1 + \ell_{22}^2\|w_{22}\|_{(\FF_{w_{22}}, \|\cdot\|)}^2}{2\ell_{22}^2\|w_{22}\|_{(\FF_{w_{22}}, \|\cdot\|)}^2}.
\end{align*}
Then, there exist two positive constants $c_1$ and $c_2$,
given by $c_1 = \alpha-\alpha^*$ and $c_2 = \frac{1}{4}(1-\ell_{22}^2\|w_{22}\|_{(\FF_{w_{22}}, \|\cdot\|)}^2)$,
such that for any solution of \eqref{eq:eps},
\begin{equation}\label{eq:dV}
\begin{aligned}
    \frac{\dd}{\dd t}V(\zeps_{1}(t), \zeps_{2t}, \weps_{11}(t), \weps_{12}(t))
    &\leq -c_1\|\zeps_1(t)\|^2_{\FF_{z_1}}\\
    &\quad -c_2\|\zeps_2(t)\|^2_{\FF_{z_2}}.
\end{aligned}
\end{equation}
Since $\tau_i\in C^0(\Omega, \D^{n_i}_{++})$, $V_i^z$ and $V^w_j$ define norms that are equivalent to the norms of $\FF_{z_i}$ and $(\FF_{w_{1j}}, \|\cdot\|_F)$, respectively.
Hence, the error system is uniformly Lyapunov stable, since $V$ is non-increasing.
Moreover, 
$\zeps_i$,
and
$\weps_{1j}$
are bounded for $i, j\in\{1,2\}$.
Moreover, we have for all $t\geq0$,
\begin{align*}
    &\frac{\dd}{\dd t} V_1^z(\zeps_1(t)) 
    \leq -\alpha \|\zeps_1(t)\|^2_{\FF_{z_1}}
    \\
    &\quad + \int_{\Omega} |\zeps_1(t, r)|\Big| \int_{\Omega}
    \weps_{11}(t, r, r')(S_{11}(z_1(t-d_{11}(r, r'), r')))
    \\
    &\quad
    +\weps_{12}(t, r, r')(S_{12}(\zhat_2(t-d_{12}(r, r'), r')))
    \\
    &\quad
    +w_{12}(t, r, r')(S_{12}(\zhat_2(t-d_{12}(r, r'), r'))\\
    &\quad-S_{12}(z_2(t-d_{12}(r, r'), r')))
    \dd r'\Big|\dd r,
    \\
    &\leq -\alpha \|\zeps_1(t)\|^2_{\FF_{z_1}} + \frac{1}{2}\|\zeps_1(t)\|^2_{\FF_{z_1}}
    +  \frac{1}{2}\bar S_{11}^2\|\weps_{11}\|_{(\FF_{w_{11}}, \|\cdot\|)}^2
    \\
    &\quad+  \frac{1}{2}\bar S_{12}^2\|\weps_{12}\|_{(\FF_{w_{12}}, \|\cdot\|)}^2
    +  2\bar S_{12}^2\|w_{12}\|_{(\FF_{w_{12}}, \|\cdot\|)}^2
\end{align*}
and
\begin{align*}
    &\frac{\dd}{\dd t} V_2^z(\zeps_2) \leq - \|\zeps_2(t)\|^2_{\FF_{z_2}}
    + \int_{\Omega} |\zeps_2(t, r)| \Big| \int_{\Omega}
    w_{22}(t, r, r')
    \\
    &(S_{22}(\zhat_2(t-d_{22}(r, r'), r'))-S_{22}(z_2(t-d_{22}(r, r'), r'))) \dd r'\Big | \dd r,
    \\
    &\leq  - \|\zeps_2(t)\|^2_{\FF_{z_2}} + \frac{1}{2} \|\zeps_2(t)\|^2_{\FF_{z_2}}
    + 2\bar S_{22}^2\|w_{22}\|_{(\FF_{w_{22}}, \|\cdot\|)}^2.
\end{align*}
Hence $\frac{\dd}{\dd t} V_1^z(\zeps_1)$ and $\frac{\dd}{\dd t} V_2^z(\zeps_2)$ are bounded since
$\zeps_i$,
and
$\weps_{1j}$
are bounded for all $i, j\in\{1,2\}$.
Thus, according to Barbalat's lemma applied to $V_i^z(\zeps_i)$, $V_i^z(\zeps_i(t))\to0$ as $t\to+\infty$, hence
$\|\zeps_i(t)\|_{\FF_{z_i}}\to0$.

\begin{remark}\label{rem:diss}
    At this stage, note that $\frac{\dd}{\dd t}(V_2^z(\zeps_2) + W_2(\zeps_{2t}))\leq -c_2\|\zeps_2\|^2_{\FF_{z_2}}$ whenever $\gamma_2$ and $\eps_2$ are chosen as above. Hence, Assumption~\ref{ass:diss} implies that $\zeps_2$ is converging towards 0. This justifies that Assumption~\ref{ass:diss} is indeed a dissipativity assumption of the $z_2$ sub-system, i.e. a detectability assumption, since $\zhat_2$ has the same dynamics than $z_2$.
\end{remark}

\subsubsection{Step 2: \textnormal{Proof that $\lim_{t\to+\infty} \|\weps_{11}(t)P_X\|_{\FF_{w_{11}}}= \lim_{t\to+\infty} \|\weps_{12}(t)P_Y\|_{\FF_{w_{12}}}=0$}}


Now, assume that $t\mapsto(S_{11}(z_1(t-d_{11})), S_{12}(z_2(t-d_{12})))$ is PE with respect to $\rho$. The error dynamics \eqref{eq:eps} can be rewritten as
\begin{align*}
    \tau_1(r)\frac{\partial \zeps_1}{\partial t}(t, r) &= f_0(t, r) +\int_{\Omega}\weps_{11}(t, r, r')g_1(t, r, r')\dd r'\\
    &
    + \int_{\Omega}\weps_{12}(t, r, r')g_2(t, r, r')\dd r'
    \\
    \tau_1(r)\frac{\partial \weps_{11}}{\partial t}(t, r, r') &= f_1(t, r, r')
    \\
    \tau_2(r)\frac{\partial \weps_{12}}{\partial t}(t, r, r') &= f_2(t, r, r')
\end{align*}
where
$$
g_j(t, r, r') := S_{1j}(z_j(t-d_{1j}(r, r'), r')), \quad \forall j\in\{1,2\},
$$
\begin{align*}
&\|f_0(t)\|^2_{\FF_{z_1}}
:= \int_\Omega \Bigg|-\alpha \zeps_1(t, r) 
+ \int_\Omega \what_{12}(t, r, r')(S_{12}(\zhat_2(t-\\
&\qquad \qquad d_{12}(r, r'), r')) - S_{12}(z_2(t-d_{12}(r, r'), r')))\dd r'
\Bigg|^2\dd r
\\
&\leq 2\alpha^2\|\zeps_1(t)\|^2_{\FF_{z_1}}
+ 2\ell_{12}^2 \int_\Omega \int_\Omega \|\what_{12}(t, r, r')\|^2\dd r' \int_\Omega \|\zeps_{2}(t \\
&\qquad \qquad \qquad - d_{12}(r, r'), r')\|^2\dd r'\dd r
\\
&\leq 2\alpha^2\|\zeps_1(t)\|^2_{\FF_{z_1}}
+ 2\ell_{12}^2 \|\hat w_{12}(t)\|_{(\FF_{w_{12}}, \|\cdot\|)}^2
\sup_{s\in[-\dbar, 0]}\|\zeps_{2t}(s)\|^2_{\FF_{z_2}},
\end{align*}
\begin{align*}
\|f_1(t)\|_{(\FF_{w_{11}}, \|\cdot\|)}
&:= \int_{\Omega^2} |\zeps_1(t, r) S_{11}(z_1(t-d_{11}(r, r'), r'))|^2
\\
&\leq \bar S_{11}^2\mu(\Omega) \|\zeps_1(t)\|_{\FF_{z_1}}^2,
\end{align*}
and
\begin{align*}
\|f_2(t)\|_{(\FF_{w_{12}}, \|\cdot\|)}
&:= \int_{\Omega^2} |\zeps_1(t, r) S_{12}(\zhat_2(t-d_{12}(r, r'), r'))|^2
\\
&\leq \bar S_{12}^2\mu(\Omega) \|\zeps_1(t)\|_{\FF_{z_1}}^2.
\end{align*}
Recall that, according to \hyperlink{step1}{Step 1}, $\|\zeps_i(t)\|_{\FF_{z_i}}\to0$ for all $i\in\{1,2\}$,
and $\|\hat w_{12}(t)\|_{(\FF_{w_{12}}, \|\cdot\|)}\leq \|\weps_{12}(t)\|_{(\FF_{w_{12}}, \|\cdot\|)} + \|w_{12}(t)\|_{(\FF_{w_{12}}, \|\cdot\|)}$ remains bounded
as $t\to+\infty$.
Hence
$\|f_0(t)\|_{(\FF_{z_{1}}, \|\cdot\|)}\to0$ and
$\|f_j(t)\|_{(\FF_{w_{1j}}, \|\cdot\|)}\to0$
as $t\to+\infty$ for all $j\in\{1,2\}$.
Moreover, $t\mapsto (g_1(t),g_2(t))$ is PE with respect to $\rho$ by assumption.

Applying twice Duhamel's formula
(once on $\zeps_1$, then once on $\weps_{1j}$)
, we get that for all $t, \tau\geq0$,
\begin{align*}
&\tau_1(r)\zeps_1(t+\tau, r)
= \tau_1(r)\zeps_1(t, r) + \int_0^\tau f_0(t+s, r)\dd s\\
&+\sum_{j=1}^2\int_{\Omega}\int_0^\tau \weps_{1j}(t+s, r, r')g_j(t+s, r, r')\dd s\dd r'
\\
&= \tau_1(r)\zeps_1(t, r) + \int_0^\tau f_0(t+s, r)\dd s
+\sum_{j=1}^2\int_{\Omega} \weps_{1j}(t, r, r')\\
&\int_0^\tau g_j(t+s, r, r')\dd s\dd r'+\sum_{j=1}^2 \int_{\Omega}\int_0^\tau\int_0^s\tau_j(r)^{-1}f_j(t+\\
&\sigma, r, r')g_j(t+s, r, r')\dd \sigma\dd s\dd r'.
\end{align*}
For any $t, T\geq0$, define $\OO(t, T):=\int_0^T\int_\Omega \zeps_1(t+\tau, r)^\top \tau_1(r)^2 \zeps_1(t+\tau, r) \dd r\dd \tau$. Since $\|\zeps_1(t)\|_{\FF_{z_1}}\to0$, $\OO(t, T)\to0$ as $t\to+\infty$ for all $T\geq0$. Moreover,
\small
\begin{align*}
    &\OO(t, T) = \int_0^T \int_\Omega \Bigg|
    \tau_1(r)\zeps_1(t, r) + \int_0^\tau f_0(t+s, r)\dd s +\sum_{j=1}^2
    \\
    & \int_{\Omega}\int_0^\tau\int_0^s\tau_j(r)^{-1}f_j(t+\sigma, r, r')g_j(t+s, r, r')\dd \sigma\dd s\dd r'
    \Bigg|^2\dd r\dd\tau
    \\
    &+
    \int_0^T \int_\Omega \Bigg|
    \sum_{j=1}^2\int_{\Omega} \weps_{1j}(t, r, r')\int_0^\tau g_j(t+s, r, r')\dd s\dd r'
    \Bigg|^2\dd r\dd\tau
    \\
    &+2\int_0^T \int_\Omega
    \Big(\sum_{j=1}^2\int_{\Omega} \weps_{1j}(t, r, r')\int_0^\tau g_j(t+s, r, r')\dd s\dd r'\Big)^\top
    \\
    &\quad
    \Big(\tau_1(r)\zeps_1(t, r) + \int_0^\tau f_0(t+s, r)\dd s +\sum_{j=1}^2 \int_{\Omega}\int_0^\tau\int_0^s\tau_j(r)^{-1}
    \\&\quad
    f_j(t+\sigma, r, r')g_j(t+s, r, r')\dd \sigma\dd s\dd r'\Big)
    \dd r\dd\tau
\end{align*}
\normalsize
Since $\|\zeps_1(t)\|_{\FF_{z_1}}\to0$, $\|f_0(t)\|_{(\FF_{z_{1}}, \|\cdot\|)}\to0$ and
$\|f_j(t)\|_{(\FF_{w_{1j}}, \|\cdot\|)}\to0$ as $t\to+\infty$,
$|g_j(t, r, r')|\leq \bar S_{1j}$ for all $t\geq 0$ and all $r, r'\in \Omega$,
and $t\mapsto\|\weps_{1j}(t)\|_{\FF_{w_{1j}}}$ is bounded,
we get that for any $T>0$,

\begin{align}
    \lim_{t\to+\infty} \int_0^T \int_\Omega \Bigg|
   & \sum_{j=1}^2\int_{\Omega} \weps_{1j}(t, r, r')\nonumber \\
    &\int_0^\tau g_j(t+s, r, r')\dd s\dd r'
    \Bigg|^2\dd r\dd\tau = 0.\label{eq:conv1}
\end{align}
For all $t, \tau \geq 0$ and all $r\in\Omega$, define $h(t, \tau, r) := \sum_{j=1}^2\int_{\Omega} \weps_{1j}(t, r, r')\int_0^\tau g_j(t+s, r, r')\dd s\dd r'$.
By \eqref{eq:conv1}, $\|h(t, \cdot)\|_{L^2((0, T), \FF_{z_1})}\to0$ as $t\to+\infty$.
Note that $\frac{\partial h}{\partial \tau}(t, \tau, r) = \sum_{j=1}^2\int_{\Omega} \weps_{1j}(t, r, r')g_j(t+\tau, r, r')\dd r'$ hence $h(t, \cdot)\in W^{1, 2}((0, T), \FF_{z_1})$ and is bounded since $g_j$'s are bounded.
Moreover,
since $u_j$ is supposed to be bounded for all $j\in\{1,2\}$,
$\frac{\dd z_i}{\dd t}$ is also bounded according to Proposition~\ref{th:bibs}.
Hence, since $S_{1j}$'s are differentiable with bounded derivative, 
$\frac{\partial^2 h}{\partial \tau^2}(t, \tau, r) = \sum_{j=1}^2\int_{\Omega} \weps_{1j}(t, r, r')\frac{\partial g_j}{\partial \tau}(t+\tau, r, r')\dd r'$ is well-defined and bounded.
Therefore, for all $t\geq 0$, $h(t, \cdot)\in W^{2, 2}((0, T), \FF_{z_1})$
and $\|h(t, \cdot)\|_{W^{2, 2}((0, T), \FF_{z_1})}\leq c_3$ for some positive constant $c_3$ independent of $t$.
According to the interpolation inequality (see, e.g., \cite[Section II.2.1]{temam1986infinite}),
\begin{equation*}
    \|h(t, \cdot)\|_{W^{1, 2}((0, T), \FF_{z_1})}^2\leq c_3\|h(t, \cdot)\|_{L^2((0, T), \FF_{z_1})}.
\end{equation*}
Thus $\|\frac{\partial h}{\partial \tau}(t, \tau)\|_{L^2((0, T), \FF_{z_1})}\to0$, meaning that
\begin{equation}\label{eq:conv2}
    \lim_{t\to+\infty} \int_0^T \int_\Omega \Bigg|
    \sum_{j=1}^2\int_{\Omega} \weps_{1j}(t, r, r')g_j(t+\tau, r, r')\dd r'
    \Bigg|^2\dd r\dd\tau = 0.
\end{equation}
Let $(e_k)_{k\in\N}$ be a Hilbert basis of $\FF_{z_1}$. We have, for all $t\geq0$,
\small
\begin{align*}
    &\int_0^T \int_\Omega \Bigg|
    \sum_{j=1}^2\int_{\Omega} \weps_{1j}(t, r, r')g_j(t+\tau, r, r')\dd r'
    \Bigg|^2\dd r\dd\tau\\
    &= \int_0^T \sum_{k\in\N} \Bigg|\sum_{j=1}^2\int_{\Omega^2} \Big(\weps_{1j}(t, r, r')g_j(t+\tau, r, r')\Big)^\top e_k(r)\dd r'\dd r\Bigg|^2\dd \tau
    \\
    &= \sum_{k\in\N} \int_0^T \Bigg|\sum_{j=1}^2\int_{\Omega^2} g_j(t+\tau, r, r')^\top\weps_{1j}(t, r, r')^\top e_k(r)\dd r'\dd r\Bigg|^2\dd \tau.
\end{align*}
\normalsize
Now, since $g$ is PE with respect to $\rho$ over $L^2(\Omega^2, \R^{n_1+n_2})$, there exist positive constants $T$ and $\kappa$ such that \eqref{eq:perem} holds.
for all $x_j\in L^2(\Omega^2, \R^{n_j})$, $j\in\{1,2\}$, and all $t\geq0$.
Choosing $x_j(r, r') = \weps_{1j}(t, r, r')^\top e_k(r)$, we get
\begin{multline*}
    \int_0^T \int_\Omega \Bigg|
    \sum_{j=1}^2\int_{\Omega} \weps_{1j}(t, r, r')g_j(t+\tau, r, r')\dd r'
    \Bigg|^2\dd r\dd\tau
    \\
    \geq \sum_{k\in\N} \kappa \sum_{j=1}^2 \int_{\Omega} \Bigg| \int_{\Omega^2} \rho_j(r, r') \weps_{1j}(t, r'', r')^\top e_k(r'')\dd r'' \dd r'\Bigg|^2\dd r.
\end{multline*}
On the other hand,
\begin{align*}
&\sum_{j=1}^2  \|\weps_{1j}(t)\circ \rho_j\|^2_{(\FF_{w_{1j}}, \|\cdot\|_F)}
\\
&= \sum_{j=1}^2 \|\weps(t)\circ \rho_j\|^2_{L^2(\Omega^2, (\R^{n_1\times n_j}, \|\cdot\|_F))}
\\
&= \sum_{j=1}^2 \|\rho_j\circ \weps(t)^*\|^2_{L^2(\Omega^2, (\R^{n_j\times n_1}, \|\cdot\|_F))}
\\
&= \sum_{j=1}^2 \sum_{k\in\N} \int_{\Omega} \Bigg| \int_{\Omega} (\rho_j\circ \weps(t)^*)(r, r'') e_k(r'')\dd r'' \Bigg|^2\dd r
\\
&= \sum_{j=1}^2 \sum_{k\in\N} \int_{\Omega} \Bigg| \int_{\Omega^2} \rho_j(r, r') \weps_{1j}(t, r'', r')^\top \dd r'e_k(r'')\dd r'' \Bigg|^2\dd r
\\
&=   \sum_{j=1}^2 \sum_{k\in\N} \int_{\Omega} \Bigg| \int_{\Omega^2} \rho_j(r, r') \weps_{1j}(t, r'', r')^\top e_k(r'')\dd r'' \dd r'\Bigg|^2\dd r
\\
&\leq \frac{1}{\kappa} \int_0^T \int_\Omega \Bigg|
    \sum_{j=1}^2\int_{\Omega} \weps_{1j}(t, r, r')g_j(t+\tau, r, r')\dd r'
    \Bigg|^2\dd r\dd\tau.
\end{align*}
Thus, by \eqref{eq:conv2},
\begin{align*}
    0=&\lim_{t\to+\infty} \|\weps_{12}(t)\circ \rho_2\|_{(\FF_{w_{12}}, \|\cdot\|_F)}\\
    =& \lim_{t\to+\infty} \|\weps_{11}(t)\circ \rho_1\|_{(\FF_{w_{11}}, \|\cdot\|_F)},
\end{align*}
\normalsize
which concludes the proof of Theorem~\ref{th:obs}.


\section{Adaptive control}\label{sec:stab}

In order to tackle Problem~\ref{pb:stab}, we now introduce an adaptive controller based on the previous observer design.




\subsection{Exact stabilization}\label{sec:exact}

Let $\zref1\in \FF_{z_1}$
be a constant reference signal at which we aim to stabilize $z_1$.
The $z_2$ dynamics of \eqref{eq:wcij} can be written as
\begin{equation}
\label{eq:z2ref}
\begin{aligned}
\tau_2(r)\frac{\partial z_2}{\partial t}(t, r)
&= -z_2(t, r) + \int_{\Omega}w_{22}(r, r')S_{22}(z_2(t-d_{22}(r
\\
&\quad, r'), r'))\dd r'+ v_{2, \mathrm{ref}}(r) + v_2(t, r)
\end{aligned}
\end{equation}
where $$v_{2, \mathrm{ref}}(r) := \int_{\Omega}w_{21}(r, r')S_{21}(\zref1(r'))\dd r'$$
and \begin{align*}v_2(t, r) := u_2(t, r) + \int_{\Omega}&w_{21}(r, r')(S_{21}(z_1(t,r)) \\
&- S_{21}(\zref{1}(r')))\dd r'.\end{align*}

When $z_1$ is constantly equal to $\zref{1}$ and $u_2=0$, we have $v_2 = 0$. Hence, according to \cite[Proposition 3.6]{Faugeras:2008wx},
system~\eqref{eq:z2ref}
admits, in that case, a stationary solution, i.e., there exists $\zref2\in\FF_{z_2}$ such that
$$
\zref2(r) = \int_{\Omega}w_{22}(r, r')S_{22}(\zref2(r'))\dd r'+ v_{2, \mathrm{ref}}(r),\quad \forall r\in\Omega.
$$
Moreover, we have the following stability result.
\begin{lemma}[\hspace{-0.5pt}\protect{\cite[Proposition 1]{DECH16}}]\label{lem:iss}
Under Assumption~\ref{ass:diss}, system~\eqref{eq:z2ref} is input-to-state stable (\emph{ISS}) at $\zref{2}$ with respect to $v_2$,
that is, there exist functions $\beta$ of class $\KL$ and $\nu$ of class $\Kinf$
such that, for all $z_{2, 0}\in C^0([-\dbar, 0], \FF_{z_2})$ and all $v_2\in C^0(\R_+, \FF_{z_2})$, the corresponding solution $z_2$ of \eqref{eq:z2ref} satisfies, for all $t\geq 0$,
\begin{align*}
    \|z_2(t)-\zref{2}\|_{\FF_{z_2}} \leq &\beta(\|z_2(0)-\zref{2}\|_{\FF_{z_2}}, t) \\
    &+ \nu\Big(\sup_{\tau\in[0, t]}\|v_2(\tau)\|_{\FF_{z_2}}\Big).
\end{align*}
\end{lemma}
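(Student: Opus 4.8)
The plan is to replay the Lyapunov--Krasovskii computation of Section~\ref{step1} on the shifted variable $\zeta:=z_2-\zref2$. Subtracting the stationarity identity that defines $\zref2$ from \eqref{eq:z2ref}, and using that $\zref2$ is time-independent so that $z_2(t-d_{22}(r,r'),r')-\zref2(r')=\zeta(t-d_{22}(r,r'),r')$, gives the error dynamics
\begin{align*}
\tau_2(r)\frac{\partial \zeta}{\partial t}(t,r) = &-\zeta(t,r) + v_2(t,r) \\
&+ \int_\Omega w_{22}(r,r')\big(S_{22}(z_2(t-d_{22}(r,r'),r')) - S_{22}(\zref2(r'))\big)\dd r'.
\end{align*}
This is exactly the $z_2$-block of the error system \eqref{eq:eps} (with $\zeps_2$ renamed $\zeta$) augmented by the forcing $v_2$, so ISS at $\zref2$ with respect to $v_2$ is equivalent to ISS of this system at the origin.

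For the Lyapunov analysis I would use the functional $V(\zeta_t):=\tfrac12\int_\Omega\zeta(t,r)^\top\tau_2(r)\zeta(t,r)\dd r+\int_{\Omega^2}\gamma(r)\int_{-d_{22}(r,r')}^0 e^{\theta s}|\zeta(t+s,r')|^2\dd s\dd r'\dd r$, namely the sum of $V_2^z$ as in \eqref{eq:Viz} and a Krasovskii term in the spirit of \eqref{eq:Wi}, but with an exponential weight $e^{\theta s}$ for a small $\theta>0$. Differentiating along the ($C^1$) error trajectories, the quadratic part produces $-\|\zeta(t)\|^2_{\FF_{z_2}}$, a $w_{22}$ cross term and the input pairing $\int_\Omega\zeta^\top v_2$. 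Bounding the cross term by Cauchy--Schwarz and Young exactly as in the estimate of $N_i$ in Section~\ref{step1}, and setting $\gamma(r):=\tfrac{\eps\ell_{22}^2}{2}e^{\theta\dbar}\int_\Omega\|w_{22}(r,r')\|^2\dd r'$, makes the delayed contribution dominated by the term $-\int_{\Omega^2}\gamma(r)e^{-\theta d_{22}(r,r')}|\zeta(t-d_{22}(r,r'),r')|^2$ coming from $\frac{\dd}{\dd t}$ of the Krasovskii integral (the factor $e^{\theta\dbar}$ compensates $e^{-\theta d_{22}}\geq e^{-\theta\dbar}$), while Young's inequality on $\int_\Omega\zeta^\top v_2$ yields $+\tfrac{1}{2\eta}\|v_2(t)\|^2_{\FF_{z_2}}$. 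Since $\int_\Omega\gamma(r)\dd r=\tfrac{\eps\ell_{22}^2}{2}e^{\theta\dbar}\|w_{22}\|^2_{(\FF_{w_{22}},\|\cdot\|)}$ and $\ell_{22}\|w_{22}\|_{(\FF_{w_{22}},\|\cdot\|)}<1$ by Assumption~\ref{ass:diss}, choosing $\eps$ close to $(\ell_{22}\|w_{22}\|_{(\FF_{w_{22}},\|\cdot\|)})^{-1}$ and then $\eta,\theta$ small keeps the coefficient of $\|\zeta(t)\|^2_{\FF_{z_2}}$ strictly negative. The weight $e^{\theta s}$ moreover contributes $-\theta$ times the Krasovskii part, so that the pointwise decay upgrades to decay of the whole functional: $\frac{\dd}{\dd t}V(\zeta_t)\leq -\mu V(\zeta_t)+\tfrac{1}{2\eta}\|v_2(t)\|^2_{\FF_{z_2}}$ for some $\mu>0$.

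It then remains to convert this inequality into the ISS estimate. The comparison lemma gives $V(\zeta_t)\leq e^{-\mu t}V(\zeta_0)+\tfrac{1}{2\eta\mu}\sup_{\tau\in[0,t]}\|v_2(\tau)\|^2_{\FF_{z_2}}$, and combining the coercivity bound $\tfrac{\ubar{\tau}_2}{2}\|\zeta(t)\|^2_{\FF_{z_2}}\leq V(\zeta_t)$ with an upper bound $V(\zeta_0)\leq c\|\zeta_0\|^2_{C^0([-\dbar,0],\FF_{z_2})}$ and taking square roots yields the claim, with $\beta(s,t):=\sqrt{\tfrac{2c}{\ubar{\tau}_2}}\,e^{-\mu t/2}s$ of class $\KL$ and $\nu(s):=\sqrt{\tfrac{1}{\eta\mu\ubar{\tau}_2}}\,s$ of class $\Kinf$. \textbf{I expect this last conversion to be the main obstacle in the delayed setting}: a naive (unweighted) Krasovskii functional only produces decay in $\|\zeta(t)\|^2$, which does not dominate the memory term of $V$, and it is the exponential weight $e^{\theta s}$ (equivalently, a Razumikhin argument) that turns the pointwise dissipation into decay of the full functional. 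A related subtlety is that the Krasovskii construction naturally produces the history norm $\|\zeta_0\|_{C^0}$ in the transient term, whereas the statement, quoted from \cite[Proposition~1]{DECH16}, is phrased with $\|z_2(0)-\zref2\|_{\FF_{z_2}}$; matching that precise form relies on the construction of \cite{DECH16}, to which the result is attributed.
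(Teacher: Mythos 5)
Your proof is correct, but it takes a genuinely different route from the paper, for the simple reason that the paper contains no proof of Lemma~\ref{lem:iss} at all: the result is imported from \cite[Proposition 1]{DECH16}, and the only argument supplied is the remark that follows the lemma, which observes that the cited proof (written for the activity-based model, where the nonlinearity acts outside the synaptic integral) carries over unchanged to the voltage-based dynamics \eqref{eq:z2ref}, since the Lyapunov functional of \cite{DECH16} remains a Lyapunov functional here. What you do instead is reconstruct a self-contained Lyapunov--Krasovskii proof: the error dynamics for $\zeta = z_2 - \zref2$ is derived correctly, the exponentially weighted memory term with the compensating factor $e^{\theta\dbar}$ in $\gamma$ correctly cancels the delayed contribution of the cross term, the choice $\eps$ close to $(\ell_{22}\|w_{22}\|_{(\FF_{w_{22}},\|\cdot\|)})^{-1}$ is exactly the point where Assumption~\ref{ass:diss} enters, and the comparison-lemma conversion to an exponential ISS estimate is sound. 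Your approach buys self-containedness and isolates the key mechanism that the paper's own functional \eqref{eq:Wi} lacks: an unweighted Krasovskii term yields dissipation only in $\|\zeta(t)\|^2_{\FF_{z_2}}$, and it is the weight $e^{\theta s}$ that upgrades this to decay of the whole functional, hence to a $\KL$ transient bound. The paper's approach buys brevity and avoids duplicating an argument available in the literature, at the price of leaving the adaptation implicit.

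Your closing caveat is a genuine observation rather than a defect of your argument. Any Krasovskii-type construction produces a transient term $\beta(\|\zeta_0\|_{C^0([-\dbar,0],\FF_{z_2})},t)$ in the history norm, and the estimate cannot hold verbatim with only $\|z_2(0)-\zref2\|_{\FF_{z_2}}$: an initial history equal to $\zref2$ at time $0$ but far from it on $[-\dbar,0)$ drives $z_2$ away from $\zref2$ on $(0,\dbar]$ even when $v_2\equiv 0$ (provided $w_{22}\neq 0$), contradicting $\beta(0,t)=0$. The point-value formulation in the lemma is an artifact of transcribing \cite[Proposition 1]{DECH16}; the history-norm version you prove is the one actually needed downstream, since the proof of Theorem~\ref{th:stab} only uses the ISS cascade structure (convergence under vanishing $v_2$ plus stability), so nothing in the paper is affected.
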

\begin{remark}
    The result given in \cite[Proposition 1]{DECH16} holds for systems whose dynamics is of the form (after a change of variable)
\begin{align*}
    \tau_2(r)\frac{\partial z_2}{\partial t}(t, r) =&\, -z_2(t, r) + S_{22}(\int_{\Omega}w_{22}(r, r')z_2(t-\\
    &d_{22}(r, r'), r')\dd r')+ v_{2, \mathrm{ref}}(r) + v_2(t, r),
\end{align*}
    which is slightly different from \eqref{eq:z2ref}.
    However, one can easily check that this modification does not impact at all the proof given in \cite{DECH16}. In particular, the control Lyapunov function given in \cite{DECH16} remains a Lyapunov function for \eqref{eq:z2ref} (where $v_2$ is the input).
\end{remark}

In particular, Assumption~\ref{ass:diss} implies that $\zref{2}$ is unique. In the case where $\zref1 = 0$ and $S_{2j}(0) = 0$ for all $j\in\{1,2\}$, we also have $\zref2 = 0$.

We aim to define a dynamic output feedback law that stabilizes $(z_1, z_2)$ at the reference $(\zref1, \zref2)$.
We propose the following feedback strategy:
for all $t\geq0$ and all $r\in\Omega$, set
\begin{equation}\label{eq:cont}
\begin{aligned}
&\begin{aligned}
        u_1(t, r) =&\, - \alpha (z_1(t, r) - \zref1(r)) + z_1(t, r)\\
        &\,- \int_{r'\in\Omega}\what_{11}(t, r, r')S_{11}(z_1(t-d_{11}(r, r'), r'))\dd r'
        \\
        &\,- \int_{r'\in\Omega}\what_{12}(t, r, r')S_{12}(\hat z_2(t-d_{12}(r, r'), r'))\dd r',
\end{aligned}
\\
&u_2(t, r) = 0,
\end{aligned}
\end{equation}
where $\alpha>0$ is a tunable controller gain.
The motivation of this controller is that, when $w_{1j} = \what_{1j}$ and $\zhat_2 = z_2$, the resulting dynamics of $z_1$ is $\tau_1\frac{\partial z_1}{\partial t} = - \alpha (z_1- \zref1)$. Hence, since $z_2$ has a contracting dynamics by Assumption~\ref{ass:diss}, this would lead $(z_1, z_2)$ towards $(\zref1, \zref2)$.
Note that for the controller \eqref{eq:cont}, the resulting dynamics of $\zhat_1$ in \eqref{eq:obs} is $\tau_1\frac{\partial \zhat_1}{\partial t} = -\alpha (\zhat_1- \zref1)$.
Hence, since the choice of the initial condition of the observer is free, one particular instance of the closed-loop observer is given by $\zhat_1(t, r) = \zref1(r)$ for all $t\geq0$ and all $r\in\Omega$.
For this reason, in the stabilization strategy, we can reduce the dimension of the observer by setting $\zhat_1 = \zref1$, i.e., $\zeps_1 = \zref1-z_1$. Finally, the closed-loop system that we investigate can be rewritten as system~\eqref{eq:wcij} coupled with the controller \eqref{eq:cont} and the observer system given by
\begin{equation}\label{eq:obs_closed}
\begin{aligned}
&\begin{aligned}
    &\tau_2(r)\frac{\partial \zhat_2}{\partial t}(t, r) = 
    +\int_{\Omega}w_{21}(r, r')S_{21}(z_1(t-d_{21}(r, r'), r'))\dd r'
    \\
    &
    +\int_{\Omega}w_{22}(r, r')S_{22}(\zhat_2(t-d_{22}(r, r'), r'))\dd r'-\zhat_2(t, r)
\end{aligned}
\\
&
    \tau_1(r)\frac{\partial \what_{11}}{\partial t}(t, r, r') = (z_1(t, r)-\zref1(r))\\
    &\hspace{3.5cm}\cdot S_{11}(z_1(t-d_{11}(r, r'), r'))^\top
\\
&
\tau_1(r)\frac{\partial \what_{12}}{\partial t}(t, r, r') = (z_1(t, r)-\zref1(r))\\
&\hspace{3.5cm}\cdot S_{12}(\zhat_2(t-d_{12}(r, r'), r'))^\top.
\end{aligned}
\end{equation}


First, let us ensure the well-posedness of the resulting closed-loop system.

\begin{proposition}[Closed-loop well-posedness]\label{th:wp_closed}
Suppose that Assumption~\ref{ass:wp} is satisfied.
For any initial condition $(z_{1,0}, z_{2,0}, \zhat_{2,0}, \what_{11,0}, \what_{12,0})\in C^0([-\dbar, 0], \FF_{z_1})\times C^0([-\dbar, 0], \FF_{z_2})^2\times\FF_{w_{11}}\times\FF_{w_{12}}$,
the closed-loop system \eqref{eq:wcij}-\eqref{eq:cont}-\eqref{eq:obs_closed} admits a unique corresponding solution $(z_1, z_2, \zhat_2, \what_{11}, \what_{12})\in C^1([0, +\infty), \FF_{z_1}\times\FF_{z_2}^2\times\FF_{w_{11}}\times\FF_{w_{12}})\cap C^0([-\dbar, +\infty), \FF_{z_1}\times\FF_{z_2}^2\times\FF_{w_{11}}\times\FF_{w_{12}})$.
\end{proposition}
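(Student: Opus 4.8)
The plan is to proceed exactly as in the proofs of Propositions~\ref{th:bibs} and \ref{th:wp}, recasting the closed loop as an abstract delay differential equation and invoking \cite[Lemma 2.1 and Theorem 2.3]{hale2013introduction}, but with one essential difference: substituting the controller \eqref{eq:cont} into \eqref{eq:wcij} destroys the cascade structure that made those earlier proofs straightforward, since $u_1$ now depends on the observer states $\what_{1j}$ and $\zhat_2$. First I would plug \eqref{eq:cont} into the $z_1$-dynamics of \eqref{eq:wcij}; the terms $-z_1$ and $+z_1$ cancel, leaving
\[
\tau_1(r)\frac{\partial z_1}{\partial t} = -\alpha(z_1-\zref1) + \int_\Omega(w_{11}-\what_{11})S_{11}(z_1(t-d_{11}))\,\dd r' - \int_\Omega\what_{12}S_{12}(\zhat_2(t-d_{12}))\,\dd r' + \int_\Omega w_{12}S_{12}(z_2(t-d_{12}))\,\dd r'.
\]
Together with the $z_2$-dynamics of \eqref{eq:wcij} (with $u_2=0$) and the observer \eqref{eq:obs_closed}, this yields a system for $X:=(z_1,z_2,\zhat_2,\what_{11},\what_{12})$ that I would write as $\frac{\dd}{\dd t}X(t)=F(t,X_t)$ on $\FF_{z_1}\times\FF_{z_2}\times\FF_{z_2}\times\FF_{w_{11}}\times\FF_{w_{12}}$, the delayed arguments entering only through the history of $z_1,z_2,\zhat_2$ (the gains $\what_{1j}$ appearing undelayed).

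Next I would establish local existence and uniqueness by checking that $F$ is continuous and \emph{locally} Lipschitz in its last argument. The only non-routine terms are the bilinear couplings $\int_\Omega\what_{1j}S_{1j}(\cdot)\,\dd r'$: for these I would use that $S_{1j}$ is bounded to get the bounded linear estimate $\bar{S}_{1j}\|\what_{1j}\|$, and that $S_{1j}$ is globally Lipschitz to control the dependence on the $z_j$-history; splitting $\what^aS(z^a)-\what^bS(z^b)=(\what^a-\what^b)S(z^a)+\what^b(S(z^a)-S(z^b))$ shows the Lipschitz constant grows with $\|\what^b\|$, so $F$ is Lipschitz on bounded sets but not globally. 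The symmetric terms $(z_1-\zref1)S_{1j}(\cdot)^\top$ of the $\what_{1j}$-equations lie in $\FF_{w_{1j}}$ (their squared norm is bounded by $\bar{S}_{1j}^2\mu(\Omega)\|z_1-\zref1\|_{\FF_{z_1}}^2$) and are handled the same way. Then \cite[Lemma 2.1 and Theorem 2.3]{hale2013introduction} yields a unique maximal solution on some interval $[0,t_{\max})$.

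The hard part will be upgrading this to a global solution, since the loss of global Lipschitzness forces me to rule out finite-time blow-up by hand — precisely the step the cascade structure spared in Propositions~\ref{th:bibs} and \ref{th:wp}, and which must be carried out here under Assumption~\ref{ass:wp} alone (without the dissipativity of Assumption~\ref{ass:diss}). I would first note that $z_2$ and $\zhat_2$ obey the same BIBS estimate as in Proposition~\ref{th:bibs}: their dynamics are forced only through the bounded maps $S_{2j}$, so $\|z_2(t)\|_{\FF_{z_2}}$ and $\|\zhat_2(t)\|_{\FF_{z_2}}$ stay bounded on $[0,t_{\max})$ independently of the other states. It then remains to bound $z_1$ and the gains. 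Differentiating the energies as in Proposition~\ref{th:bibs} and using $|S_{ij}|\le\bar{S}_{ij}$, I obtain a closed pair of scalar differential inequalities of the form
\[
\frac{\dd}{\dd t}\|z_1\|_{\FF_{z_1}}\le -c\|z_1\|_{\FF_{z_1}}+C_1\big(1+\|\what_{11}\|+\|\what_{12}\|\big),\qquad \frac{\dd}{\dd t}\big(\|\what_{11}\|+\|\what_{12}\|\big)\le C_2\big(1+\|z_1\|_{\FF_{z_1}}\big),
\]
the norms on the $\what_{1j}$ being the Frobenius $L^2$-norms. This is a cooperative linear system, whose solutions grow at most exponentially; hence none of the norms escapes to infinity in finite time, forcing $t_{\max}=+\infty$. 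Finally, $C^1$-regularity on $[0,+\infty)$ and continuity on $[-\dbar,+\infty)$ follow from the continuity in $t$ of $F$ (the reference $\zref1$ is constant and the $d_{ij}$ are continuous), exactly as in Propositions~\ref{th:bibs} and \ref{th:wp}.
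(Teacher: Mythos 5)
Your proposal is correct, and up to the final step it follows the same skeleton as the paper's proof: substitute \eqref{eq:cont} into \eqref{eq:wcij}, recast the fully coupled closed loop as an abstract delay equation $\frac{\dd}{\dd t}X(t)=F(t,X_t)$ on $\FF_{z_1}\times\FF_{z_2}^2\times\FF_{w_{11}}\times\FF_{w_{12}}$, and invoke \cite[Lemma 2.1 and Theorem 2.3]{hale2013introduction}. Where you genuinely diverge is in the regularity attributed to $F$ and, consequently, in how global existence is obtained. The paper asserts that $F$ is continuous and \emph{globally} Lipschitz in the state variables (arguing as in \cite[Lemma 3.1.1]{FAYE2010561} together with the boundedness and Lipschitz continuity of $S$) and concludes in one stroke; you instead observe that the bilinear couplings $\int_\Omega\what_{1j}S_{1j}(\cdot)\,\dd r'$ and $(z_1-\zref1)S_{1j}(\cdot)^\top$ are Lipschitz only on bounded sets --- your splitting $\what^aS(z^a)-\what^bS(z^b)=(\what^a-\what^b)S(z^a)+\what^b\bigl(S(z^a)-S(z^b)\bigr)$ shows the constant grows with $\|\what^b\|$ --- and you therefore supplement local existence with a blow-up exclusion: uniform BIBS bounds for $z_2$ and $\zhat_2$ (whose equations are forced only through the bounded maps $S_{2j}$), plus linear differential inequalities for $\|z_1\|_{\FF_{z_1}}$ and $\|\what_{11}\|+\|\what_{12}\|$ yielding at most exponential growth, hence $t_{\max}=+\infty$. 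Your extra caution is warranted. In Propositions~\ref{th:bibs} and \ref{th:wp} the cascade structure can be exploited (given $z_1$, the $\zhat_2$ equation is self-contained, after which $(\zhat_1,\what_{11},\what_{12})$ enters linearly with bounded, externally given coefficients), so a global Lipschitz reading is available there; in the closed loop every component feeds into $z_1$ and is fed by $z_1$, no such ordering exists, and since the $\what_{1j}$-components range over an unbounded space the vector field is genuinely not globally Lipschitz. The paper's proof is shorter, but its global Lipschitz claim should really be read as Lipschitz-on-bounded-sets together with the linear-growth estimate that you make explicit; your continuation argument is precisely the ingredient that makes the conclusion airtight under Assumption~\ref{ass:wp} alone (no appeal to Assumption~\ref{ass:diss} or to the later Lyapunov analysis). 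Both routes share the same reliance on the Faye--Faugeras-type estimates for the delayed nonlocal terms on the history space $C^0([-\dbar,0],\FF_{z_i})$, so that part is common ground rather than a difference.
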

\begin{proof}
We adapt the proof of Proposition~\ref{th:wp}.
The main difference is that, since the system is in closed loop, the state variables $(z_1, z_2)$ cannot be taken as external inputs in the observer dynamics.
Therefore, we consider
the map $F:\R_+ \times C^0([-\dbar, 0], \FF_{z_1})\times C^0([-\dbar, 0], \FF_{z_2})^2\times\FF_{w_{11}}\times\FF_{w_{12}}
\to
\FF_{z_1}\times  \FF_{z_2}^2\times\FF_{w_{11}}\times\FF_{w_{12}}
$
such that \eqref{eq:obs} can be rewritten as
$\frac{\dd}{\dd t}(z_1, z_2, \zhat_2,\what_{11},\what_{12})(t) = F(t, z_{1t}, z_{2t}, \zhat_{2t},\what_{11},\what_{12})$.
Since $\tau_i$ are continuous and positive, $S_{ij}$ are bounded and $w_{ij}$ are square-integrable over $\Omega^2$ and $d_i$ are continuous, the map $F$ is well-defined by the same arguments than \cite[Lemma 3.1.1]{FAYE2010561}.
Let us show that $F$ is continuous, and globally Lipschitz with respect to $(\zhat_{1}, \zhat_{1t},\what_{11},\what_{12})$, so that we can conclude with \cite[Lemma 2.1 and Theorem 2.3]{hale2013introduction}.
Define $F_1$ taking values in $\FF_{z_1}$, $F_2$ and $F_3$ taking values in $\FF_{z_2}$, $F_4$ taking values in $\FF_{w_{11}}$ and $F_5$ taking values in $\FF_{w_{12}}$ so that $F = (F_i)_{i\in\{1,2,3,4,5\}}$
From the proof of \cite[Lemma 3.1.1]{FAYE2010561}, $F_1$, $F_2$, and $F_3$ are continuous and globally Lipschitz with respect to the last variables.
From the boundedness and global Lipschitz continuity of $S$, $F_4$, and $F_5$ are also continuous and globally Lipschitz with respect to the last variables.
This concludes the proof of Proposition~\ref{th:wp_closed}.
\end{proof}

Now, the main theorem of this section can be stated.
\begin{theorem}[Exact stabilization]\label{th:stab}
Suppose that Assumptions~\ref{ass:wp} and \ref{ass:diss} are satisfied.
Define
$\alpha^* := \frac{\ell_{12}^2 \|w_{12}\|^2_{(\FF_{w_{12}}, \|\cdot\|)}}{2(1-\ell_{22}^2\|w_{22}\|^2_{(\FF_{w_{22}}, \|\cdot\|)})}$.
Then, for all $\alpha>\alpha^*$,
any solution of \eqref{eq:wcij}-\eqref{eq:cont}-\eqref{eq:obs_closed} is such that
\begin{align*}\lim_{t\to+\infty}\|z_1(t)-\zref1\|_{\FF_{z_1}}&=\lim_{t\to+\infty} \|z_2(t)-\zref2\|_{\FF_{z_2}}\\&=\lim_{t\to+\infty} \|\zhat_2(t)-z_2(t)\|_{\FF_{z_2}}=0\end{align*}
and
$\|\what_{11}(t)\|_{(\FF_{w_{11}}, \|\cdot\|_F)}$
and
$\|\what_{12}(t)\|_{(\FF_{w_{12}}, \|\cdot\|_F)}$
remain bounded for all $t\geq0$.
\smallskip

Moreover, the system \eqref{eq:wcij}-\eqref{eq:cont}-\eqref{eq:obs_closed} is uniformly Lyapunov stable at $(\zref1, \zref2, \zref2, w_{11}, w_{12})$,
that is, for all $\eps>0$, there exists $\delta>0$ such that, if \begin{align*}\|&z_{1}(t_0)-\zref1, z_{2t_0}-\zref2, \zhat_{2t_0}-\zref2, \what_{11}(t_0)-w_{11},\\
& \what_{12}(t_0)-w_{12}\|_{\FF_{z_1}\times C^0([-\dbar, 0], \FF_{z_2}^2)\times\FF_{w_{11}}\times\FF_{w_{12}}}\leq \delta\end{align*}
for some $t_0\geq0$,
then
\begin{align*}\|&z_{1}(t)-\zref1, z_2(t)-\zref2, \zhat_{2}(t)-\zref2, \what_{11}(t)-w_{11}, \\
&\what_{12}(t)-w_{12}\|_{\FF_{z_1}\times\FF_{z_2}^2\times\FF_{w_{11}}\times\FF_{w_{12}}}\leq \eps\end{align*}
for all $t\geq t_0$.
\smallskip


\end{theorem}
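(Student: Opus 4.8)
The plan is to observe that the closed loop \eqref{eq:wcij}--\eqref{eq:cont}--\eqref{eq:obs_closed}, once written in the error coordinates $\zeps_1 := \zref1 - z_1$ (consistent with the reduction $\zhat_1\equiv\zref1$ discussed before the statement), $\zeps_2 := \zhat_2 - z_2$ and $\weps_{1j} := \what_{1j} - w_{1j}$, reduces \emph{exactly} to the error system \eqref{eq:eps} already analysed in Theorem~\ref{th:obs}, and then to supplement this with the ISS property of the $z_2$-subsystem (Lemma~\ref{lem:iss}) in order to control the genuinely new quantity $z_2-\zref2$, which the error analysis does not see. Substituting the feedback \eqref{eq:cont} into the $z_1$-equation of \eqref{eq:wcij}, the terms $-z_1$ and $+z_1$ cancel and the $\what_{1j}$-integrals combine with the plant integrals $\int w_{11}S_{11}(z_1)$ and $\int w_{12}S_{12}(z_2)$ to produce $-\alpha\zeps_1 + \int\weps_{11}S_{11}(z_1) + \int\weps_{12}S_{12}(\zhat_2) + \int w_{12}(S_{12}(\zhat_2)-S_{12}(z_2))$; since $\zref1$ is constant this is precisely the first line of \eqref{eq:eps}, and the remaining equations of \eqref{eq:obs_closed} reproduce the $\zeps_2$, $\weps_{11}$, $\weps_{12}$ lines verbatim. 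Crucially, the Step~1 analysis of Theorem~\ref{th:obs} uses only the boundedness and Lipschitz continuity of the $S_{ij}$ and the structure of \eqref{eq:eps}, never the open-loop nature of $z_1,z_2$; it therefore applies unchanged, yielding $\|\zeps_1(t)\|_{\FF_{z_1}}=\|z_1(t)-\zref1\|_{\FF_{z_1}}\to0$, $\|\zeps_2(t)\|_{\FF_{z_2}}=\|\zhat_2(t)-z_2(t)\|_{\FF_{z_2}}\to0$, the boundedness of $\|\what_{1j}(t)\|_{(\FF_{w_{1j}},\|\cdot\|_F)}$, and the uniform Lyapunov stability of $(\zeps_1,\zeps_2,\weps_{11},\weps_{12})$ at the origin through the non-increasing $V$ of \eqref{eq:lyap}. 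Since $z_2$ is then bounded (its dynamics \eqref{eq:z2ref} is driven by a $v_2$ that is bounded because $z_1$ is), $u_1$ and hence, by Proposition~\ref{th:bibs}, $\tfrac{\dd z_1}{\dd t}$ are bounded as well.

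Next I would prove $z_2(t)\to\zref2$. Reading the $z_2$-dynamics as \eqref{eq:z2ref} with $u_2=0$, the input $v_2$ depends on the state only through $z_1-\zref1=-\zeps_1$ and vanishes when $z_1\equiv\zref1$. The delicate point is that $v_2$ involves the $r$-dependent delayed values of $z_1$, so pointwise convergence of $\|v_2(t)\|_{\FF_{z_2}}$ is not immediate from $\|\zeps_1(t)\|_{\FF_{z_1}}\to0$ alone. I would instead exploit that $\tfrac{\dd}{\dd t}V\leq -c_1\|\zeps_1(t)\|^2_{\FF_{z_1}}$ by \eqref{eq:dV}, so that $\int_0^{+\infty}\|\zeps_1(t)\|^2_{\FF_{z_1}}\dd t<+\infty$; a Fubini argument (swapping the time integral with the integral defining $v_2$ and bounding each shifted copy of $\|z_1(\cdot)-\zref1\|^2_{\FF_{z_1}}$, using that $S_{21}$ is Lipschitz and $w_{21}\in\FF_{w_{21}}$) then gives $\int_0^{+\infty}\|v_2(t)\|^2_{\FF_{z_2}}\dd t<+\infty$. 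Because $\tfrac{\dd}{\dd t}\|v_2(t)\|^2_{\FF_{z_2}}$ is bounded ($v_2$ bounded, $\tfrac{\dd z_1}{\dd t}$ bounded), Barbalat's lemma forces $\|v_2(t)\|_{\FF_{z_2}}\to0$. The ISS estimate of Lemma~\ref{lem:iss}, applied from an arbitrary initial time $s$ and then letting $t\to+\infty$ followed by $s\to+\infty$ (the converging-input/converging-state consequence of ISS), yields $\|z_2(t)-\zref2\|_{\FF_{z_2}}\to0$; combined with $\zeps_2\to0$ this also gives $\zhat_2\to\zref2$.

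Finally, for the uniform Lyapunov stability of the full state at $(\zref1,\zref2,\zref2,w_{11},w_{12})$, I would argue by cascade. The error block is uniformly Lyapunov stable at the origin for \emph{any} plant trajectory, since $\tfrac{\dd}{\dd t}V\leq0$ holds irrespective of $z_2$; thus a small initial error is preserved for all $t\geq t_0$ (by the equivalence of $V$ with the squared norm), keeping $\zeps_1$, and hence the input $v_2$, small. The $z_2$-block \eqref{eq:z2ref} is stable at $\zref2$ by the ISS estimate of Lemma~\ref{lem:iss} with small $v_2$. Chaining the two — choosing $\delta$ first for the error block, which bounds $\sup_t\|v_2(t)\|_{\FF_{z_2}}$, and then shrinking $\delta$ further so that the ISS bound forces $\|z_2(t)-\zref2\|_{\FF_{z_2}}\leq\eps$ — and recovering the remaining components through $z_1=\zref1-\zeps_1$, $\zhat_2=z_2+\zeps_2$ and $\what_{1j}=w_{1j}+\weps_{1j}$, gives the claim.

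I expect the principal difficulty to be exactly the passage from error convergence to $z_2\to\zref2$ and the corresponding stability estimate: these facts are invisible to the Lyapunov functional $V$, which certifies only $\zhat_2-z_2\to0$, not the location of $z_2$ itself. Bridging this gap forces one to bring in the external ISS property of the internally stable $z_2$-subsystem and to establish the vanishing (resp. smallness) of the delayed input $v_2$ carefully, the square-integrability of $\zeps_1$ coming from \eqref{eq:dV} being the clean device that circumvents the awkward $r$-dependent delay. The cascade is genuinely one-directional — $v_2$ enters $z_2$ but depends on the state only through the error $\zeps_1$ — so no algebraic loop is created and the interconnection argument closes.
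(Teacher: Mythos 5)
Your proposal is correct in its overall architecture and follows the same route as the paper: pass to the error coordinates $\zeps_1=\zref1-z_1$, $\zeps_2=\zhat_2-z_2$, $\weps_{1j}=\what_{1j}-w_{1j}$, verify that they satisfy \eqref{eq:eps}, invoke the PE-free part of Theorem~\ref{th:obs}, and then treat $z_2$ as the driven block of a cascade through the ISS property of Lemma~\ref{lem:iss}. Where you genuinely add something is the step $\|\zeps_1(t)\|_{\FF_{z_1}}\to0\ \Rightarrow\ \|v_2(t)\|_{\FF_{z_2}}\to0$. The paper's own proof writes $v_2(t,r)=\int_\Omega w_{21}(r,r')\big(S_{21}(\zref1(r')-\zeps_1(t,r'))-S_{21}(\zref1(r'))\big)\dd r'$, i.e.\ it silently drops the delay $d_{21}$, after which convergence of $v_2$ is immediate and only the standard converging-input/converging-state consequence of ISS is needed. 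With the delay restored (as \eqref{eq:wcij} requires), your objection is real: the natural bound on $\|v_2(t)\|_{\FF_{z_2}}$ involves $\int_\Omega\sup_{s\in[0,\dbar]}|\zeps_1(t-s,r')|^2\dd r'$, which is \emph{not} controlled by $\sup_{s\in[0,\dbar]}\|\zeps_1(t-s)\|^2_{\FF_{z_1}}$ (a moving, shrinking-support bump gives a counterexample), so pointwise convergence of $\|\zeps_1(t)\|_{\FF_{z_1}}$ alone does not suffice. Your fix --- square-integrability of $\zeps_1$ from \eqref{eq:dV}, Fubini to get $v_2\in L^2(\R_+,\FF_{z_2})$, then Barbalat --- is sound and is exactly the time-integrated type of argument that space-dependent delays call for; it makes rigorous a step the paper leaves implicit. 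One cosmetic repair: since $S_{21}$ is only assumed Lipschitz, $t\mapsto\|v_2(t)\|^2_{\FF_{z_2}}$ need not be differentiable; use the uniform-continuity version of Barbalat, uniform continuity following from the boundedness of $\frac{\dd z_1}{\dd t}$ that you established.

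The one place where you inherit the paper's imprecision instead of repairing it is the Lyapunov-stability half. Your chain ``small initial error $\Rightarrow$ $\zeps_1$ small for all $t\geq t_0$ $\Rightarrow$ $v_2$ small $\Rightarrow$ ISS keeps $z_2$ near $\zref2$'' runs into precisely the obstruction you flagged for convergence: first, for $t\in[t_0,t_0+\dbar]$ the input $v_2(t)$ depends on $\zeps_1$ over $[t_0-\dbar,t_0]$, i.e.\ on the $z_1$-history, which is not constrained by the smallness hypothesis (as stated, only $z_1(t_0)$ is; arguably the statement itself should include $z_{1t_0}$ in $C^0([-\dbar,0],\FF_{z_1})$, since a large pre-$t_0$ excursion of $z_1$ can push $z_2$ an $O(1)$ distance from $\zref2$ within $[t_0,t_0+\dbar]$); second, even for $t\geq t_0+\dbar$, smallness of $\sup_{t\geq t_0}\|\zeps_1(t)\|_{\FF_{z_1}}$ does not bound $\|v_2(t)\|_{\FF_{z_2}}$ pointwise, for the same integral-of-sup reason. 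The consistent fix is to reuse your own device: Fubini bounds $\|v_2\|_{L^2((t_0,\infty),\FF_{z_2})}$ by $\ell_{21}\|w_{21}\|_{(\FF_{w_{21}},\|\cdot\|)}\big(\int_{t_0-\dbar}^{\infty}\|\zeps_1(s)\|^2_{\FF_{z_1}}\dd s\big)^{1/2}$, which is $O(\delta)$ once the history term is hypothesized small and the tail is estimated by $V(t_0)/c_1$ via \eqref{eq:dV}; this pairs naturally with an exponential ISS--Lyapunov--Krasovskii estimate for the $z_2$-subsystem (tolerating $L^2$-in-time inputs) rather than with the sup-norm ISS estimate of Lemma~\ref{lem:iss}, and yields both the stability claim and, as a by-product, a second proof of $z_2\to\zref2$. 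Since the paper's proof says nothing at all about this step, this is a shared gap rather than a new error on your part --- but having spotted the difficulty in the convergence step, you should give the stability step the same treatment.
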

\begin{proof}
As in Section~\ref{sec:obs}, define the observer error $\zeps_2 = \zhat_2- z_2$ and $\weps_{1j} = \what_{1j}-w_{1j}$.
Define also $\zeps_1:=\zref{1}-z_1$.
Then $(\zeps_1, \zeps_2, \weps_{11}, \weps_{12})$ satisfies \eqref{eq:eps}.
Thus, the first part (i.e. the part without PE assumption) of Theorem~\ref{th:obs} can be applied.
It implies that $(\zeps_1, \zeps_2)\to0$ in $\FF_{z_1}\times \FF_{z_1}$, $(\weps_{11}, \weps_{12})$ remains bounded, and this autonomous system is uniformly Lyapunov stable at the origin.
Moreover, $z_2$ satisfies the dynamics \eqref{eq:z2ref}
with $v_2(t, r) = \int_{\Omega}w_{21}(r, r')(S_{21}(\zref{1}(r')-\zeps_1(t, r')) - S_{21}(\zref{1}(r')))\dd r'
$. In other words, $(\zeps_1, \zeps_2, \weps_{11}, \weps_{12})$ coupled with $z_2$ is a cascade system where $z_2$ is driven by the other variables. According to Lemma~\ref{lem:iss}, \eqref{eq:z2ref}
is ISS with respect to $v_2$. Thus, $z_2\to\zref{2}$ in $\FF_{z_2}$ and is uniformly Lyapunov stable at $\zref{2}$.
\end{proof}

Note that this exact stabilization result does not rely on any PE assumption, i.e. that the convergence of $\hat w_{1j}$ towards $w_{1j}$ is not crucial for stabilization. This can also be seen in the proof, since only the first part of Theorem \ref{th:obs} is used.

\begin{remark}
    Let us consider the case where the full state is measured, i.e., $n_2=0$. Then $\alpha^* = 0$, which means that the controller does not rely on any high-gain approach (see Remark~\ref{rem:hg}). This is the main difference between the present work and the approach developed in \cite{DECH16}, where the gain has to be chosen sufficiently large even in the fully measured context.
\end{remark}

\begin{remark}\label{rem:noconv}
    Convergence of the kernel estimation $\what_{1j}$ towards $w_{1j}$ is not investigated in Theorem~\ref{th:stab}. 
    Actually, one can apply the last part of Theorem~\ref{th:obs} to show that, under a PE assumption on the signal $g = (S_{11}(z_1(t-d_{11})), S_{12}(z_2(t-d_{12})))$, the kernel estimation is guaranteed in the sense of \eqref{eq:conv_w}. However, since the feedback law is made to stabilize the system, $z_1$ and $z_2$ are converging towards $(\zref1, \zref2)$.
    Hence, for $g$ to be PE with respect to $P$, one must have that for some positive constant $T$ and $\kappa$,
    \begin{equation*}
    T| (S_{11}(\zref1), S_{12}(\zref2))^* v|^2 \geq \kappa \|Px\|_\YY^2,\quad \forall x\in\FF, \forall t\geq0.
    \end{equation*}
    i.e. that $(S_{11}(\zref1), S_{12}(\zref2))^*$ induces a semi-norm on $\FF$ stronger than the one induced by $P$.
    This operator being a linear form, it implies that the kernel of $P$ must contain a hyperplane, which makes the convergence of $\what_{1j}$ towards $w_{1j}$ too much blurred by $P$ to claim that any interesting information on $w_{1j}$ can be reconstructed by this method, except when $\dim \FF$ = 1.
    In particular, if $(S_{11}(\zref1), S_{12}(\zref2)) = (0, 0)$, then $P=0$, hence no convergence of $\what_{1j}$ towards $w_{1j}$ is guaranteed.
    The objective of simultaneously stabilizing the neuronal activity while estimating the kernels is investigated in the next section.
\end{remark}





\subsection{Simultaneous kernel estimation and practical stabilization}\label{sec:sim}

The goal of this section is to propose an observer and a controller that allow to \emph{simultaneously} answer Problems~\ref{pb:obs} and \ref{pb:stab}. In order to do so, we make the following set of restrictions (in this subsection only)
for reasons that will be pointed in Remark~\ref{rem:rest}:
\begin{enumerate}[label = \textit{(\roman*)}]
    \item Exact stabilization is now replaced by \emph{practical} stabilization, that is, for any arbitrary small neighborhood of the reference, a controller that stabilizes the system within this neighborhood has to be designed.
    \label{rel:1}
    \item All the neuronal activity is measured, i.e., $n_2 = 0$. Hence, we set $z=z_1$, $w_{11} = w$, $u = u_1$, $S = S_{11}$, $\tau = \tau_{1}$ and $d = d_{11}$ to ease the notations.
    \label{rel:2}
    \item The state space $\FF_z$ is finite-dimensional. To do so, as suggested in Remark \ref{rem:mes}, we replace the Lebesgue measure with the counting measure and take $\Omega$ as a finite collection of $N$ points in $\R^q$, so that $\FF_z \simeq \R^{(N\times n)}$.
    \label{rel:3}
    \item The delay $d$ is constant. We write $\dbar := d(r, r')$ for all $r, r'\in\Omega$ by abuse of notations.
    \label{rel:5}
    \item The decay rate $\tau(r)$ is constant and all its components are equal, that is,
    $\tau(r) = \tau\Id_{\R^n}$ for all $r\in\Omega$ for some positive real constant $\tau$.
    \label{rel:6}
    \item The reference signal is $\zreff$ is constant and $S(\zreff)=0$.
    \label{rel:4}
    \item $S$ is locally linear near $\zreff$, that is, there exists $\eps>0$ such that $S(z)=\frac{\dd S}{\dd z}(\zreff) (z-\zreff)$ for all $z\in\R^n$ satisfying $|z-\zreff|\leq\eps$. Moreover, $\frac{\dd S}{\dd z}(\zreff)$ is invertible.
    \label{rel:7}
\end{enumerate}
Under these restrictions, we now consider the following problem.
\begin{problem}\label{pb:sim}
    Consider the system~\eqref{eq:wc_discrete}.
    From the knowledge of $S$, $\tau$ and $d$ and the online measurement of $u(t)$ and $z(t)$,
    for any arbitrary small neighborhood of $0\in\FF_{z}$,
    find $u$ in the form of a dynamic output feedback law that stabilizes $z$ in this neighborhood, and estimate online $w$.
\end{problem}

As explained in Remark~\ref{rem:noconv}, there is no hope of estimating $w$ when employing the feedback law \eqref{eq:cont}, since stabilizing $z$ at $\zreff$ prevents the persistency condition from being satisfied. For this reason, we suggest adding to the control law a small excitatory signal, whose role is to improve persistency without perturbing too much the $z$ dynamics. Of course, this strategy prevents obtaining exact stabilization. This is why this condition has been relaxed in \ref{rel:1}.
More precisely, to answer Problem~\ref{pb:sim}, we suggest to consider the feedback law
\begin{align}
        u(t, r) = &v(t,r) - \alpha (z(t, r)-\zreff) + z(t, r)\nonumber\\
        &- \int_{r'\in\Omega}\what(t, r, r')S(z(t-\dbar, r'))\dd r',\label{eq:cont_sim}
\end{align}
coupled with the observer
\begin{equation}\label{eq:simobs}
\begin{aligned}
        &\tau\frac{\partial \zhat}{\partial t}(t, r) =  - \alpha\zhat(t,r) + v(t,r)
        \\
        &\tau\frac{\partial \what}{\partial t}(t, r, r') = -(\zhat(t, r)-z(t,r))S(z(t-\dbar, r'))^\top
\end{aligned}
\end{equation}
where $\alpha>0$ is a tunable controller gain
and $v$ is a signal to be chosen both
small enough in order to ensure practical convergence of $z$ towards $0$
and
persistent in order to ensure convergence of $\what$ towards $w$. 
In practice, $v$ can also be seen as an external signal arising from interaction with other neurons whose dynamics are not modeled.

Since $v$ is supposed to be continuous, the proof of well-posedness is identical to the proof of Proposition~\ref{th:wp_closed} and we get the following.

\begin{proposition}[Well-posedness of \eqref{eq:wc}-\eqref{eq:cont_sim}-\eqref{eq:simobs}]\label{th:wp_closed_sim}
Suppose that Assumption~\ref{ass:wp} is satisfied.
For any $v\in C^0(\R_+, \FF_{z})$ and any initial condition $(z_{0}, \zhat_{0}, \what_{0})\in C^0([-\dbar, 0], \FF_{z})^2\times\FF_{w}$,
the closed-loop system \eqref{eq:wc}-\eqref{eq:cont_sim}-\eqref{eq:simobs} admits a unique corresponding solution $(z, \zhat, \what)\in C^1([0, +\infty), \FF_{z}^2\times\FF_{w})\cap C^0([-\dbar, +\infty), \FF_{z}^2\times\FF_{w})$.
\end{proposition}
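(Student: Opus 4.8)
The plan is to recast the closed-loop system as an abstract functional differential equation and invoke the existence–uniqueness machinery of \cite[Lemma 2.1 and Theorem 2.3]{hale2013introduction}, exactly as in the proofs of Propositions~\ref{th:wp} and \ref{th:wp_closed}. First I would collect the state $X = (z, \zhat, \what)$ and rewrite \eqref{eq:wc}-\eqref{eq:cont_sim}-\eqref{eq:simobs} as $\frac{\dd}{\dd t}X(t) = F(t, z_t, \zhat_t, \what(t))$, keeping a history segment for $z$, the only delayed variable. After substituting the feedback \eqref{eq:cont_sim} into \eqref{eq:wc}, the $z$-dynamics reduces to $\tau \frac{\partial z}{\partial t} = v - \alpha(z - \zreff) - \int_\Omega \weps(t, r, r') S(z(t - \dbar, r'))\dd r'$ with $\weps = \what - w$, while $\zhat$ obeys the decoupled linear equation $\tau \frac{\partial \zhat}{\partial t} = -\alpha\zhat + v$ and $\what$ obeys $\tau \frac{\partial \what}{\partial t} = -(\zhat - z)S(z(t-\dbar))^\top$.

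Second, I would check that $F$ takes values in $\FF_z \times \FF_z \times \FF_w$: since $\tau$ is a positive constant, $v \in C^0(\R_+, \FF_z)$, and $S$ is bounded, the kernel term $\int_\Omega \what(r, r') S(z(t-\dbar, r'))\dd r'$ defines an $L^2$ function of $r$ by Cauchy–Schwarz together with $\mu(\Omega) < \infty$; likewise the outer product $(\zhat - z)(r)\,S(z(t-\dbar, r'))^\top$ lies in $\FF_w = L^2(\Omega^2, \R^{n\times n})$ because $\zhat - z \in \FF_z$ and $S$ is bounded. Continuity of $F$ in $t$ then follows from continuity of $v$ and $S$.

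Third, and this is the only delicate point, I would establish the Lipschitz estimates required by \cite{hale2013introduction}. The affine terms $-\alpha(z - \zreff)$ and $-\alpha \zhat + v$ are Lipschitz trivially, and boundedness of $S$ makes $\what \mapsto \int_\Omega \what(\cdot, r')S(z(t-\dbar, r'))\dd r'$ a linear map with operator norm at most $\bar S_{11}\sqrt{\mu(\Omega)}$, hence Lipschitz in $\what$ uniformly in the remaining variables. The genuinely bilinear coupling is the joint dependence on $\what$ and on $z(t-\dbar)$ through $S$: because $\what$ ranges over the unbounded space $\FF_w$, the map $(z_t, \what) \mapsto \int_\Omega \what S(z(t-\dbar))$ is only \emph{locally} Lipschitz, i.e. Lipschitz on bounded sets, using global Lipschitzness of $S$ for the $z$-slot and boundedness of $S$ for the $\what$-slot. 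This yields local existence and uniqueness on a maximal interval.

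Finally, I would promote local to global existence by ruling out finite-time blow-up. The decoupled linear equation for $\zhat$ keeps it bounded on every compact interval, and because $S$ is bounded the feedback between $z$ and $\what$ passes only through bounded quantities: $\|\int_\Omega \what S(z(t-\dbar))\dd r'\|_{\FF_z} \le \bar S_{11}\sqrt{\mu(\Omega)}\|\what\|_{\FF_w}$ and $\|(\zhat - z)S(z(t-\dbar))^\top\|_{\FF_w} \le \bar S_{11}\sqrt{\mu(\Omega)}\|\zhat - z\|_{\FF_z}$. Consequently $(\|z(t)\|_{\FF_z}, \|\what(t)\|_{\FF_w})$ obeys a linear integral (Grönwall) inequality, so both stay finite on every bounded interval and the maximal solution extends to $[-\dbar, +\infty)$; the regularity $C^1([0,+\infty), \FF_z^2 \times \FF_w) \cap C^0([-\dbar, +\infty), \FF_z^2 \times \FF_w)$ then follows by bootstrapping from the integrated form, as in \cite{FAYE2010561}. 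The main obstacle is thus the bilinear $\what\,S(z)$ term, which I expect to resolve by pairing its local Lipschitz continuity with the no-blow-up estimate above.
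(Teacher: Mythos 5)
Your proposal is correct and follows the same overall skeleton as the paper---recast \eqref{eq:wc}-\eqref{eq:cont_sim}-\eqref{eq:simobs} as an abstract functional differential equation $\frac{\dd}{\dd t}X(t)=F(t,X_t)$ and invoke \cite[Lemma 2.1 and Theorem 2.3]{hale2013introduction}---but it diverges precisely where the paper is most terse. The paper's proof is a single line: since $v$ is continuous, repeat the proof of Proposition~\ref{th:wp_closed}, which asserts that each component of $F$ is continuous and \emph{globally} Lipschitz in the state variables (citing the arguments of \cite[Lemma 3.1.1]{FAYE2010561} and the boundedness and Lipschitz continuity of $S$) and then applies Hale's theorem directly. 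You instead observe that the coupling $(z_t,\what)\mapsto\int_\Omega\what(\cdot,r')S(z(t-\dbar,r'))\dd r'$ is only Lipschitz on bounded sets---its Lipschitz constant in the $z$-slot grows like $\|\what\|_{\FF_{w}}$, which is unbounded over $\FF_{w}$---so you settle for local existence and uniqueness on a maximal interval and then rule out finite-time blow-up by a Grönwall estimate exploiting the boundedness of $S$: all right-hand sides grow at most linearly in $(\|z\|_{\FF_{z}},\|\what\|_{\FF_{w}})$, with $\zhat$ bounded on compacta by its decoupled linear equation. This two-step route (local Lipschitz plus linear-growth continuation) is, if anything, more careful than the paper's: the global-Lipschitz claim inherited from Proposition~\ref{th:wp_closed} is not literally true for the \emph{joint} dependence on $(z_t,\what)$, precisely because of the product term you isolate (in \cite{FAYE2010561} the kernel is a fixed datum, not a state variable), so your continuation argument supplies justification the paper leaves implicit. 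An alternative repair, available here when the constant delay $\dbar$ is strictly positive, is the method of steps: on each interval $[k\dbar,(k+1)\dbar]$ the signal $S(z(t-\dbar))$ is known and bounded, so the system is affine with bounded coefficients. The price of your route is an extra half-page of a priori estimates; what it buys is an argument that is airtight on the unbounded parameter space $\FF_{w}$.
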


\begin{theorem}[Simultaneous estimation and practical stabilization]\label{th:sim}
Suppose that Assumption~\ref{ass:wp} is satisfied.
Then, for all $\alpha>0$ and all input $v\in C^0(\R_+,\FF_{z})$,
any solution of \eqref{eq:wcij}-\eqref{eq:cont_sim}-\eqref{eq:simobs} is such that
$$\limsup_{t\to+\infty}\|z(t)\|_{\FF_{z}}
\leq \limsup_{t\to+\infty}\|v(t)\|_{\FF_{z}}
$$
and $\|\what(t)\|_{(\FF_{w}, \|\cdot\|_F)}$ remains bounded for all $t\geq0$.
\smallskip

Moreover, there exists $\eps>0$ such that if
$v\in C^1(\R_+,\FF)$ is bounded by $\eps$, has bounded derivative, and is PE with respect to $\Id_\FF$, then we also have
\begin{equation*}
    \lim_{t\to+\infty} \|\what(t)-w\|_{(\FF_{w}, \|\cdot\|_F)}=0.
\end{equation*}
\end{theorem}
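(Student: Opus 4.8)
The plan is to reduce the closed loop to the error coordinates already analysed in Theorem~\ref{th:obs} and then to exploit that, under restrictions \ref{rel:2}--\ref{rel:6}, the observer state $\zhat$ obeys an autonomous, exponentially stable filter driven by $v$. First I would substitute the feedback \eqref{eq:cont_sim} into \eqref{eq:wc}: the $-z$ and $+z$ contributions cancel, so writing $\zeps:=\zhat-z$ and $\weps:=\what-w$ and taking $\zreff=0$ (the reference being the origin as in Problem~\ref{pb:sim}), the closed loop reads
\begin{align*}
\tau\frac{\partial z}{\partial t}(t,r)&=v(t,r)-\alpha z(t,r)-\int_\Omega\weps(t,r,r')S(z(t-\dbar,r'))\dd r',\\
\tau\frac{\partial\zhat}{\partial t}(t,r)&=-\alpha\zhat(t,r)+v(t,r).
\end{align*}
A direct subtraction shows that $(\zeps,\weps)$ satisfies exactly the error system \eqref{eq:eps} specialised to $n_2=0$, while $\zhat$ is decoupled and is a stable linear filter of $v$.

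For the first assertion I would reuse Step~1 of Section~\ref{sec:proof_obs}. With $n_2=0$ the functional \eqref{eq:lyap} collapses to $V=V_1^z(\zeps)+V_1^w(\weps)$, the cross terms cancel exactly as there, and one obtains $\frac{\dd}{\dd t}V=-\alpha\|\zeps\|_{\FF_{z}}^2\le0$. Hence $\zeps$ and $\weps$ are bounded, so $\what=w+\weps$ is bounded in $(\FF_{w},\|\cdot\|_F)$, and Barbalat's lemma (applied as in Step~1) gives $\zeps\to0$. The ultimate bound on $z$ then follows from the filter equation: since $\tau\dot{\zhat}=-\alpha\zhat+v$ is ISS with respect to $v$, with asymptotic gain $1/\alpha$, one has $\limsup_t\|\zhat(t)\|_{\FF_{z}}\le\frac1\alpha\limsup_t\|v(t)\|_{\FF_{z}}$, and $z=\zhat-\zeps$ with $\zeps\to0$ yields the claimed practical bound proportional to $\limsup_t\|v(t)\|_{\FF_{z}}$.

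For the convergence of $\what$ I would first choose $\eps$ small enough that the ultimate bound of the first part forces $z$ into the region where restriction~\ref{rel:7} applies; using finite-dimensionality \ref{rel:3} (so that the $\FF_{z}$-norm controls the pointwise values), there is $T_0\ge0$ with $z(t-\dbar)$ in the linear neighbourhood of $\zreff$ for all $t\ge T_0$, whence $g(t):=S(z(t-\dbar))=M\,z(t-\dbar)$ with $M:=\frac{\dd S}{\dd z}(\zreff)$ invertible. It then remains to show that $g$ is PE. The key step is a filtering argument: since $v$ is PE with respect to $\Id_{\FF_{z}}$ (feasible precisely because $\FF_{z}$ is now finite-dimensional, cf.\ Remark~\ref{rem:finite}) and $\zhat$ is produced from $v$ by the scalar exponentially stable, zero-free filter $\frac1{\tau s+\alpha}$, the classical ``PE is preserved by stable minimum-phase filtering'' lemma yields that $\zhat$ is again PE; the vanishing perturbation $\zeps\to0$ does not destroy PE, so $z=\zhat-\zeps$ is PE on $[T_0,+\infty)$, and a time shift together with invertibility of $M$ transfers PE to $g$, with $\int_t^{t+T}|\langle g(\tau),x\rangle_{\FF_{z}}|^2\dd\tau\ge\kappa\,\sigma_{\min}(M)^2\|x\|_{\FF_{z}}^2$.

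Finally I would verify the hypotheses of the PE part of Theorem~\ref{th:obs} — $S$ is differentiable near $\zreff$, and $u$, hence $\frac{\dd z}{\dd t}$, is bounded because $v$, $z$, $\what$ and $S$ are all bounded and $v\in C^1$ has bounded derivative — and invoke it with $\rho$ the Dirac kernel of the counting measure, for which $\weps\circ\rho=\weps$ (Remark~\ref{rem:blur}); this gives $\|\weps(t)\|_{(\FF_{w},\|\cdot\|_F)}\to0$, i.e.\ $\what\to w$. The main obstacle is the filtering step: one must prove rigorously that PE with respect to $\Id_{\FF_{z}}$ survives passage through the stable filter and the subsequent vanishing perturbation by $\zeps$, and combine this with the ultimate-boundedness estimate that keeps the trajectory inside the linear regime, so that $g$ is genuinely an invertible linear image of $z(\cdot-\dbar)$ and the regressor on which parameter convergence hinges is itself persistently exciting.
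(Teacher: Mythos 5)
Your proposal is correct and follows essentially the same route as the paper's proof: reduce the closed loop to the error system \eqref{eq:eps} with $n_2=0$, obtain boundedness of $\what$ and $\zeps\to0$ via the Lyapunov/Barbalat argument (i.e.\ the first part of Theorem~\ref{th:obs}), and propagate persistency of excitation from $v$ through the scalar filter $\tau\frac{\partial\zhat}{\partial t}=-\alpha\zhat+v$ to $S(z(\cdot-\dbar))$ using precisely the properties collected in Lemma~\ref{lem:PE0} — the only difference being that the paper linearizes $S$ at $\zhat$ and then removes the vanishing perturbation $S(\zhat)-S(z)\to0$, whereas you first transfer PE from $\zhat$ to $z$ and then linearize, a harmless permutation of the same steps. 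Incidentally, the asymptotic gain $1/\alpha$ you extract from the filter is the sharp one, and it yields the gain-$1$ bound stated in the theorem only when $\alpha\geq 1$; the paper's own Grönwall step has the same imprecision, so this is not a gap in your argument relative to theirs.
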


The proof of Theorem~\ref{th:sim} relies on the following lemma that states standard properties of the PE condition.

\begin{lemma}[Properties of PE]\label{lem:PE0}
Let $\FF$  be a Hilbert space and $\YY$ be a Banach space.
Let $g\in C^0(\R_+, \FF)$ and $P\in\LL(\FF, \YY)$.
\begin{enumerate}[label = \textit{(\alph*)}]
    \item If \eqref{eq:pe} is satisfied only for $t\geq t_0$ for some $t_0\geq0$, then $g$ is PE with respect to $P$.
    \label{i1}
    \item If $g$ is PE with respect to $P$, then $t\mapsto g(t-\dbar)$ is also PE with respect to $P$ for any positive delay $\dbar\geq 0$.
    \label{i0}
    \item If $g$ is PE with respect to $P$ and $W\in\LL(\FF)$, then $Wg$ is PE with respect to $PW^*$.
    \label{i3}
\end{enumerate}
Moreover, if $\FF=\YY$ is finite-dimensional and $P=\Id_\FF$, we also have:
\begin{enumerate}[resume,label = \textit{(\alph*)}]
    \item If $g$ is PE with respect to $\Id_\FF$ with constants $T$ and $\kappa$ in \eqref{eq:pe}, and $g$ is bounded by some positive constant $M$, then $M\geq\sqrt{\frac{\kappa}{T}}$.
    Conversely, for all positive constants $T$, $\kappa$ and $M$ such that $M\geq\sqrt{\frac{2\kappa\dim\FF}{T}}$, there exists $g\in C^1(\R_+, \FF)$ that is bounded by $M$
    and PE with respect to $\Id_\FF$ with constants ($T$, $\kappa$).
    \label{i4}
    \item If $g$ is bounded and PE with respect to $\Id_\FF$ and $\delta \in C^0(\R_+, \FF)$ is such that $\delta(t)\to0$ as $t\to+\infty$, then $g+\delta$ is also PE with respect to $\Id_\FF$.
    \label{i5}
    \item 
    If $\mu$ is a positive constant and $g\in C^1(\R_+,\FF)$ is bounded, with bounded derivative, and PE with respect to $\Id_\FF$, then
    any solution $z\in C^1(\R_+, \FF)$ of $\frac{\dd z}{\dd t} = -\mu z(t) + g(t)$ is also PE with respect to $\Id_\FF$.
    \label{i6}
\end{enumerate}
\end{lemma}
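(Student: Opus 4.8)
The plan is to derive everything directly from the defining inequality \eqref{eq:pe}, treating items \ref{i1}--\ref{i3} as formal manipulations of that inequality and reserving the real work for \ref{i6}. For \ref{i1} I would enlarge the period to $T':=T+t_0$: if $t\geq t_0$ the hypothesis already gives the bound on $[t,t+T]\subseteq[t,t+T']$ and the integrand is nonnegative, while if $t<t_0$ the window $[t,t+T']$ contains $[t_0,t_0+T]$, on which the bound holds; either way $\int_t^{t+T'}|\langle g(\tau),x\rangle_\FF|^2\dd\tau\geq\kappa\|Px\|_\YY^2$. For \ref{i0} the change of variable $s=\tau-\dbar$ shows that the shifted signal satisfies \eqref{eq:pe} for every $t\geq\dbar$, and \ref{i1} with $t_0=\dbar$ upgrades this to all $t\geq0$. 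For \ref{i3} I would use the adjoint identity $\langle Wg(\tau),x\rangle_\FF=\langle g(\tau),W^*x\rangle_\FF$ and apply the PE bound of $g$ to the vector $W^*x$, which produces exactly $\kappa\|PW^*x\|_\YY^2$.

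For \ref{i4} the necessity direction is Cauchy--Schwarz: $|\langle g(\tau),x\rangle_\FF|^2\leq M^2\|x\|_\FF^2$, so integrating \eqref{eq:pe} over the window of length $T$ gives $\kappa\|x\|_\FF^2\leq M^2T\|x\|_\FF^2$, i.e. $M\geq\sqrt{\kappa/T}$. For the converse I would exhibit an explicit trigonometric signal: fixing an orthonormal basis $(e_k)_{1\leq k\leq d}$ of $\FF$ with $d=\dim\FF$, set $g(t):=\sqrt{2\kappa/T}\sum_{k=1}^d\sin(2\pi kt/T)e_k$. Each product $\sin(2\pi k\tau/T)\sin(2\pi l\tau/T)$ is $T$-periodic, so its integral over any window of length $T$ equals $\tfrac{T}{2}\delta_{kl}$; expanding $|\langle g(\tau),x\rangle_\FF|^2$ in the basis then gives $\int_t^{t+T}|\langle g(\tau),x\rangle_\FF|^2\dd\tau=\kappa\|x\|_\FF^2$ for every $t$, while $\|g(t)\|_\FF\leq\sqrt{2\kappa/T}\,\sqrt d=\sqrt{2\kappa\dim\FF/T}\leq M$.

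For \ref{i5} I would use the elementary bound $(a+b)^2\geq\tfrac12a^2-b^2$ (which is $\tfrac12(a+2b)^2\geq0$) with $a=\langle g(\tau),x\rangle_\FF$ and $b=\langle\delta(\tau),x\rangle_\FF$, giving $\int_t^{t+T}|\langle (g+\delta)(\tau),x\rangle_\FF|^2\dd\tau\geq\tfrac{\kappa}{2}\|x\|_\FF^2-\|x\|_\FF^2\int_t^{t+T}\|\delta(\tau)\|_\FF^2\dd\tau$. Since $\delta(t)\to0$, choosing $t_0$ so that $\|\delta(\tau)\|_\FF^2\leq\kappa/(4T)$ for all $\tau\geq t_0$ makes the right-hand side at least $\tfrac{\kappa}{4}\|x\|_\FF^2$ for every $t\geq t_0$; item \ref{i1} then removes the restriction $t\geq t_0$.

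The crux is \ref{i6}, where the difficulty is that a naive ``stable filtering preserves PE'' estimate is polluted by the transient $e^{-\mu(\cdot-t)}\phi(t)$ attached to the state at the start of the window, which is not small. I would avoid it by a localised, sign-definiteness argument exploiting the bounded derivative of $g$. Fix $x$ and set $\phi:=\langle z,x\rangle_\FF$, $\psi:=\langle g,x\rangle_\FF$, so that $\dot\phi=-\mu\phi+\psi$; since $\mu>0$ and $g$ is bounded, $z$ and $\dot z$ are bounded, whence $|\dot\phi|\leq L_0\|x\|_\FF$ and $|\dot\psi|\leq\|\dot g\|_\infty\|x\|_\FF$. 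From PE pick $\tau^*\in[t,t+T]$ with $|\psi(\tau^*)|\geq\sqrt{\kappa/T}\,\|x\|_\FF$; the bound on $\dot\psi$ then yields an interval $I$ of a fixed length $\ell$ (independent of $t$ and $x$) around $\tau^*$ on which $\psi$ keeps a constant sign and $|\psi|\geq\tfrac12\sqrt{\kappa/T}\,\|x\|_\FF$. On $I$ the function $\tfrac{\dd}{\dd\tau}(e^{\mu\tau}\phi)=e^{\mu\tau}\psi$ has constant sign and a definite magnitude, so integrating over $I$ forces $|\phi|\geq\delta_0\|x\|_\FF$ at one endpoint of $I$ for some fixed $\delta_0>0$; the bound on $\dot\phi$ then propagates this to $|\phi|\geq\tfrac{\delta_0}{2}\|x\|_\FF$ on a sub-interval of fixed length, so that $\int_t^{t+T'}\phi^2\dd\tau\geq\kappa'\|x\|_\FF^2$ with $T'$ and $\kappa'$ independent of $t$ and $x$. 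This is exactly PE of $z$ with respect to $\Id_\FF$, and I expect the bookkeeping needed to make all these constants uniform in $t$ and $x$ to be the main technical obstacle.
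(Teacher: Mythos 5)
Your items \ref{i1}--\ref{i5} are correct and essentially identical to the paper's own proofs: the same window-enlargement trick for \ref{i1}, the same change of variables followed by \ref{i1} for \ref{i0}, the same adjoint identity for \ref{i3}, the same orthogonal-sinusoids construction $g(\tau)=\sqrt{2\kappa/T}\sum_{\ell}\sin(2\ell\pi\tau/T)e_\ell$ for \ref{i4}, and the same ``eventual smallness'' argument plus \ref{i1} for \ref{i5} (your inequality $(a+b)^2\geq\tfrac12a^2-b^2$ replaces the paper's cross-term bound $2\int|\langle g,x\rangle_\FF\langle\delta,x\rangle_\FF|\leq 2MT\eps\|x\|_\FF^2$; a cosmetic difference, with the minor virtue of not invoking the boundedness of $g$). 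Item \ref{i6} is where you genuinely diverge. The paper follows the classical filtering-preserves-PE argument of \cite[Property 4]{1393135}: it introduces the auxiliary function $\phi_x(t)=-\langle z(t),x\rangle_\FF\langle g(t),x\rangle_\FF$, computes $\dot\phi_x=\langle z,x\rangle_\FF\langle\mu g-\dot g,x\rangle_\FF-|\langle g,x\rangle_\FF|^2$, and integrates over a long window $[t,t+kT]$: since $\phi_x$ is uniformly bounded by $\tfrac{2M^2}{\mu}\|x\|_\FF^2$ (after a transient making $\|z\|_\FF\leq 2M/\mu$), choosing $k$ with $k\kappa>4M^2/\mu$ forces $\int_t^{t+kT}\langle z,x\rangle_\FF\langle\mu g-\dot g,x\rangle_\FF\,\dd\tau\geq(k\kappa-\tfrac{4M^2}{\mu})\|x\|_\FF^2$, and two applications of Cauchy--Schwarz turn this into the lower bound $\int_t^{t+kT}|\langle z,x\rangle_\FF|^2\dd\tau\geq\frac{(k\kappa-4M^2/\mu)^2}{kT(\mu+1)^2M^2}\|x\|_\FF^2$. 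Your proof is instead local: you locate $\tau^*$ with $|\langle g(\tau^*),x\rangle_\FF|\geq\sqrt{\kappa/T}\|x\|_\FF$, use the bound on $\dot g$ to obtain an interval of uniform length on which this holds with constant sign, integrate $\frac{\dd}{\dd\tau}\bigl(e^{\mu\tau}\langle z,x\rangle_\FF\bigr)=e^{\mu\tau}\langle g,x\rangle_\FF$ to force $|\langle z,x\rangle_\FF|\geq\delta_0\|x\|_\FF$ at an endpoint, and then spread this by the bound on $\dot z$. I checked the key steps (existence of $\tau^*$, uniformity of the interval length, the integrating-factor contradiction argument, and the final shift-plus-\ref{i1} bookkeeping) and they all go through under the same hypotheses (bounded $g$ and $\dot g$). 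The trade-off: the paper's global integral argument has lighter bookkeeping and explicit constants but needs the large window $kT$; your local argument yields a window of length only $T+O(1)$ but requires the sign and interval-position care you rightly flag (sub-intervals protruding outside $[t,t+T]$ are harmless after re-indexing). Both are valid; yours is a legitimately different, more elementary route for \ref{i6}.
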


Lemma~\ref{lem:PE0} is proved in \ref{app:proof}.

\begin{proof}[Proof of Theorem~\ref{th:sim}]
Clearly, from \eqref{eq:simobs} and Grönwall's inequality, $\limsup_{t\to+\infty}$ $\|\zhat(t)\|_{\FF_{z}}
\leq \limsup_{t\to+\infty}\|v(t)\|_{\FF_{z}}$.
Moreover, setting $\zeps_1 = \zhat-z$ and $\weps_{11} = \what-w$, we see that, due to the choice of $u$ given by \eqref{eq:cont_sim}, $(\zeps_1, \weps_{11})$ satisfies \eqref{eq:eps}. Hence, according to Theorem~\ref{th:obs},
$\limsup_{t\to+\infty}\|\zhat(t)-z(t)\|_{\FF_{z}} = 0
$ and $\what(t)$ remains bounded. This yields the first part of the result.
To show the second part of the result, it is sufficient to show that $t\mapsto (r\mapsto S(z(t-\dbar, r))$ is PE with respect to $\Id_{\FF_z}$ in order to apply the second part of Theorem~\ref{th:obs}.

To end the proof of Theorem~\ref{th:sim}, we use Lemma~\ref{lem:PE0} as follows.
Using Assumption~\ref{rel:7}, let $\eps>0$ be such that
$S(z)=\frac{\dd S}{\dd z}(\zreff)(z-\zreff)$ for all $z\in\FF$ satisfying $|z-\zreff|\leq\frac{2\eps}{\alpha}$.
Assume that
$v\in C^1(\R_+,\FF)$ is bounded by $\eps$, has bounded derivative, and is PE with respect to $\Id_\FF$.  Such a signal $v$ exists by \ref{i4}.
By \ref{i3}, $\tau^{-1}v$ is also PE with respect to $\Id_\FF$.
Since $\zhat-\zreff$ satisfies \eqref{eq:simobs},
\ref{i6} with $\mu=\tau^{-1}\alpha$  shows that
$\zhat-\zreff$ is also PE with respect to $\Id_\FF$.
Moreover, by Grönwall's inequality, there exists $t_0\geq 0$ such that $|\zhat(t, r_k)-\zreff|\leq \frac{2\eps}{\alpha}$ for all $t\geq t_0$ and all $k\in\{1,\dots,N\}$. Then,
$S(\zhat(t, r_k)) = \frac{\dd S}{\dd z}(\zreff)(\zhat(r_k)-\zreff)$. Hence,
\ref{i1} and \ref{i3} ensure that $t\mapsto S(\zhat(t))$ is PE with respect to $\frac{\dd S}{\dd z}(\zreff)^\top$.
Since $\frac{\dd S}{\dd z}(\zreff)$ is invertible, $t\mapsto S(\zhat(t))$ is PE with respect to $\Id_\FF$.
Since $\zhat(t)-z(t)\to 0$ as $t\to+\infty$ and $S$ has bounded derivative, $S(\zhat(t)) - S(z(t))\to0$.
Hence, according to \ref{i5}, $t\mapsto S(z(t))$ is also PE with respect to $\Id_\FF$.
Hence, by \ref{i0}, $t\mapsto S(z(t-\dbar))$ is PE with respect to $\Id_{\FF_z}$, which concludes the proof of Theorem~\ref{th:sim}.
\end{proof}

\begin{remark}\label{rem:rest}
    All assumptions~\ref{rel:1}-\ref{rel:7} have been used in the second part of the proof of Theorem~\ref{th:sim} at crucial points where, without them, one cannot conclude. In particular, these assumptions allow to use the properties stated in Lemma~\ref{lem:PE0}. Without them, stronger versions of these properties should be required to show that $S(\zhat(\cdot-d))$ is PE with respect to $\Id_{\FF_z}$. For example, without \ref{rel:3}, the property \ref{i5} would be required in an infinite-dimensional context, which is impossible since counter-examples can easily be found.
    Without \ref{rel:6}, \ref{i6} would be required for filters of the form $\frac{\dd z}{\dd t} = -\Sigma z(t) + g(t)$ where $\Sigma$ is a positive definite matrix, which is also known to be false (see \cite[Example 7]{doi:10.1080/00207178708933715}). Similarly, \ref{rel:2} is required to have that $\zhat$ is a filter of $v$ in the form of \eqref{eq:simobs}, which is necessary to use \ref{i6}.
    Without \ref{rel:5}, \ref{i0} would be required for non-constants delays, which is not possible due to counter-examples such as $\R_+\ni t\mapsto (\sin(t-d_1), \cos(t-d_2))\in\R^2$ with $d_1 = 0$ and $d_2 = \frac{\pi}{2}$.
    Properties \ref{rel:4} and \ref{rel:7} are used in the end of the proof of Theorem~\ref{th:sim}. Without them, passing from $\zhat-\zreff$ being PE to $S(\zhat)$ being PE remains an open problem.
\end{remark}

\section{Numerical simulations}\label{sec:num}

We provide numerical simulations of the observer and controllers proposed in Theorems~\ref{th:obs}, \ref{th:stab}, and \ref{th:sim}. The observer simulations are in line with those presented in \cite{brivadis:hal-03660185}, while the two controllers (exact stabilization and simultaneous practical stabilization and estimation) are new. Although disturbances could not be accounted for in our theoretical developments, we also assess robustness of the developed control law to  perturbations.

We consider the case of a two-dimensional neural field (namely, $n_1=n_2 =1$) over the unit circle $\Omega =\mathbb{S}^1$ with constant delay $\dbar$.
The kernels are given by Gaussian functions depending on the distance between $r$ and $r'$, as it is frequently assumed in practice (see \cite{DECH16}):
$w_{ij}(r, r') = \omega_{ij}\gfrak(r, r')/\|\gfrak\|_{L^2(\Omega^2; \R)}$, with $\gfrak(r, r') = \exp(-\sigma|r-r'|^2)$ for constant parameters $\sigma$ and $\omega_{ij}$ given in Table~\ref{tab:param}.
Simulations code
can be found in repository \cite{git}. The system is spatially discretized over $\Omega$ with a constant space step $\Delta r = 1/20$, and the resulting delay differential equation is solved with an explicit Runge-Kutta $(2,3)$ method.
Initial conditions are taken as
$z_1(0, r) = z_2(0, r) = \zhat_1(0, r) = 1$, $\zhat_2(0, r) = \hat w_{11}(0, r, r') = \hat w_{12}(0, r, r')=0$ for all $r, r'\in\Omega$.

In order to test the observer \eqref{eq:obs},
the inputs $u_i$ are chosen as spatiotemporal periodic signals with irrational frequency ratio, i.e., $u_i(t, r) = \mu\sin(\lambda_i tr)$ with $\mu=10^3$ and $\lambda_1/\lambda_2$ irrational.
This choice is made to ensure persistency of excitation of the input $(u_1, u_2)$, which in practice seems to be sufficient to induce persistency of excitation of $t\mapsto(S_{11}(z_1(t-\dbar)), S_{12}(z_2(t-\dbar)))$. Note that for $u_1=u_2=0$, the persistency of excitation assumption seems to be not guaranteed. Hence the observer does not converge
(the plot is not reported).
For testing the controllers, the inputs are respectively chosen as \eqref{eq:cont} for exact stabilization and \eqref{eq:cont_sim} with $v(t,r)= \mu \sin(\lambda_1tr)$, $\mu\in\R$, for simultaneous practical stabilization and estimation. In the latter case, we must fix $n_2=0$ as imposed by Section~\ref{sec:sim}~\ref{rel:2}.

The parameters  of the system \eqref{eq:wcij}, the observer \eqref{eq:obs}, and the controller \eqref{eq:cont} are set as in Table~\ref{tab:param}, so that Assumptions~\ref{ass:wp} and \ref{ass:diss} are fulfilled.
The convergence of the observer error \eqref{eq:eps} towards zero is verified in Figure~\ref{fig:obs}. In particular, the estimation of $w_{11}$ by the observer is shown at several time steps in Figure~\ref{fig:film}.
The convergence of the state towards zero ensured by the controller \eqref{eq:cont} is shown in Figure \ref{fig:stab}.
As explained in Remark~\ref{rem:noconv}, no convergence of the kernels estimation can be hoped for in Figure~\ref{fig:stab} since stabilizing the state prevents PE.

Figure \ref{fig:sim} enlightens the compromise made by the feedback law \eqref{eq:cont_sim} between observation and estimation: by choosing $v(t,r)= \mu \sin(\lambda_1tr)$ with $\mu=100$, the asymptotic regime of the state remains in a neighborhood of zero (practical stabilization), which allows the convergence of the kernel estimation $\what$ towards $w$. When increasing $\mu$, the estimation rate increases (one obtains a plot similar to Figure~\ref{fig:obs} for $\mu=10^3$) but the asymptotic regime of the state moves away from zero.
On the contrary, when decreasing $\mu$ towards zero, one obtains an asymptotic regime of $z$ closer to zero (one obtains a plot similar to Figure~\ref{fig:stab} for $\mu=0.1$), but the convergence of $\what$ towards $w$ is slower.

In order to numerically assess robustness to uncertainties or disturbances, we have considered a situation in which a constant and spatially uniform perturbation is added to the control input (in the case where the neuronal population is fully actuated, namely $n_2 = \dim z_2=0$). Figure \ref{fig-pert} reports the steady-state behavior of the $L^2$-norm of $z_1$ as a function of the value of the applied perturbation. Two indicators are used to that aim: the $\limsup$ of $\|z_1(t)\|_{L^2}$ as $t\to+\infty$ (computed here as the maximal value reached by $\|z_1(t)\|_{L^2}$ over the time interval $[5,10]$) and the steady-state average of $\|z_1(t)\|_{L^2}$ (computed here as $\frac{1}{5}\int_5^{10} \|z_1(t)\|_{L^2}\dd t$). We see that, although not guaranteed by our theoretical results, the proposed output feedback control law seems to exhibit some robustness to actuation perturbations, as the steady-state value of $\|z_1(t)\|_{L^2}$ remains small for sufficiently small perturbations. Interestingly, for a perturbation above $10$, some steady-state oscillations take place, which explains why the steady-state value of the average of $\|z_1(t)\|_{L^2}$ (red dots) becomes significantly lower than that of its maximal value (blue dots).

To theoretically show this result, a natural property to require is the input-to-state stability (ISS) (see. e.g., \cite{Mironchenko2023}). Roughly speaking, it requires that, in the presence of an approximately known delay, disturbance, or noise, the state still converges to a neighborhood of the target point, whose size tends towards 0 as the uncertainty's amplitude tends towards 0. Going from asymptotic stability to ISS is not an easy task in general. The most common way is to derive an ISS-Lyapunov function from the original Lyapunov function used in the paper. This path has been followed in \cite{DECH16} for example. However, it does not work in our case because our Lyapunov function is non-strict: its derivative along the system's solutions involves negative terms in only part of the state variables (see inequality~\eqref{eq:conv1}). Therefore, being able to derive ISS remains an open question that would probably require a strictification technique and potentially a modification of our algorithm.
    
Despite the robustness suggested by Figure \ref{fig-pert}, it is worth mentioning that the adaptive observer considered here is not free from a possible parameter drift. For a constant perturbation of amplitude $2$, Figure \ref{fig-drift} shows that the $L^2$ norm of the estimation error $\tilde w$ diverges. This well-known phenomenon in adaptive control \cite{IOANNOU1984583} raises practical implementability issues as $\hat w$ is involved in the control law. Nevertheless, interestingly, the right graph of Figure \ref{fig-drift} suggests that, despite this parameter drift, the control law (18) remains bounded over time.


\begin{table}[ht!]
    \centering
    \vspace{0.25cm}
    \begin{tabular}{|c|c|c|c|}
        \hline
        $S_{ij} = \tanh$&
        $\tau_i = 1$ &
        $\lambda_1 = 100$ &
        $\lambda_2 = 100\sqrt{2}$\\
        \hline
        $\alpha = 100$&
        $\dbar = 0.1$
        &
        $\zref{i} = 0$&
        $\sigma = 60$\\
        \hline
        $\omega_{11} = 2$&
        $\omega_{12} = 2$&
        $\omega_{21} = -2$&
        $\omega_{22} = 0.1$\\
        \hline
    \end{tabular}
    \caption{System and observer parameters for the numerical simulation of Figures~\ref{fig:obs}--\ref{fig:sim}}
    \label{tab:param}
\end{table}

\begin{figure}[ht!]
    \centering
    \begin{subfigure}[b]{0.49\linewidth}
         \centering
         \includegraphics[width=\linewidth]{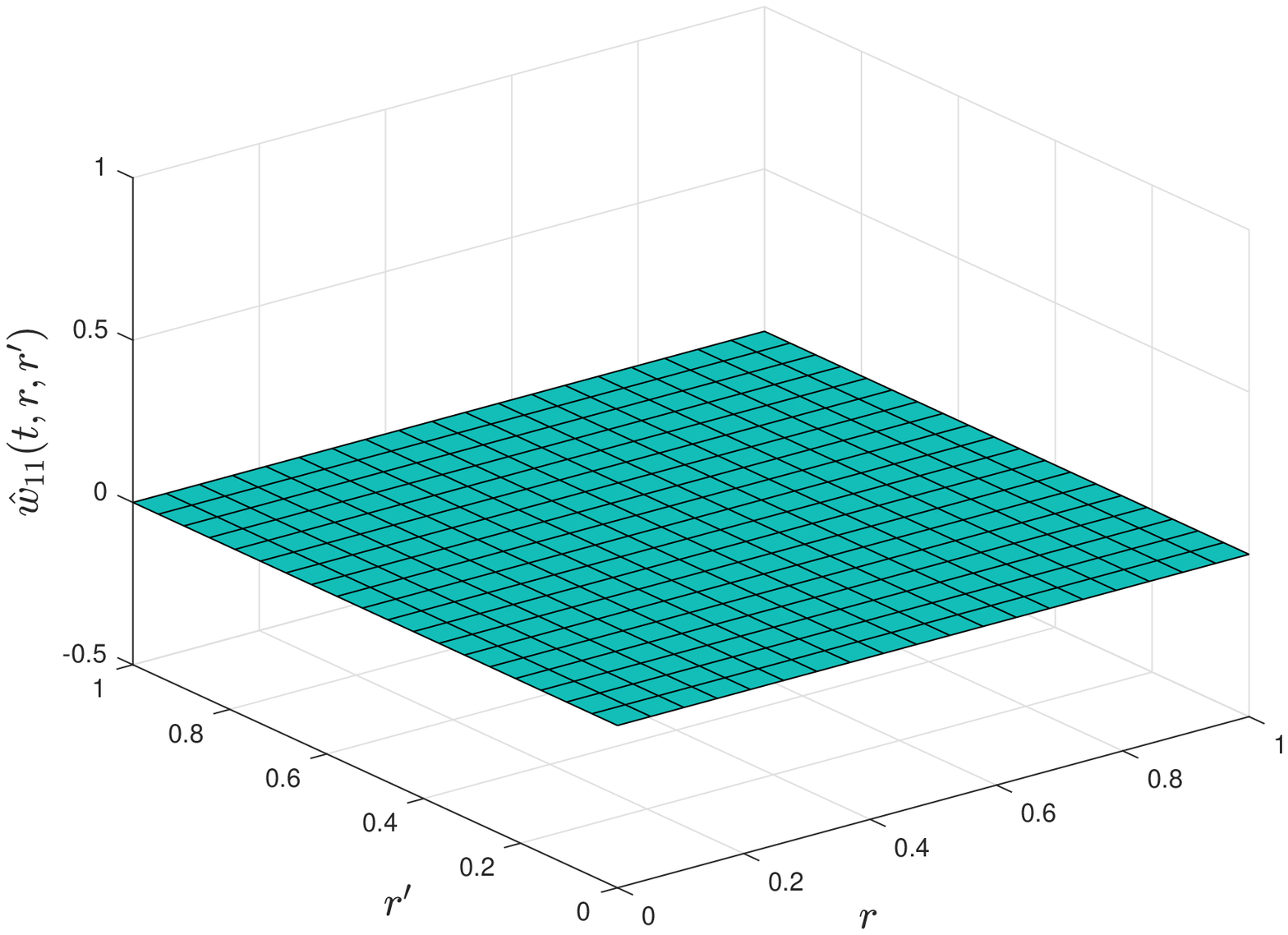}
         \caption{$t = 0$}
     \end{subfigure}
     \hfill
     \begin{subfigure}[b]{0.49\linewidth}
         \centering
         \includegraphics[width=\linewidth]{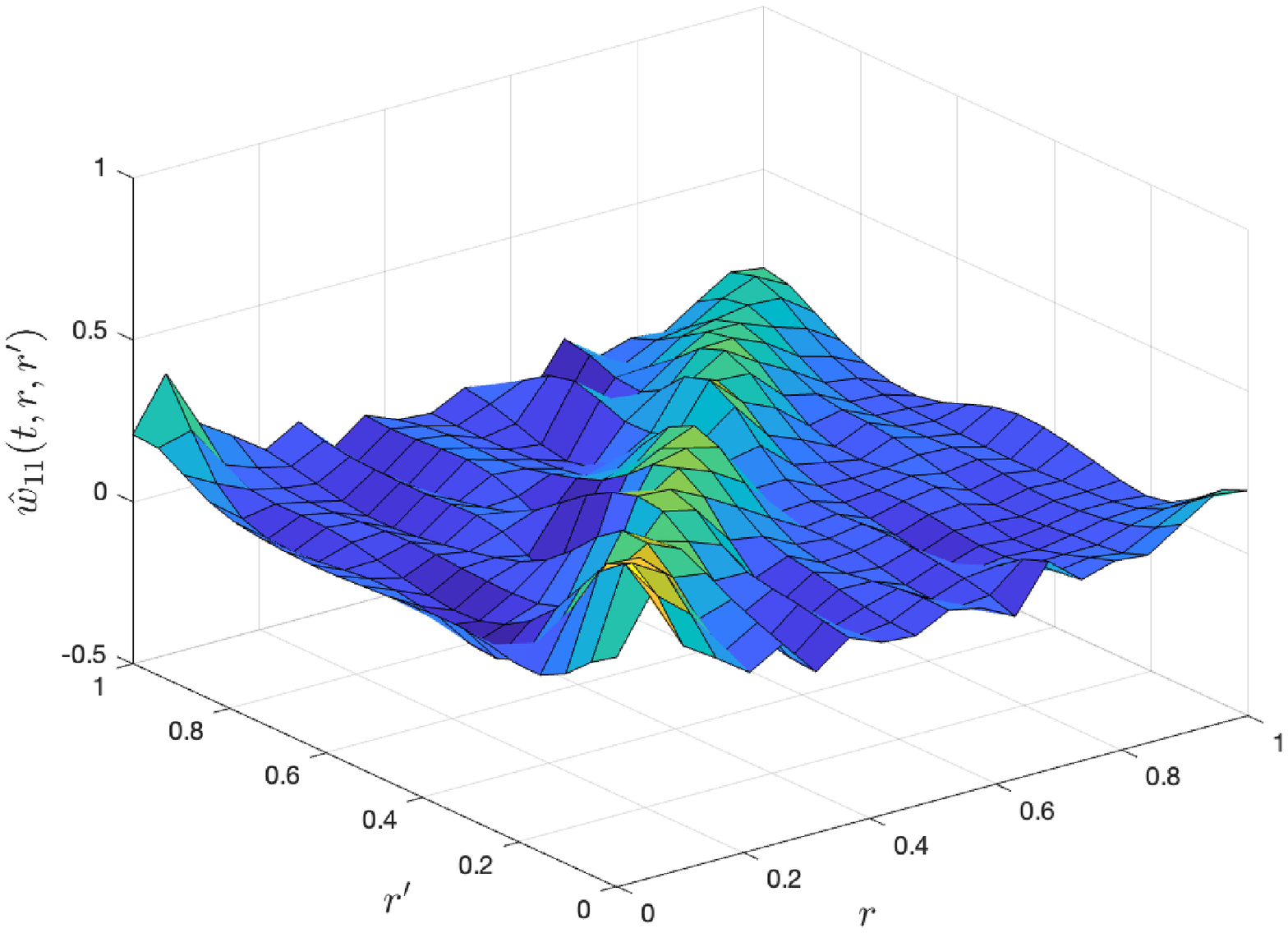}
         \caption{$t = 2$}
     \end{subfigure}
     \\
     \begin{subfigure}[b]{0.49\linewidth}
         \centering
         \includegraphics[width=\linewidth]{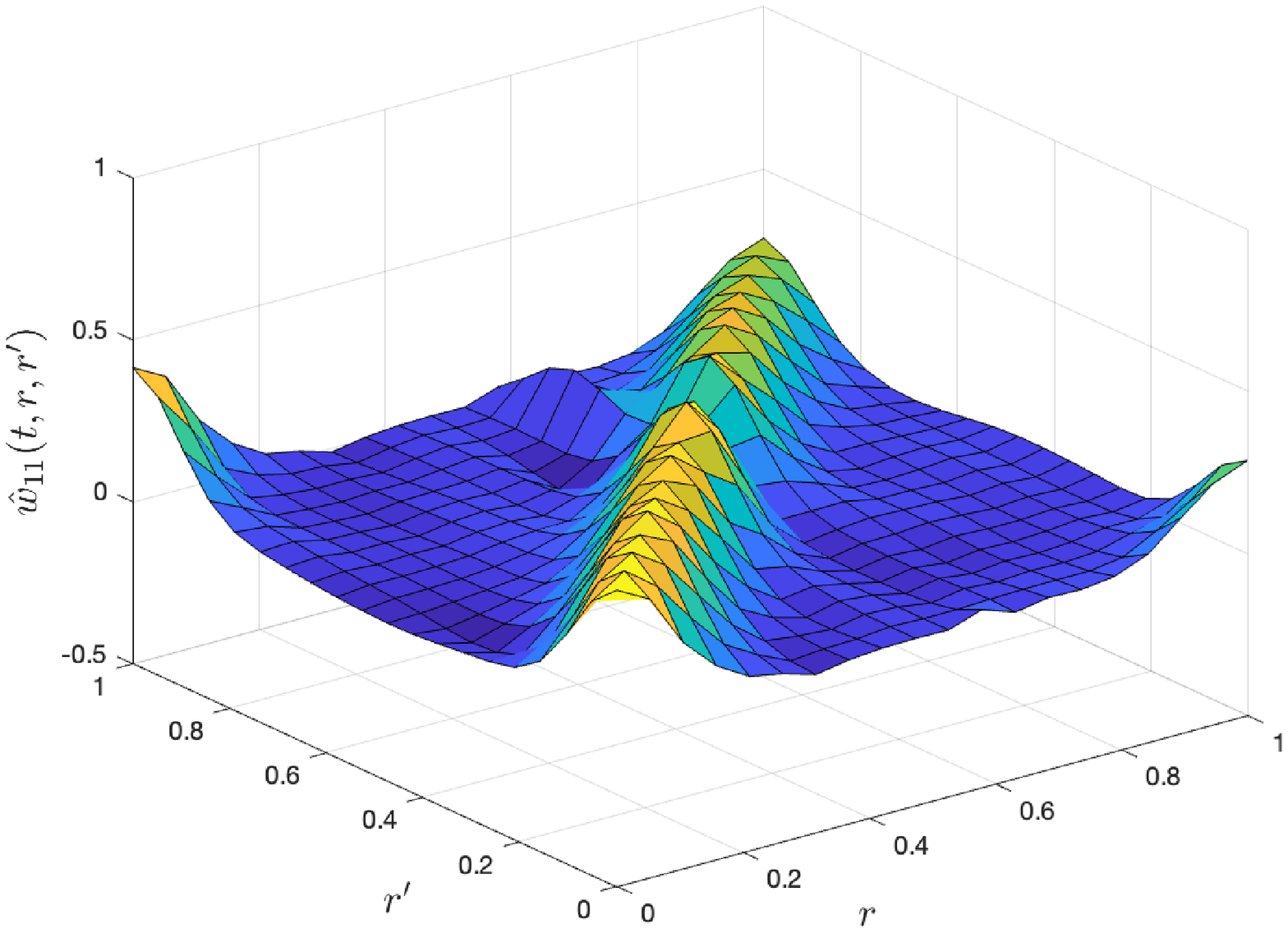}
         \caption{$t = 5$}
     \end{subfigure}
     \hfill
     \begin{subfigure}[b]{0.49\linewidth}
         \centering
         \includegraphics[width=\linewidth]{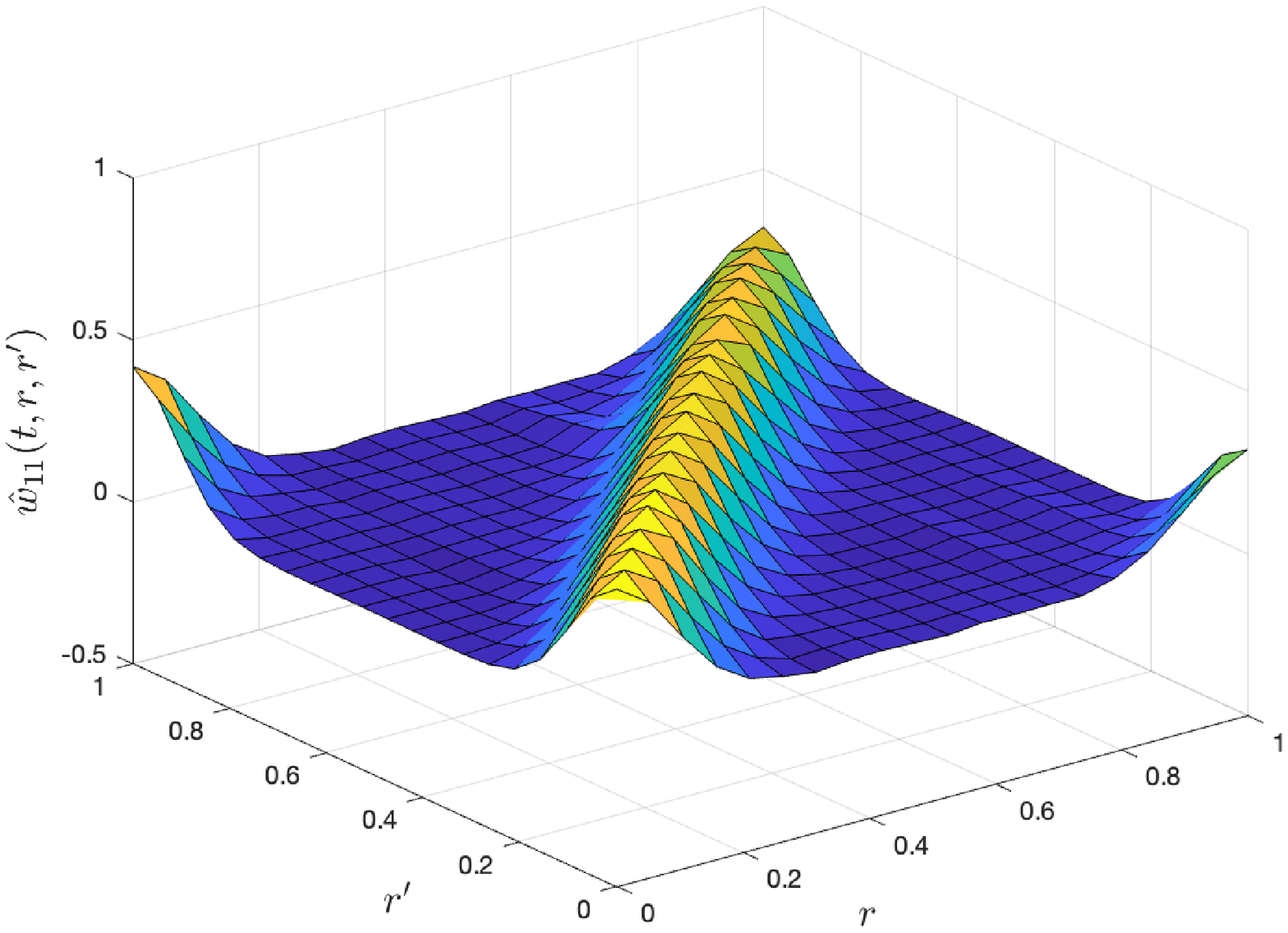}
         \caption{$t = 10$}
     \end{subfigure}
    \caption{
    Evolution of the kernel estimation $\hat w_{11}(t, r, r')$ when running the observer \eqref{eq:obs}.
    }
    \label{fig:film}
\end{figure}

\begin{figure}[ht!]
    \centering
    \includegraphics[width=0.49\linewidth]{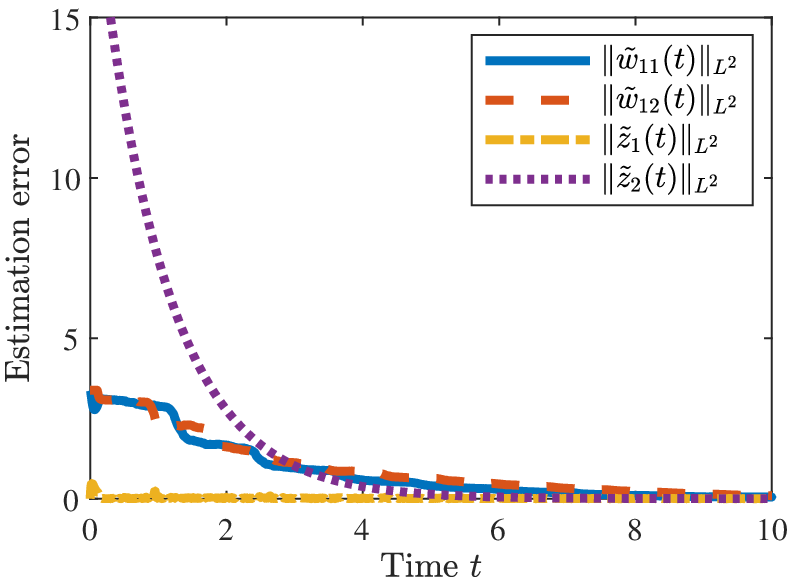}
    \caption{
    Evolution of the estimation errors 
$\|\weps_{1i}\|_{\FF_{w_{1i}}}$ and
$\|\zeps_i\|_{\FF_{z_i}}$
for $i\in\{1,2\}$
of the observer \eqref{eq:obs}.
}
\label{fig:obs}
\end{figure}

\begin{figure}[ht!]
    \centering
    \begin{subfigure}[b]{0.49\linewidth}
         \centering
         \includegraphics[width=\linewidth]{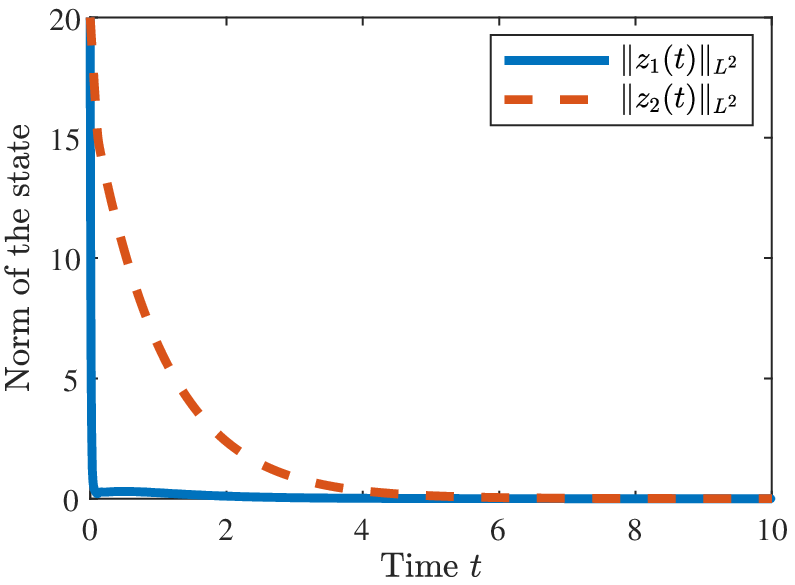}
     \end{subfigure}
     \hfill
    \begin{subfigure}[b]{0.49\linewidth}
         \centering         \includegraphics[width=\linewidth]{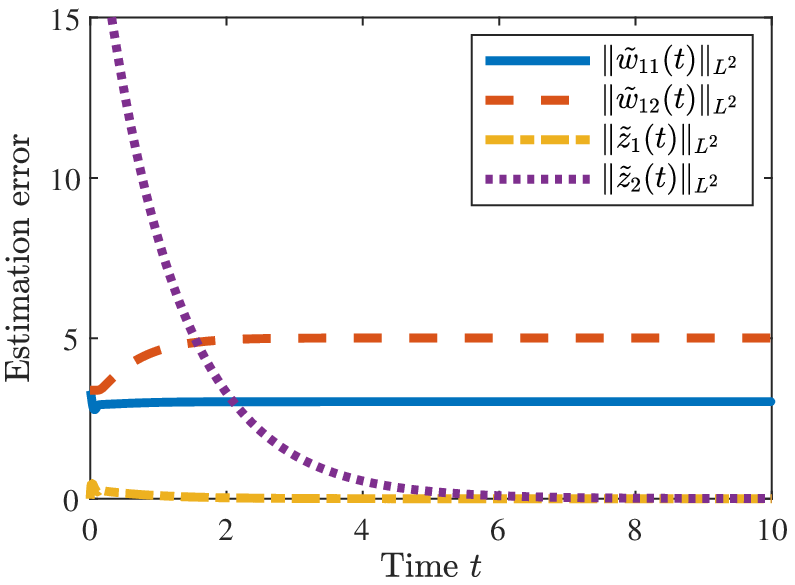}
     \end{subfigure}
     \\
     \begin{subfigure}[b]{0.49\linewidth}
         \centering
         \includegraphics[width=\linewidth]{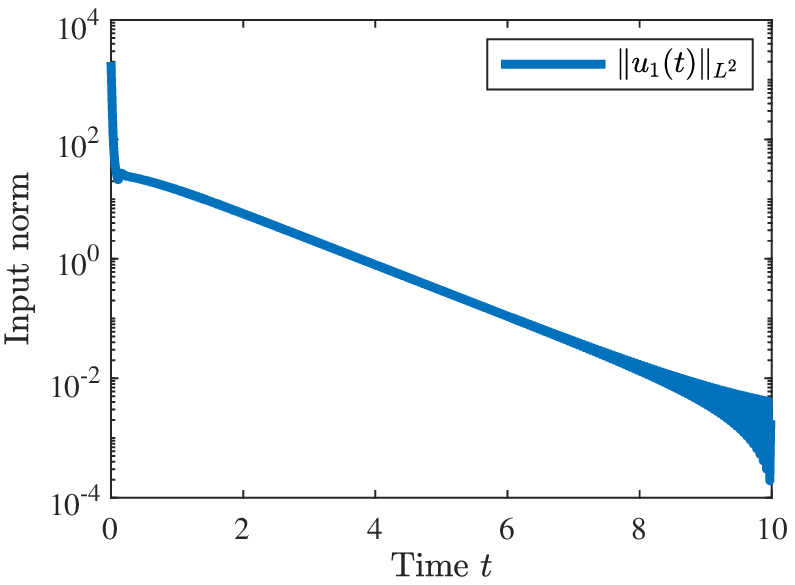}
     \end{subfigure}
    \caption{
    Evolution of the norm of the state $z_i$, of the estimation errors 
$\weps_{1i}$ and
$\zeps_i$
for $i\in\{1,2\}$, and of input $u_1$,
for the control law \eqref{eq:cont}.
}
\label{fig:stab}
\end{figure}

\begin{figure}[ht!]
    \centering
    \begin{subfigure}[b]{0.49\linewidth}
         \centering
         \includegraphics[width=\linewidth]{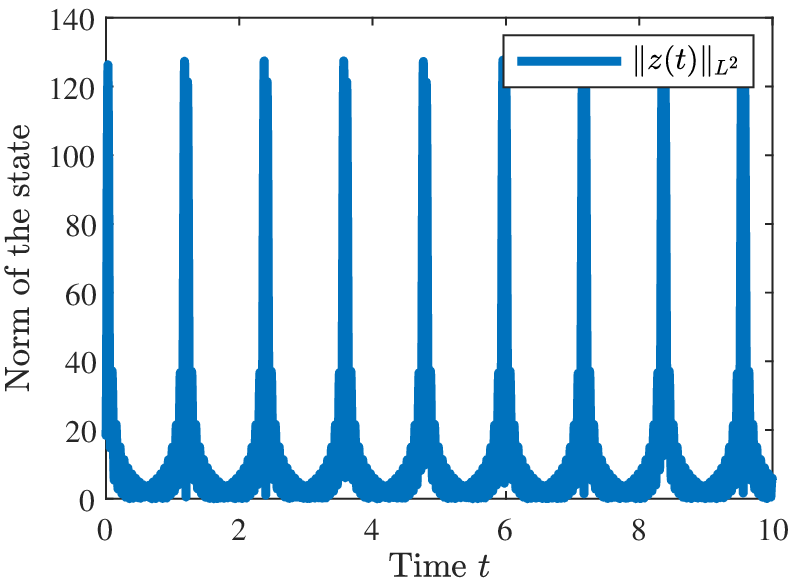}
     \end{subfigure}
     \hfill
     \begin{subfigure}[b]{0.49\linewidth}
         \centering
         \includegraphics[width=\linewidth]{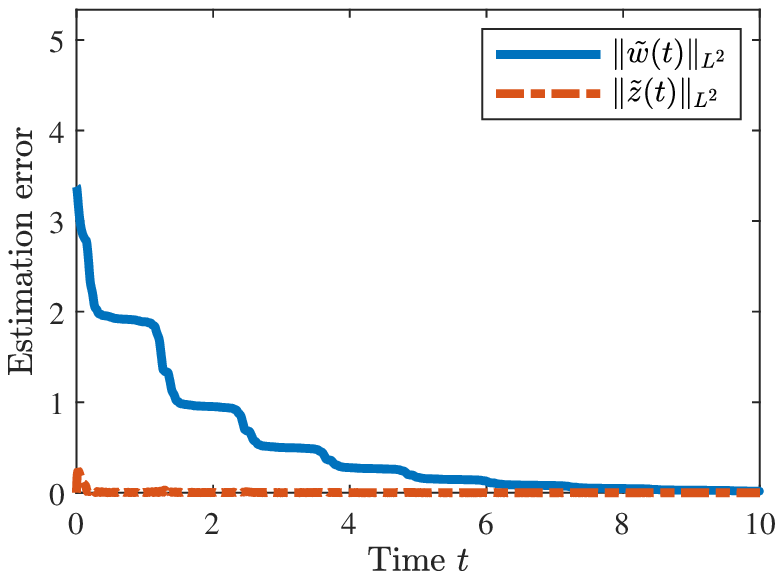}
     \end{subfigure}
    \caption{
    Evolution of the norm of the state $\|z\|_{\FF_{z}}$ and of the estimation errors 
$\|\weps\|_{\FF_{w}}$ and
$\|\zeps\|_{\FF_{z}}$
for the control law \eqref{eq:cont_sim} with $v(t, r)=100\sin(\lambda_1 tr)$.
}
\label{fig:sim}
\end{figure}

\begin{figure}[ht!]
    \centering
    \includegraphics[width=0.49\linewidth]{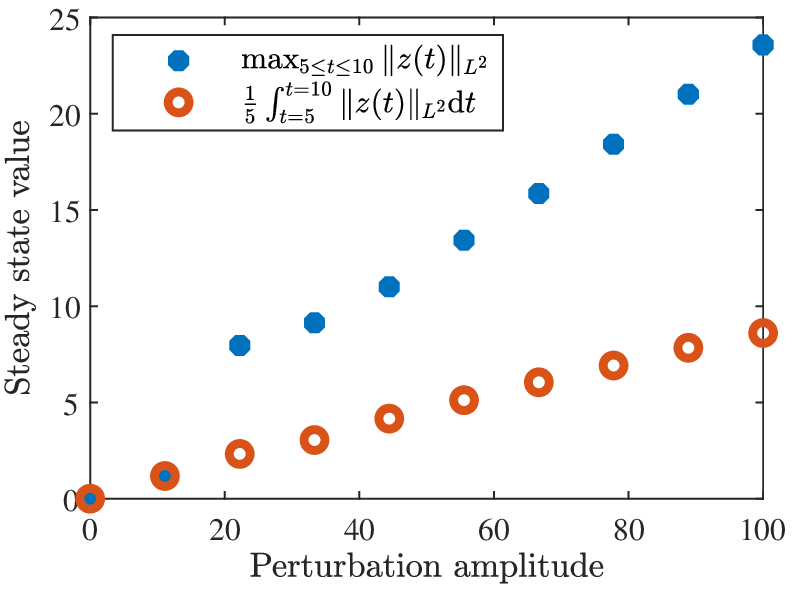}
    \caption{Steady-state behavior of $\|z_1(t)\|_{L^2}$, in terms of its maximal (blue dots) and average (red dots) values for the control law \eqref{eq:cont}, as a function of the value of the applied additive disturbance.
}
\label{fig-pert}
\end{figure}

\begin{figure}[ht!]
    \centering
    \begin{subfigure}[b]{0.49\linewidth}
         \centering
         \includegraphics[width=\linewidth]{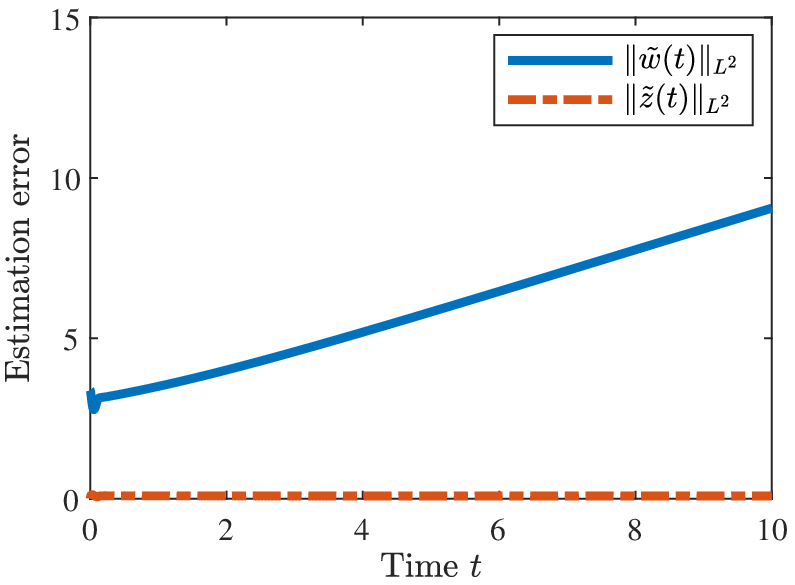}
     \end{subfigure}
     \hfill
    \begin{subfigure}[b]{0.49\linewidth}
         \centering         \includegraphics[width=\linewidth]{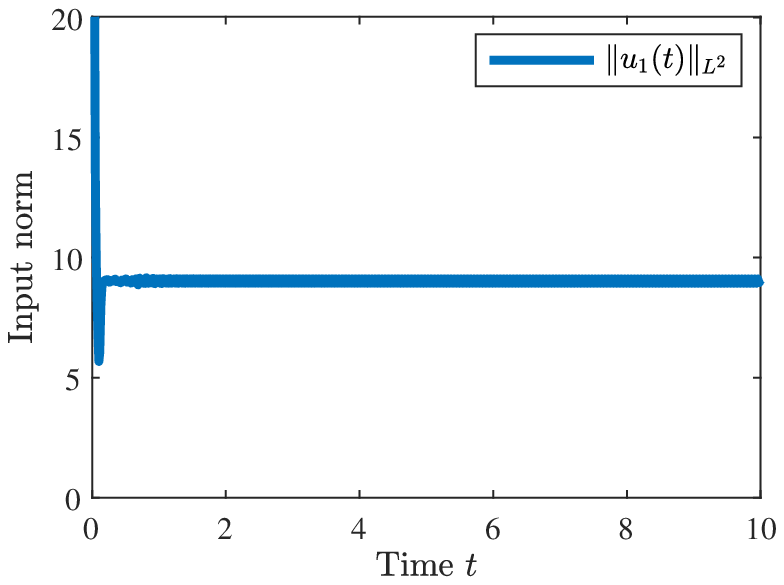}
     \end{subfigure}
    \caption{
    Evolution of the estimations errors $\weps$ and $\zeps$ (left) and of the control input $u_1$ (right)
    for \eqref{eq:cont} in the presence of an additive perturbation of amplitude $2$.
}
\label{fig-drift}
\end{figure}

\section{Conclusion}

In this paper, a new adaptive observer has been proposed to estimate online the synaptic strength between neurons from partial measurement of the neuronal activity. We proved the convergence of the observer under a persistency of excitation condition by designing a Lyapunov functional taking into account the infinite-dimensional nature of the state due to the spatial distribution of the neuronal activity and to the time-delay. We have shown that this observer can be used to design dynamic feedback laws that stabilize the system to a target point, even without persistency of excitation.
From the theoretical viewpoint, the main open question remains to extend our result on simultaneous estimation and stabilization. It currently relies on important limitations on the system, that cannot be lifted without a deeper analysis of the PE condition proposed in the paper. In particular, sufficient conditions ensuring that choosing a PE input signal guarantees PE of the state of the neural fields should be sought.

\appendix

\section{Proof of Lemma~\ref{lem:PE0}}\label{app:proof}

\begin{enumerate}[label = \textit{(\alph*)}]
\item If, for all $t\geq t_0$,
    $
    \int_t^{t+T}|\langle g(\tau), x\rangle_\FF|^2\dd\tau \geq \kappa \|Px\|_\YY^2$ for all $x\in\FF$,
    then for all $t\geq0$,
    \begin{align*}
    \int_t^{t+t_0+T}|\langle g(\tau), x\rangle_\FF|^2\dd\tau
    &\geq \int_{t+t_0}^{t+t_0+T}|\langle g(\tau), x\rangle_\FF|^2\dd\tau\\
    &\geq \kappa \|Px\|_\YY^2,\quad \forall x\in\FF,
    \end{align*}
    which shows that $g$ is PE with respect to $P$.
\item If, for all $t\geq 0$,
    $
    \int_t^{t+T}|\langle g(\tau), x\rangle_\FF|^2\dd\tau \geq \kappa \|Px\|_\YY^2$ for all $x\in\FF$,
    then for all $t\geq \dbar$,
    \begin{align*}
    \int_{t}^{t+T}|\langle g(\tau-\dbar), x\rangle_\FF|^2\dd\tau
    &= \int_{t-\dbar}^{t+T-\dbar}|\langle g(\tau), x\rangle_\FF|^2\dd\tau \\
   & \geq \kappa \|Px\|_\YY^2,\quad \forall x\in\FF,
    \end{align*}
    which shows that $t\mapsto g(t-\dbar)$ is PE with respect to $P$ by \ref{i1}.
\item If, for all $t\geq 0$,
    $
    \int_t^{t+T}|\langle g(\tau), x\rangle_\FF|^2\dd\tau \geq \kappa \|Px\|_\YY^2$ for all $x\in\FF$,
    then for all $t\geq 0$,
    \begin{align*}
    \int_{t}^{t+T}|\langle Wg(\tau), x\rangle_\FF|^2\dd\tau
    &= \int_{t}^{t+T}|\langle g(\tau), W^*x\rangle_\FF|^2\dd\tau \\
    &\geq \kappa \|PW^*x\|_\YY^2,\quad \forall x\in\FF,
    \end{align*}
    which shows that $Wg$ is PE with respect to $PW^*$ by \ref{i1}.
\item If $g$ is bounded by $M$ and for all $t\geq 0$,
    $
    \int_t^{t+T}|\langle g(\tau), x\rangle_\FF|^2\dd\tau \geq \kappa \|x\|_\FF^2$ for all $x\in\FF$,
then Cauchy-Schwartz inequality yields
$\int_t^{t+T}|\langle g(\tau), x\rangle_\FF|^2\dd\tau\leq TM^2\|x\|^2_\FF$, hence $M\geq\sqrt{\frac{\kappa}{T}}$.
Conversely, set $g(\tau) = \sqrt{\frac{2\kappa}{T}}\sum_{\ell=1}^{\dim\FF}\sin(\frac{2\ell\pi\tau}{T})e_\ell$, where $e$ is a basis of $\XX$. Then $g$ is bounded by $\sqrt{\frac{2\kappa\dim\FF}{T}}$ and has a bounded derivative. Moreover, for all $t\geq0$ and all $x=\sum_{\ell=1}^{\dim\FF}x_\ell e_\ell\in\FF$,
\begin{align*}
    &\int_t^{t+T}|\langle g(\tau), x\rangle_\FF|^2\dd\tau
    =
    \frac{2\kappa}{T}\int_t^{t+T}|\sum_{\ell=1}^{\dim\FF} x_\ell 
    \\
    &\sin\left(\frac{2\ell\pi\tau}{T}\right)|^2\dd\tau=\frac{2\kappa}{T}\sum_{\ell=1}^{\dim\FF} x_\ell^2 \int_0^{T}\sin^2\left(\frac{2\ell\pi\tau}{T}\right)\dd\tau
    \\
    &=\kappa \|x\|^2_\FF.
\end{align*}
Hence, $g$ is PE with respect to $\Id_\FF$ with constants $T$ and $\kappa$.

\item Denote by $M$ a bound of $g$. Denote by $T$ and $\kappa$ the PE constants of $g$ with respect to $\Id_\FF$. Let $\eps=\frac{\kappa}{4MT}$. Let $t_0>0$ be such that $\|\delta(t)\|_\FF\leq\eps$ for all $t\geq t_0$. Then for all $t\geq t_0$ and all $x\in\FF$,
\begin{align*}
    &\int_t^{t+T}|\langle g(\tau)+\delta(\tau), x\rangle_\FF|^2\dd\tau
\geq
\int_t^{t+T}|\langle g(\tau), x\rangle_\FF|^2\dd\tau
\\
&\quad
-2 \int_t^{t+T}\left|\langle g(\tau), x\rangle_\FF \langle \delta(\tau), x\rangle_\FF\right|\dd\tau
\\
&\geq (\kappa - 2MT\eps)\|x\|_\FF^2
\\
&\geq \frac{\kappa}{2} \|x\|_\FF^2.
\end{align*}
Hence, $g+\delta$ is PE with respect to $\Id_\FF$ by \ref{i1}.
\item This proof follows the one given in \cite[Property 4]{1393135}. We give it here for the sake of completeness. Denote by $M$ a bound of $g$ and $\dot g$. Let $z$ be a solution of $\frac{\dd z}{\dd t} = -\mu z(t) + g(t)$. By Duhamel's formula,
    \begin{align*}\|z(t)\|_\FF &\leq e^{-\mu t}\|z(0)\|_\FF + \int_0^te^{-\mu (t-\tau)}\|g(\tau)\|_\FF\dd \tau \\
    &\leq e^{-\mu t}\|z(0)\|_\FF + \frac{M}{\mu}.
    \end{align*}
Hence there exists $t_0\geq 0$ such that $\|z(t)\|_\FF\leq \frac{2M}{\mu}$ for all $t\geq t_0$.
For any $x\in \FF$, define $\phi_x:\R_+\to\R$ by $\phi_x(t) = - \langle z(t), x\rangle_\FF\langle g(t), x\rangle_\FF$. Then $\phi_x$ is continuously differentiable and $\dot \phi_x = \langle z, x\rangle_\FF\langle \mu g - \dot g, x\rangle_\FF - |\langle g, x\rangle_\FF|^2$. Hence, for all $t\geq t_0$ and all $T>0$,
\begin{align}\label{eq:phi}
    \phi_x(t+T)-\phi_x(t) =&\, \int_t^{t+T}\langle z(\tau), x\rangle_\FF\langle \mu g(\tau) - \nonumber\\
    \dot g(\tau), x\rangle_\FF\dd \tau&\,- \int_t^{t+T}|\langle g(\tau), x\rangle_\FF|^2\dd \tau.
\end{align}
Since $g$ is PE with respect to $\Id_\FF$, there exist $T, \kappa>0$ such that, for any $k\in\N$,
\begin{equation}\label{eq:k}
    \int_t^{t+kT}|\langle g(\tau), x\rangle_\FF|^2\dd\tau \geq k\kappa\|x\|_\FF^2,\quad \forall x\in \FF,\ \forall t\geq0.
\end{equation}
Moreover, by Cauchy-Schwartz inequality, $|\phi_x(t)|\leq \frac{2M^2}{\mu}\|x\|^2$. Hence $\phi_x(t+T)-\phi_x(t) \geq -\frac{4M^2}{\mu}\|x\|^2$. Choose $k$ large enough that $k\kappa > \frac{4M^2}{\mu}$. Combining \eqref{eq:phi} and \eqref{eq:k} yields, for all $t\geq t_0$ and all $x\in \FF$,
\small
\begin{align}\label{eq:1}
    \int_t^{t+kT}\langle z(\tau), x\rangle_\FF\langle \mu g(\tau) - \dot g(\tau), x\rangle_\FF\dd \tau
    \geq \left(k\kappa -\frac{4M^2}{\mu}\right)\|x\|_\FF^2.
\end{align}
\normalsize
Finally, we get by Cauchy-Schwartz inequality that
\begin{align}
\int_t^{t+kT}\langle &z(\tau), x\rangle_\FF\langle \mu g(\tau) - \dot g(\tau), x\rangle_\FF\dd \tau
\leq (\mu+1) M\nonumber\\
&\|x\|_\FF\int_t^{t+kT}\langle z(\tau), x\rangle_\FF\dd \tau\label{eq:2}
\end{align}
and
\begin{align}\label{eq:3}
\int_t^{t+kT} \langle z(\tau), x\rangle_\FF\dd \tau \leq \sqrt{kT}\sqrt{\int_t^{t+kT} |\langle z(\tau), x\rangle_\FF|^2\dd \tau}.
\end{align}
Combining \eqref{eq:1}-\eqref{eq:2}-\eqref{eq:3}, we obtain that for all $x\in \FF$ and all $t\geq t_0$,
\begin{align*}
\int_t^{t+kT} |\langle z(\tau), x\rangle_\FF|^2\dd \tau \geq 
\frac{(k\kappa -\frac{4M^2}{\mu})^2}{kT(\mu +1)^2M^2}\|x\|_\FF^2
\end{align*}
which implies that $z$ is persistently exciting with respect to $\Id_\FF$ by Lemma~\ref{lem:PE0} and \ref{i1}.
\end{enumerate}

\bibliographystyle{abbrv}
\bibliography{references}

\end{document}